\documentclass[12pt]{amsart}
\usepackage[utf8]{inputenc}
\usepackage{amsmath, amsthm, amssymb, amsfonts}
\usepackage{mathrsfs}
\usepackage{hyperref}
\usepackage{enumitem}
\usepackage{geometry}
\geometry{margin=1in}
\usepackage{xcolor}
\usepackage{tikz-cd}
\newcommand{\bb}{\mathbb}
\newcommand{\al}{\mathcal}

\newcommand{\Lk}{{\rm Lk}}
\newcommand{\diam}{{\rm diam}}

\newtheorem{thm}{Theorem}[section]
\newtheorem{lem}[thm]{Lemma}
\newtheorem{prop}[thm]{Proposition}
\newtheorem{defn}[thm]{Definition}
\newtheorem{cor}[thm]{Corollary}
\newtheorem{example}[thm]{Example}
\newtheorem{rem}[thm]{Remark}

\newtheorem{Observation}[thm]{Observation}

\newcommand{\vol}{{\rm vol}}
\newcommand{\dist}{{\rm dist}}

\newcommand{\length}{{\rm length}}
\newcommand{\Lip}{{\rm Lip}}

\title{Bi-H\"older invariants in o-minimal structures}

\author[An V. Q. Huynh]{An V. Q. Huynh}
\address{FPT University, Danang, Vietnam}
\email{huynhvanquocan@gmail.com}

\author[Minh B. Nguyen]{Minh B. Nguyen}
\address{Hanoi National University of Education, Vietnam}
\email{ngbinhminh.dsmun@gmail.com}

\author[Nhan X. V. Nguyen]{Nhan X. V. Nguyen}
\address{FPT University, Danang, Vietnam}
\email{nguyenxuanvietnhan@gmail.com}

\author[Minh Q. Vu]{Minh Q. Vu}
\address{University of Science and Technology of Hanoi, Vietnam}
\email{vuq013588@gmail.com}

 
\subjclass[2010]{ 32S50, 14B05, 32C05, 32C15}

\begin{document}

\maketitle

\begin{abstract}  We prove that for any two  definable germs in a polynomially bounded o-minimal structure, there exists a critical threshold $\alpha_0 \in (0, 1)$ such that if these germs are bi-$\alpha$-H\"older equivalent for some $\alpha \geq \alpha_0$, then they  satisfy the following:
\begin{itemize}[label=$\circ$]
   \item The Lipschitz normal embedding (LNE) property is preserved; that is, if one germ is LNE then  so is the other;
\item Their tangent cones have the same dimension;
\item The links of their tangent cones have isomorphic homotopy groups.
\end{itemize}
As an application, we give a simple proof that a complex analytic germ which is bi-$\alpha$-H\"older homeomorphic to the germ of a Euclidean space for some $\alpha$ sufficiently close to $1$ must be smooth.
This provides a slightly
stronger version of Sampaio’s smoothness theorem, in which the germs are assumed to be bi-$\alpha$-H\"older homeomorphic for every $\alpha \in (0,1)$.
\end{abstract}
\section{Introduction}

A crucial problem in Singularity Theory is to determine which properties of a singular germ are preserved under various notions of metric equivalence. A central object of study is the tangent cone, which serves as a first-order approximation of the germ at the origin. Numerous fundamental invariants associated with tangent cones have been shown to be preserved under bi-Lipschitz homeomorphisms. For instance, Koike and Paunescu~\cite{Koike2009} proved that the dimension of the tangent cone of a subanalytic germ is a bi-Lipschitz invariant. This result was later extended to definable sets in o-minimal structures by Koike, Loi, Paunescu, and Shiota~\cite{Koike2013}. A major advance was achieved by Sampaio~\cite{Sampaio2016}, who showed that the tangent cone of a subanalytic germ itself is invariant under bi-Lipschitz homeomorphisms (with respect to the outer metric). In a related direction, Fernandes and Sampaio~\cite{FS2019} established that Lipschitz normal embedding (LNE) in an o-minimal structure is inherited by the tangent cone. More recently, the third author~\cite{Nhan2024}  provided a necessary and sufficient condition for two definable germs to have bi-Lipschitz homeomorphic tangent cones.

Nevertheless, the bi-Lipschitz condition (corresponding to $\alpha=1$ in H\"older terminology) is often too restrictive for many purposes. A natural relaxation is to consider bi-$\alpha$-H\"older equivalence for $\alpha \in (0,1)$, which provides metric equivalences interpolating between the topological category and the bi-Lipschitz one.

Recent work reveals that this intermediate has both rigid and flexible aspects. For complex analytic plane curves, Fernandes, Sampaio, and Silva~\cite{FEJ2018} proved that  strongly bi-$\alpha$-H\"older equivalence (being bi-$\alpha$-H\"older homeomorphic for all $\alpha\in (0,1)$) determines  the characteristic exponents of each branch as well as their pairwise intersection multiplicities. In the real analytic setting, by contrast, Fernandes et al. ~\cite{FSSS2024} showed that analytic function germs admit moduli under bi-$\alpha$-H\"older right equivalence for any fixed $\alpha \in (0,1)$, highlighting the flexibility.

More recently, Sampaio~\cite{Sampaio2025} established a interesting rigidity phenomenon in the complex category: if a complex analytic germ $(X,0) \subset \mathbb{C}^n$ is strongly bi-$\alpha$-H\"older equivalent to the germ $(\mathbb{R}^k,0)$ of a Euclidean plane, then $(X,0)$ must be smooth.

Motivated by these developments, we consider the following natural questions, listed in decreasing order of difficulty. Let $(X, 0)$ and $(Y, 0)$ be definable germs in an o-minimal structure and suppose they are strongly bi-$\alpha$-H\"older equivalent:

\begin{enumerate}
\item Are $(X,0)$ and $(Y,0)$  bi-Lipschitz equivalent?
\item Are their tangent cones $C_0(X)$ and $C_0(Y)$ bi-Lipschitz equivalent?
\item Are $C_0(X)$ and $C_0(Y)$ topologically equivalent?
\item Do $C_0(X)$ and $C_0(Y)$ have the same dimension?
\item If $(X,0)$ is Lipschitz normally embedded, must $(Y,0)$ also be Lipschitz normally embedded?
\end{enumerate}

In general, the answer to the above questions is negative. Counterexamples are presented in Section~\ref{section6}; notably, all such examples live in non-polynomially bounded o-minimal structures. This naturally leads to the question of whether positive answers hold in the polynomially bounded case, where the Łojasiewicz inequality imposes strong control on asymptotic distortion.

Our main results provide affirmative answers to Questions~(4) and~(5), as well as a partial affirmative answer to Question~(3), under the assumption that the underlying o-minimal structure is polynomially bounded. More precisely, we prove that given definable germs  $(X,0)$ and $(Y,0)$ in a polynomially bounded o-minimal structure, there exists a threshold $\alpha_0 \in (0,1)$ (depending only on $X$ and $Y$) such that if $(X,0)$ and $(Y,0)$ are bi-$\alpha$-H\"older equivalent for some $\alpha \ge \alpha_0$, then
\begin{enumerate}[label=(\roman*)]
\item if $(X,0)$ is Lipschitz normally embedded, then so is $(Y,0)$ \quad (Theorem~\ref{thm_lne});
\item $\dim C_0(X) = \dim C_0(Y)$ \quad (Theorem~\ref{thm_dim2});
\item the links of the tangent cones $C_0(X)$ and $C_0(Y)$ have isomorphic homotopy groups \quad (Theorem~\ref{thm_homotopy2}).
\end{enumerate}

Statement (ii) significantly strengthens the classical bi-Lipschitz dimension invariance result of Koike and Paunescu~\cite{Koike2009} by replacing bi-Lipschitz equivalence with bi-$\alpha$-H\"older equivalence for $\alpha$ sufficiently close to $1$.

As an immediate application of (i) and (iii), we obtain a simple proof showing that if a complex analytic 
germ $(X,0)$ is bi-$\alpha$-H\"older equivalent to the germ of a $k$-plane for some $\alpha$ close 
to~$1$, then $(X,0)$ must be smooth. This yields a strengthening of Sampaio’s smoothness theorem \cite{Sampaio2025}.

\subsection*{Notation}

Throughout this paper we adopt the following notation:

\begin{itemize}
\item $B(x,r)$ and $S(x,r)$ denote the open ball and the sphere of radius $r$ centered at $x \in \mathbb{R}^n$, respectively. When the center is the origin and the radius is $1$, we simply write $S^{n-1} := S(0,1)$.

\item Let $X \subset \mathbb{R}^n$. We denote by $\diam(X)$ the diameter of $X$.
For a point $x \in \mathbb{R}^n$, we denote by $\dist(x,X)$ the Euclidean distance from $x$ to $X$.

\item For a definable set $X \subset \mathbb{R}^n$, $\overline{X}$ denotes its Euclidean closure. A \emph{closed definable germ at $x_0$}  means the germ at $x_0$ of a closed definable set. If $X$ is a definable germ at $x_0 \in \mathbb{R}^n$, the \emph{link of $X$ at $x_0$} is the set 
  \[
  \Lk_r(X,x_0) := X \cap S(x_0,r).
  \] for $r$ sufficiently small. 
  The link is well-defined up to definable homeomorphism. When the choice of radius is irrelevant, we write $\Lk(X,x_0)$.

\item For a definable set $K \subset \mathbb{R}^n$, we denote its homotopy groups by 
$\pi_k(K)$ for $k \ge 0$.  Note that $\pi_0(K)$ is not a group, but simply the set of path-connected components of $K$. When no ambiguity arises, we write $\pi_*(K)$ for the collection of all homotopy groups. We say that two sets have the same homotopy type if their corresponding homotopy groups are isomorphic.

\item $\vol_k$ ($k = 0,\dots,n$) denotes the $k$-dimensional Lebesgue (Hausdorff) measure on $\mathbb{R}^n$. Note that every definable set is Lebesgue measurable.

\item For functions $f,g : U \to [0,\infty)$ defined on a set $U \subset \mathbb{R}^n$, we write
  \[
  f \lesssim g \quad \text{if} \quad \exists\, C > 0 \quad \text{such that} \quad f(x) \le C\, g(x) \quad \forall\, x \in U.
  \]
  The constant $C$ is called \emph{a constant for the relation $\lesssim$}. We write $f \sim g$ if $f \lesssim g$ and $g \lesssim f$.
\end{itemize}

\subsection*{Acknowledgements}

Part of this work was carried out during the Research Experiences for Undergraduates (REU) programme organised by the Vietnam Institute for Advanced Study in Mathematics (VIASM). The authors gratefully acknowledge VIASM’s warm hospitality and support. The research of the third author is funded by the Vietnam National Foundation for Science and Technology Development (NAFOSTED) under grant number 101.04-2024.09.

We thank an anonymous referee for pointing out an error in the proof of Theorem 3.1 in an earlier version of this paper.

\section{Preliminaries}\label{section2}
\subsection{Bi-$\alpha$-H\"older equivalence}

\begin{defn}[$\alpha$-H\"older map]\rm 
Let $(X, d_X)$ and $(Y, d_Y)$ be metric spaces, and let $\alpha \in (0,1]$. A map $f : X \to Y$ is called \emph{$\alpha$-H\"older} if there exists a constant $L > 0$ such that
\[
d_Y\big(f(x), f(x')\big) \le L \, d_X(x, x')^{\alpha}
\quad \text{for all } x, x' \in X.
\]
The constant $L$ is referred to as a \emph{H\"older constant} of $f$.
\end{defn}

In this paper, we work with subsets of $\bb R^n$ equipped with the metric induced by the Euclidean metric on \( \mathbb{R}^n \).

\begin{defn}[bi-$\alpha$-H\"older equivalence]\rm 
Let $0<\alpha\le 1$. Two germs $(X,x_0)\subset\mathbb{R}^n $ and $(Y,y_0)\subset\mathbb{R}^m$ are said to be \textit{bi-$\alpha$-H\"older equivalent}  (or \textit{bi-$\alpha$-H\"older homeomorphic}) if there exists a homeomorphism $\varphi:(X,x_0)\to(Y,y_0)$ and a constant $L\ge1$ such that
$$
\frac{1}{L}\|x_1-x_2\|^{1/\alpha}\leq \|\varphi(x_1)-\varphi(x_2)\|\leq L\|x_1-x_2\|^{\alpha}
$$
for all $x_1,x_2$ sufficiently close to $x_0$. 
\end{defn}

\begin{defn}[embedded bi-$\alpha$-H\"older equivalence]\rm 
Let $0<\alpha\le 1$. Two germs $(X,x_0)$ and $(Y,y_0)$ in $\mathbb{R}^n$ are said to be \textit{embedded bi-$\alpha$-H\"older equivalent}  (or \textit{embedded bi-$\alpha$-H\"older homeomorphic}) if there exist neighborhoods $U$ of $x_0$  and $V$ of $y_0$ in $\bb R^n$ and a homeomorphism $\varphi:(U,x_0)\to(V,y_0)$ and a constant $L\ge1$ such that $\varphi(U\cap X) = V \cap Y$ and 
$$
\frac{1}{L}\|x_1-x_2\|^{1/\alpha}\leq \|\varphi(x_1)-\varphi(x_2)\|\le L\|x_1-x_2\|^{\alpha}
$$
for all $x_1,x_2$ in $U$. 
\end{defn}

\begin{rem}\rm 
\begin{itemize}
    \item When $\alpha = 1$, these notions coincide with the standard bi-Lipschitz equivalences. 
For $\alpha < 1$, they provide a metric notation lying between topological and bi-Lipschitz equivalence.
\item If two germs $(X, x_0)$ and $(Y, y_0)$ are bi-$\alpha$-H\"older equivalent, then they  are automatically  bi-$\beta$-H\"older equivalent for every $0<\beta\leq \alpha$.
\end{itemize}

\end{rem}

\subsection{O-minimal structures}

In this section, we recall the definition of an o-minimal structure on $(\mathbb{R}, +, \cdot)$
and summarize some basic results that will be used later. 
We refer the reader to \cite{Coste, Dries, Loi1} for detailed expositions on the theory of o-minimal structures. 

\begin{defn}\rm 
A \emph{structure} on $(\mathbb{R}, +, \cdot)$ is a sequence 
$\mathcal{D} = (\mathcal{D}_n)_{n \in \mathbb{N}}$ such that, for each $n \ge 1$, the following properties hold:
\begin{enumerate}
\item[(1)] $\mathcal{D}_n$ is a Boolean algebra of subsets of $\mathbb{R}^n$;
\item[(2)] If $A \in \mathcal{D}_n$ then $A \times \bb R$ and $\bb R\times A$ are in $\mathcal{D}_{n+1}$;
\item[(3)] For every polynomial $P \in \mathbb{R}[X_1, \ldots, X_n]$, the zero set 
$\{x \in \mathbb{R}^n : P(x) = 0\}$ belongs to $\mathcal{D}_n$.
\item[(4)] If $A \in \mathcal{D}_n$, then the projection 
$\pi(A) \in \mathcal{D}_{n-1}$, where $\pi : \mathbb{R}^n \to \mathbb{R}^{n-1}$ 
is the projection onto the first $(n-1)$ coordinates.
\end{enumerate}
Such a structure $\mathcal{D}$ is said to be \emph{o-minimal} if, in addition,
\begin{enumerate}
\item[(5)] Every set $A \in \mathcal{D}_1$ is a finite union of points and open intervals.
\end{enumerate}
A set in $\al D_n$ for some $n$ is called a \emph{definable} set. A map $f: A \to \bb R^m$ is called definable if its graph is definable. 
\end{defn}

\medskip

\begin{defn}\rm
An o-minimal structure $\mathcal{D}$ is said to be \emph{polynomially bounded} 
if every definable function $f : \mathbb{R} \to \mathbb{R}$ satisfies 
$|f(x)| \le x^N$ for some integer $N$ and all sufficiently large $x$. 
\end{defn}

It has been known by Miller’s Dichotomy Theorem \cite{Miller1993}, every o-minimal structure on $(\mathbb{R}, +, \cdot)$ is either polynomially bounded or exponential (i.e., it contains the graph of $\exp(x)$). The class of semialgebraic sets ($\bb R_{\rm{alg}}$)  and the class of globally subanalytic sets ($\bb R_{\rm{an}}$) are typical examples of polynomially bounded o-minimal structures. On the other hand, the structure generated by the exponential function ($\bb R_{\rm exp})$ (see \cite{Wilkie96}) or the one generated by the exponential function and restricted analytic functions ($\bb R_{\rm{an, exp}}$) are examples of non-polynomially bounded o-minimal structures (see \cite{DM1994})

\medskip

\begin{defn}\rm 
Let $p : X \to \mathbb{R}^k$ be a continuous definable map, where $X \subset \mathbb{R}^n$ is definable and $n \ge k$.
We say that $p$ is \emph{definably trivial over} a definable set $Y \subset \mathbb{R}^k$
if there exist a definable set $F$ and a definable homeomorphism
\[
h : p^{-1}(Y) \longrightarrow F \times Y
\]
such that $p_1 \circ h = p|_{p^{-1}(Y)}$, where $p_1 : F \times Y \to Y$ is the projection onto the second factor.
If $Z \subset X$ is a definable subset, we say that $h$ is \emph{compatible with} $Z$
if there exists a definable subset $G \subset F$ such that 
$h(Z \cap p^{-1}(Y)) = G \times Y$. The homeomorphism $h$  is called a \textit{trivialization} of $p$ over $Y$.
\end{defn}

\begin{thm}[Hardt’s Triviality Theorem {\cite{Coste}}]
Let $p : X \subset \mathbb{R}^n \to \mathbb{R}^k$ be a continuous definable map,
and let $Z_1, \ldots, Z_m \subset X$ be definable subsets.
Then there exists a finite definable partition $\{Y_i\}_{i=1}^N$ of $p(X)$ such that for each $i$, the restriction $p|_{p^{-1}(Y_i)}$ is definably trivial over $Y_i$, and is compatible with all  $Z_1, \ldots, Z_m$.
\end{thm}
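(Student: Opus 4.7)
The plan is to combine a standard graph trick with the cell decomposition theorem, and then to assemble a trivialization cell by cell. First, I would reduce to the case where $p$ is a projection. Replacing $X$ by its graph $\Gamma_p=\{(x,p(x)):x\in X\}\subset\mathbb{R}^{n}\times\mathbb{R}^{k}$ and each $Z_j$ by its image under the definable homeomorphism $x\mapsto(x,p(x))$, the map $p$ becomes the projection $\pi:\mathbb{R}^{n+k}\to\mathbb{R}^{k}$ onto the last $k$ coordinates. Hence I may assume from the outset that $p=\pi$ is such a projection. This reduction is essentially cosmetic but it puts us in the shape where cylindrical cell decomposition applies directly.

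Second, I would apply the o-minimal cell decomposition theorem to produce a finite partition of $\mathbb{R}^{n+k}$ into cells compatible with $X$ and with all the $Z_j$, and cylindrical over $\mathbb{R}^{k}$. The projections of these cells form a finite cell decomposition of $p(X)$, which gives the desired partition $\{Y_i\}_{i=1}^{N}$ of $p(X)$. By construction, for each $i$, the cells of the ambient decomposition lying strictly above $Y_i$ are all cylindrical in the fiber directions.

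Third, for each base cell $Y_i$, I would construct the trivialization $h_i:p^{-1}(Y_i)\cap X\to F_i\times Y_i$ explicitly using this cylindrical structure. Above $Y_i$, each cell is either the graph of a continuous definable function $\xi_j(y)$ on $Y_i$ (with values in some fiber coordinate) or an open band $\{\xi_j(y)<t<\xi_{j+1}(y)\}$ between two consecutive graphs. Fixing a base point $y_0\in Y_i$, I would define the fiberwise map by piecewise linear interpolation in the last fiber coordinate: a point on the $j$-th graph over $y$ is sent to the corresponding point on the $j$-th graph over $y_0$, and points in each open band are interpolated linearly between the neighboring graphs. Iterating this construction over the remaining fiber coordinates produces $h_i$. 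Since the underlying partition is compatible with $X$ and each $Z_j$, the resulting homeomorphism automatically respects them, so the compatibility clause in the theorem is inherited from cell decomposition.

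The main obstacle is twofold. First, the fibers may have dimension greater than one, which forces an induction on the relative dimension $n+k-k=n$ of the projection: one needs to trivialize one fiber coordinate at a time, using cell decompositions adapted to the intermediate projections $\mathbb{R}^{n+k}\to\mathbb{R}^{k+1}\to\cdots\to\mathbb{R}^{k}$, and glue the resulting one-variable straightenings. Second, some graphs $\xi_j(y)$ may tend to $\pm\infty$ as $y$ varies in $Y_i$; this is remedied by first composing with a definable homeomorphism of $\mathbb{R}$ onto a bounded open interval, so that all relevant graphs remain bounded and the linear interpolation is well defined. Once these two technical issues are handled, the continuity and definability of $h_i$ follow directly from the continuity and definability of the graph functions $\xi_j$, completing the proof.
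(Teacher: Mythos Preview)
The paper does not prove this theorem; it is quoted from Coste's lecture notes and used as a black box. Your outline is precisely the standard argument one finds there (and in van den Dries): pass to the graph to make $p$ a coordinate projection, take a cylindrical cell decomposition of $\mathbb{R}^{n+k}$ adapted to $X$ and the $Z_j$, let the $Y_i$ be the induced cells in the base, and over each $Y_i$ straighten the fiber coordinates one at a time by affine interpolation between consecutive graph functions, after compactifying $\mathbb{R}$ to handle unbounded bands. So there is nothing in the paper to compare against; your plan matches the textbook proof, with the usual caveat that in a full write-up the induction on the number of fiber coordinates and the continuity at cell boundaries need to be spelled out rather than only alluded to.
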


\begin{Observation}\label{Observation1}\rm
\begin{enumerate}
    \item For any $y, y' \in Y_i$, the fibers $p^{-1}(y)$ and $p^{-1}(y')$ are definably homeomorphic.
    Moreover, one may take the fiber $F$ in the trivialization to be $p^{-1}(y)$ for any fixed $y \in Y_i$.

    \item If $k = 1$ and $Y_i = (a, b)$, then for every $\varepsilon_1 < \varepsilon_2$ in $(a, b)$, 
    the trivialization $h$ induces  strong deformation retracts from $p^{-1}([c, \varepsilon_2])$ onto 
    $p^{-1}([c, \varepsilon_1])$ for every $c \leq  \varepsilon_1$ and from $p^{-1}([\varepsilon_1, d])$ onto $p^{-1}([\varepsilon_2, d])$ for every $d \geq \varepsilon_2$. To see this, begin with the strong deformation retract
    $\Phi : [0,1] \times [c, \varepsilon_2] \to [c, \varepsilon_2]$ from $[c, \varepsilon_2]$ onto 
    $[c, \varepsilon_1]$ defined by
    \[
    \Phi_s(t) = \Phi(s,t) :=
    \begin{cases}
        t, & \text{if } t \le \varepsilon_1, \\[4pt]
        (1-s)t + s\varepsilon_1, & \text{if } t > \varepsilon_1 .
    \end{cases}
    \]
    Then the map 
    \[
    H : [0,1] \times \rho^{-1}([c, \varepsilon_2]) \to \rho^{-1}([c, \varepsilon_2])
    \]
    defined by
    \[
    H(s, x, t) :=
    \begin{cases}
        (x, t), & \text{if } t \le \varepsilon_1, \\[4pt]
        h^{-1}(h(x, t),\, \Phi_s(t)), & \text{if } t > \varepsilon_1 ,
    \end{cases}
    \]
    is a strong deformation retract from $\rho^{-1}([c, \varepsilon_2])$ onto 
    $\rho^{-1}([c, \varepsilon_1])$.
\end{enumerate}
By similar arguments, one can construct a strong deformation retract from  $p^{-1}([\varepsilon_1, d])$ onto $p^{-1}([\varepsilon_2, d])$ for every $d \geq \varepsilon_2$.
\end{Observation}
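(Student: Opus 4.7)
For Part (1), the plan is to read off the conclusion directly from the definition of trivialization. A trivialization $h : p^{-1}(Y_i) \to F \times Y_i$ satisfies $p_1 \circ h = p|_{p^{-1}(Y_i)}$, so $h$ carries the fiber $p^{-1}(y)$ bijectively onto $F \times \{y\}$, and the latter is canonically definably homeomorphic to $F$ via projection onto the first factor. Composing these identifications yields, for every pair $y, y' \in Y_i$, a definable homeomorphism $p^{-1}(y) \to p^{-1}(y')$. Choosing a basepoint $y_0 \in Y_i$ and setting $F := p^{-1}(y_0)$ realizes this whole family through a single trivialization, which gives the second assertion.

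For Part (2), the strategy is to lift a deformation retract on the interval base to one on the total space via the trivialization. First, I write down the retract $\Phi_s : [c, \varepsilon_2] \to [c, \varepsilon_2]$ onto $[c, \varepsilon_1]$ that fixes $[c, \varepsilon_1]$ pointwise and, for $t \in (\varepsilon_1, \varepsilon_2]$, linearly drags $t$ to $\varepsilon_1$ as $s$ runs from $0$ to $1$. Next, since $[c, \varepsilon_2] \subset Y_i$, I restrict Hardt's trivialization to obtain $h : p^{-1}([c, \varepsilon_2]) \to F \times [c, \varepsilon_2]$, and lift $\Phi_s$ to a homotopy $H_s$ that acts as the identity on $p^{-1}([c, \varepsilon_1])$ and, on $p^{-1}((\varepsilon_1, \varepsilon_2])$, sends a point with $h$-coordinates $(x, t)$ to the point with $h$-coordinates $(x, \Phi_s(t))$. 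By construction, $H_0$ is the identity, $H_1$ has image in $p^{-1}([c, \varepsilon_1])$, and $p^{-1}([c, \varepsilon_1])$ is fixed pointwise throughout, so $H$ is a strong deformation retract.

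The main technical point I expect is continuity of the piecewise $H$ across the seam $p^{-1}(\{\varepsilon_1\})$: the identity branch and the trivialization branch must agree there. They do, because $\Phi_s(\varepsilon_1) = \varepsilon_1$ for every $s$, forcing the trivialization branch at $t = \varepsilon_1$ to evaluate to the point with $h$-coordinates $(x, \varepsilon_1)$, which coincides with the identity branch. Definability and continuity on each open piece are automatic from definability of $h$ and $\Phi$ and the continuity of composition. The second deformation retract, from $p^{-1}([\varepsilon_1, d])$ onto $p^{-1}([\varepsilon_2, d])$, is then obtained by the mirror construction, using the retract on $[\varepsilon_1, d]$ that fixes $[\varepsilon_2, d]$ and pushes $[\varepsilon_1, \varepsilon_2)$ up to $\varepsilon_2$.
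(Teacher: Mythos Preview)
Your proposal is correct and follows exactly the paper's approach: the same linear retract $\Phi_s$ on the base interval, lifted to the total space via the trivialization $h$, with the identity on the piece $p^{-1}([c,\varepsilon_1])$ and the seam check at $t=\varepsilon_1$. One small caveat: you assert $[c,\varepsilon_2]\subset Y_i$, but the statement allows $c\le a$; fortunately your actual construction only invokes $h$ over $[\varepsilon_1,\varepsilon_2]\subset(a,b)$, so the argument goes through unchanged.
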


\medskip

In the sequel, we assume that the underlying o-minimal structure is polynomially bounded. We will need the following results.
\begin{thm}[\L ojasiewicz inequality \cite{Dries-Miller96}]\label{them_loj}
Let $f, g : U \subset \mathbb{R}^n \to \mathbb{R}$ be continuous definable functions on a compact definable set $U$. Suppose that 
\[
\{x \in U : g(x) = 0\} \subset \{x \in U : f(x) = 0\}.
\]
Then, there exist constants $C > 0$ and $\theta > 0$ such that
\[
|g(x)| \ge C |f(x)|^{\theta} \quad \text{for all } x \in U.
\]
\end{thm}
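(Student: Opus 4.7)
The plan is to construct an auxiliary definable function of a single real variable that encodes the control of $|f|$ by $|g|$, and then invoke polynomial boundedness to upgrade a qualitative decay statement into a quantitative power bound. Concretely, I would define $\psi:[0,\infty) \to [0,\infty)$ by
\[
\psi(s) := \sup\{|f(x)| : x \in U,\ |g(x)| \le s\}.
\]
Since $U$ is compact, $|f|$ is continuous, and the slice $\{x \in U : |g(x)| \le s\}$ is definable, the supremum is attained and $\psi$ is a well-defined, monotone nondecreasing, definable function.

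The first substantive step is to show $\lim_{s \to 0^+} \psi(s) = 0$. If this failed, there would be a sequence $s_n \to 0^+$ and points $x_n \in U$ with $|g(x_n)| \le s_n$ and $|f(x_n)| \ge \delta$ for some $\delta > 0$. Compactness of $U$ lets us pass to a subsequence converging to some $x_\infty \in U$; continuity of $g$ and $f$ then forces $g(x_\infty) = 0$ and $|f(x_\infty)| \ge \delta$, contradicting the hypothesis $\{g = 0\} \subset \{f = 0\}$.

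The decisive step is where polynomial boundedness enters. Because $\psi$ is a definable function of one variable satisfying $\psi(s) \to 0$ as $s \to 0^+$, the classical asymptotic analysis available in a polynomially bounded o-minimal structure (obtained from the usual polynomial bound at infinity by the inversion $s \mapsto 1/s$ and applied to $1/\psi(1/s)$) provides constants $r > 0$ and $C' > 0$ such that $\psi(s) \le C' s^{r}$ for all sufficiently small $s > 0$.

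Given this bound, for $x \in U$ with $|g(x)|$ small we take $s = |g(x)|$ and obtain $|f(x)| \le \psi(|g(x)|) \le C'|g(x)|^{r}$, which rearranges to $|g(x)| \ge (C')^{-1/r}|f(x)|^{1/r}$. On the complementary region, where $|g(x)|$ is bounded below by some $\eta > 0$, the value $|f(x)|$ is bounded above by compactness, so the desired inequality follows by shrinking the constant. Setting $\theta := 1/r$ and taking $C$ to be the minimum of the two constants yields the claim. The principal obstacle is the power bound $\psi(s) \lesssim s^{r}$: this is exactly where polynomial boundedness is indispensable, since in a non-polynomially-bounded o-minimal structure $\psi$ could tend to $0$ slower than any power (e.g.\ like $1/\log(1/s)$), and no Łojasiewicz-type conclusion would follow.
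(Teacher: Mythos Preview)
The paper does not supply a proof of this statement; it is quoted as a background result from \cite{Coste} and used later in the proof of Theorem~\ref{thm_lne}. So there is no ``paper's own proof'' to compare against. Your argument is the standard one found in the o-minimal literature (define the definable envelope $\psi(s)=\sup\{|f(x)|:x\in U,\ |g(x)|\le s\}$, use compactness plus the zero-set inclusion to force $\psi(0^+)=0$, then extract a power bound from polynomial boundedness), and the overall structure is correct.

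There is, however, a slip in your parenthetical justification of the power bound. Applying the polynomial bound at infinity to $1/\psi(1/s)$ gives the inequality in the wrong direction: since $1/\psi(1/s)\to\infty$ as $s\to\infty$, polynomial boundedness yields $1/\psi(1/s)\le s^N$, hence $\psi(t)\ge t^N$ for small $t$, which is a \emph{lower} bound on $\psi$, not the upper bound you need. To repair this you can either invoke Miller's asymptotic theorem directly (in a polynomially bounded structure every nonzero definable germ at $0^+$ is asymptotic to $ct^r$ for some $r$ in the field of exponents, and $\psi\to 0$ forces $r>0$), or argue via the compositional inverse: by o-minimality $\psi$ is strictly increasing near $0$ (the constant case being trivial), and if $\psi(s)/s^r\to\infty$ for every $r>0$ then $\psi^{-1}(u)\le u^R$ for every $R>0$ near $0$, so $t\mapsto 1/\psi^{-1}(1/t)$ grows faster than every power of $t$, contradicting polynomial boundedness. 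With this correction your proof is complete.
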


The following result is due to Valette \cite{Valette2021} (see also \cite{Valette_book}). 
Although the original statement was proved in the subanalytic setting, the same argument applies to any polynomially bounded o-minimal structure.

\begin{thm}[Lipschitz conic structure \cite{Valette2021}]
\label{thm_Valette_retraction}
Let $X \subset \mathbb{R}^n$ be a definable set and $x_0 \in X$. 
For $\varepsilon > 0$ small enough, there exists a Lipschitz definable homeomorphism
\[
H : x_0 * (S(x_0, \varepsilon) \cap X) \longrightarrow \overline{B}(x_0, \varepsilon) \cap X,
\]
satisfying $H|_{S(x_0, \varepsilon) \cap X} = \mathrm{Id}$, preserving the distance to $x_0$, 
and having the following metric properties:
\begin{enumerate}    
   \item  The natural retraction by deformation onto $x_0$
    \[
    r : [0,1] \times \overline{B}(x_0, \varepsilon) \cap X \longrightarrow 
    \overline{B}(x_0, \varepsilon) \cap X,
    \]
    defined by
    \[
    r(s, x) := H\big(s H^{-1}(x) + (1-s)x_0\big),
    \]
    which is Lipschitz. Indeed, there exists a constant $C$ such that for every fixed 
    $s \in [0,1]$, the mapping $r_s$ defined by $x \mapsto r(s,x)$ is $Cs$-Lipschitz.
    
   \item  For each $\delta > 0$, the restriction of $H^{-1}$ to 
    $\{ x \in X : \delta \le \|x - x_0\| \le \varepsilon \}$ is Lipschitz and, 
    for each $s \in (0,1]$, the map 
    \[
    r_s^{-1} : \overline{B}(x_0, s\varepsilon) \cap X \longrightarrow 
    \overline{B}(x_0, \varepsilon) \cap X
    \]
    is Lipschitz.
\end{enumerate}
\end{thm}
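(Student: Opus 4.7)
The plan is to construct $H$ by integrating a carefully chosen radial vector field on a Lipschitz-type cell decomposition of $X$ near $x_0$. First I would invoke the existence of a definable cell decomposition adapted to the distance function $\rho(x) = \|x-x_0\|$ so that, after shrinking $\varepsilon$, on each cell $S$ (not containing $x_0$) the restriction $\rho|_S$ is a definable submersion onto an interval. Using the polynomially bounded hypothesis, I would upgrade this to an $L$-regular / Lipschitz cell decomposition in the sense of Kurdyka--Pawłucki--Parusiński, so that directions tangent to the strata are uniformly transverse to the level sets of $\rho$ in a quantitative sense.

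On each stratum I would then select the vector field $v_S$ tangent to $S$ with minimal norm satisfying $v_S \cdot \nabla \rho = 1$. The heart of the argument is to show that, thanks to the Łojasiewicz inequality (Theorem~\ref{them_loj}) applied to $|\nabla^S\rho|^2$ and the distance to the lower-dimensional cells, these fields glue into a single definable vector field $v$ on $X\setminus\{x_0\}$ satisfying a uniform bound $\|v(x)\| \le C$ on the whole punctured neighborhood, together with a lower bound $\|v(x)\|\ge c > 0$ away from $x_0$. Integrating $v$ from time $t=\varepsilon$ inward defines a flow $\phi_t$ carrying $S(x_0,\varepsilon)\cap X$ to $S(x_0,t)\cap X$, continuous at $t=0$, and I would take $H$ to be the map sending the cone point $(y,t) \in x_0 \ast (S(x_0,\varepsilon)\cap X)$ to $\phi_t(y)$. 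By construction $H$ preserves $\rho$, restricts to the identity on $S(x_0,\varepsilon)\cap X$, and is a definable homeomorphism. The upper bound $\|v\|\le C$ gives the Lipschitz property of $H$; the lower bound $\|v\|\ge c$ on annuli gives the Lipschitz property of $H^{-1}$ on $\{\delta \le \rho \le \varepsilon\}$.

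The retraction $r_s(x) = H(sH^{-1}(x) + (1-s)x_0)$ is then, in cone coordinates, the scaling $(y,t)\mapsto (y,st)$ conjugated by $H$. Since this cone-scaling is exactly $s$-Lipschitz, the Lipschitz constants of $H$ and $H^{-1}|_{\text{annulus}}$ combine to give a uniform constant $C$ such that $r_s$ is $Cs$-Lipschitz, and similarly $r_s^{-1}$ restricted to $\overline{B}(x_0, s\varepsilon)\cap X$ is Lipschitz with constant of order $1/s$. These are the metric conclusions of the theorem.

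The main obstacle is step two: producing the vector field $v$ with a \emph{uniform} bound as $x\to x_0$. Pointwise, the existence of $v_S$ on each cell is trivial, but gluing across cells of different dimensions while controlling the norm requires that the angle between the tangent spaces of adjacent strata and the level sets of $\rho$ does not degenerate too fast; this is exactly what the Lipschitz / $L$-regular cell decomposition provides, and its existence in an arbitrary polynomially bounded o-minimal structure is the nontrivial ingredient borrowed from the o-minimal analogue of Mostowski's Lipschitz stratification theory. The rest of the argument, including the Lipschitz estimates on $r_s$ and the continuity of $H$ at the apex, follows by standard ODE estimates once the vector field is controlled.
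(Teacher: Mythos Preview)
The paper does not prove this theorem; it is quoted verbatim as a result of Valette~\cite{Valette2021} (with the remark that the subanalytic argument carries over to any polynomially bounded o-minimal structure), so there is no in-paper proof to compare your sketch against. That said, your plan---building $H$ as the flow of a controlled radial vector field on an $L$-regular/Lipschitz cell decomposition---is exactly the strategy of Valette's original argument, and the identification of the key technical input (a Lipschitz stratification in the o-minimal sense, guaranteeing uniform transversality of strata to the level spheres of $\rho$) is correct.

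There is, however, one genuine gap. Your derivation of the uniform $Cs$-Lipschitz bound for $r_s$ says that ``the Lipschitz constants of $H$ and $H^{-1}|_{\text{annulus}}$ combine'' with the $s$-Lipschitz cone scaling. But the Lipschitz constant of $H^{-1}$ on $\{\delta\le\rho\le\varepsilon\}$ typically blows up as $\delta\to 0$ (think of a cusp: two points on distinct branches at height $t$ are $\sim t^{3/2}$ apart in $X$, but $\sim t$ apart in the cone), and item~(2) of the statement only asserts Lipschitzness for each fixed $\delta$. The naive composition therefore gives no bound uniform in $s$. In Valette's proof the $Cs$-Lipschitz estimate is obtained not by factoring through $H^{-1}$, but by a direct estimate on the flow: one controls the \emph{tangential} derivative of the vector field $v$ along level spheres (not merely $\|v\|\le C$) and feeds this into the variational equation for the flow, which yields the required contraction. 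You would need to make that finer estimate explicit to close the argument.
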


From now on, in Sections \ref{section3}, \ref{section4} and \ref{section5} we fix a polynomially bounded o-minimal structure, and by “definable” we mean definable within this structure.

\section{Lipschitz normal embeddings}\label{section3}

Let $X \subset \mathbb{R}^n$ be a connected definable set.  
For points $x, y \in X$, the \emph{outer distance} is defined by
\[
d_{\mathrm{out}}(x,y) = \|x - y\|.
\]

The \emph{inner distance} between $x$ and $y$ is given by
\[
d_{\mathrm{in}}(x,y) = \inf_{\gamma} \length(\gamma),
\]
where the infimum is taken over all rectifiable curves $\gamma \subset X$ joining $x$ and $y$.

We say that $X$ is \textit{Lipschitz normally embedded} (or LNE for short) if there exists a constant $C \geq 1$ such that
\[
d_{\mathrm{in}}(x,y) \le C\, d_{\mathrm{out}}(x,y)
\]
for all $x, y$ in $X$.  Intuitively, this means that the intrinsic metric of $X$ and the ambient Euclidean metric are equivalent. 

Let $x_0 \in \overline{X}$. The set $X$ is said to be LNE at a point $x_0 \in \overline{X}$ (or the germ $(X, x_0)$ is LNE)  if there is a small neighborhood $U$ of $x_0$ in $\bb R^n$ such that $U\cap X$ is LNE.  

It is still unknown whether the inner metric $d_{\mathrm{in}}$ is itself definable.  
However, Kurdyka and Orro~\cite{Kurdyka1997} proved that there exists a definable metric on $X$, denoted by $\tilde{d}$, which is equivalent to $d_{\mathrm{in}}$.  
This allows one to verify the LNE condition for $X$ using $\tilde{d}$ instead of $d_{\mathrm{in}}$.

Another important distance is the \emph{diameter distance}, denoted by $d_{\diam}(x,y)$, defined as the infimum of the diameters of a  curve in $X$ joining $x$ and $y$. It was proved in \cite[Theorem~3.2]{Sampaio2025} that, in o-minimal structures on $\mathbb{R}$ (not necessarily polynomially bounded), the distances $d_{\rm in}$ and $d_{\diam}$ are equivalent. As a consequence, we have $\tilde{d}$ and $d_{\diam}$ are equivalent.

\begin{thm}\label{thm_lne}
Let $(X, 0) \subset \mathbb{R}^n$ and $(Y, 0) \subset \mathbb{R}^m$ be definable germs. Suppose $(X, 0)$ is LNE. There exists $\alpha_0 \in (0,1)$ such that if $(X, 0)$ and $(Y, 0)$ are bi-$\alpha$-H\"older equivalent for some $\alpha \in [\alpha_0, 1]$ then $(Y, 0)$ is also LNE.
\end{thm}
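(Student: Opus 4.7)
My plan is a contradiction argument using the intrinsic \L ojasiewicz exponent associated with Kurdyka-Orro's definable inner metric $\tilde d_Y$ on $Y$, which is equivalent to $d^Y_{\mathrm{in}}$. Applying Theorem~\ref{them_loj} to the definable functions $(y_1,y_2)\mapsto\tilde d_Y(y_1,y_2)$ and $(y_1,y_2)\mapsto\|y_1-y_2\|$ on a compact neighborhood of $(0,0)$ in $Y\times Y$---both vanishing exactly on the diagonal---yields a maximal exponent $\theta_Y\in(0,1]$ such that $\tilde d_Y(y_1,y_2)\leq C\|y_1-y_2\|^{\theta_Y}$ near the origin; LNE of $(Y,0)$ is equivalent to $\theta_Y=1$. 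Suppose for contradiction that $\theta_Y<1$. The heart of the proof will be to establish the inequality $\theta_Y\geq\alpha^2$ from the bi-$\alpha$-H\"older equivalence with the LNE germ $(X,0)$; setting $\alpha_0:=(1+\sqrt{\theta_Y})/2\in(\sqrt{\theta_Y},1)$ then gives a contradiction $\alpha^2\geq\alpha_0^2>\theta_Y$ for any $\alpha\in[\alpha_0,1]$, violating the maximality of $\theta_Y$.

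To prove $\theta_Y\geq\alpha^2$, I would use the curve selection lemma to pick definable curves $y_1(t),y_2(t)\in Y$, $t\to 0^+$, saturating the \L ojasiewicz bound, with asymptotics $\|y_i(t)\|\sim t^{b_i}$, $\|y_1(t)-y_2(t)\|\sim t^b$, and $\tilde d_Y(y_1(t),y_2(t))\sim t^{b\theta_Y}$. Setting $x_i(t):=\varphi^{-1}(y_i(t))$, the bi-$\alpha$-H\"older bounds yield $\|x_i(t)\|\sim t^{c_i}$ with $c_i\in[b_i\alpha,b_i/\alpha]$ and $\|x_1(t)-x_2(t)\|\sim t^e$ with $e\in[b\alpha,b/\alpha]$. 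The critical step is the ``straddling'' relation in $X$---namely $e=\min(c_1,c_2)$, equivalently $\|x_1(t)-x_2(t)\|\sim\|x_1(t)\|+\|x_2(t)\|$. I expect this to follow because $\theta_Y<1$ forces the $Y$-geodesic joining $y_1(t)$ and $y_2(t)$ to pass arbitrarily close to the origin, so that $y_1(t)$ and $y_2(t)$ approach $0$ from distinct local components of $Y\setminus\{0\}$; the homeomorphism $\varphi^{-1}$ preserves this component structure, and LNE of $(X,0)$ then yields the claimed outer relation. Combining the straddling $e=\min(c_1,c_2)\leq\min(b_1,b_2)/\alpha=b\theta_Y/\alpha$ with the H\"older lower bound $e\geq b\alpha$ gives $b\alpha\leq b\theta_Y/\alpha$, hence $\theta_Y\geq\alpha^2$.

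The main obstacle is the rigorous justification of the straddling step, together with the auxiliary equality $b\theta_Y=\min(b_1,b_2)$: both are transparent in cuspidal examples, such as the cusp $y^2=x^3$ where $\theta_Y=2/3$ and the sharp threshold is $\alpha_0=\sqrt{2/3}$, but require delicate analysis in general. I anticipate using Valette's Lipschitz conic structure (Theorem~\ref{thm_Valette_retraction}) to separate radial from transversal behavior, together with Kurdyka-Orro's pancake decomposition to identify the local components of $Y\setminus\{0\}$ traversed by the bad pair. The basic fact that $\varphi$ extends continuously to the origin and sends $0$ to $0$ is essential in transporting these structural features from $Y$ to $X$, as is the LNE hypothesis on $(X,0)$, which prevents the $X$-geodesic from circumventing the origin even when a long path through $0$ exists.
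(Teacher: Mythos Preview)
Your overall target—show $\theta_Y\ge\alpha^2$ and contradict optimality of $\theta_Y<1$—is exactly the paper's, but your route is considerably more complicated, and the straddling step is a genuine gap.

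The paper does not pull back the bad pair or analyse radial orders at all. It simply proves the \emph{universal} estimate $\tilde d_Y(y_1,y_2)\lesssim\|y_1-y_2\|^{\alpha^2}$ for all $y_1,y_2\in Y$ near $0$, by pushing forward a near-inner-geodesic $\gamma$ in $X$ (joining $x_i=\varphi^{-1}(y_i)$) under $\varphi$:
\[
\tilde d_Y(y_1,y_2)\ \lesssim\ \operatorname{length}\bigl(\varphi(\gamma)\bigr)\ \lesssim\ \operatorname{length}(\gamma)^{\alpha}\ \sim\ \tilde d_X(x_1,x_2)^\alpha\ \lesssim\ \|x_1-x_2\|^\alpha\ \lesssim\ \|y_1-y_2\|^{\alpha^2},
\]
the last two steps using LNE of $(X,0)$ and the bi-$\alpha$-H\"older lower bound respectively. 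No curve selection, no component analysis, no conic structure.

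Your chain requires $e\le\min(c_1,c_2)$, but the triangle inequality only gives $e\ge\min(c_1,c_2)$. The proposed justification is heuristic: non-LNE of $Y$ does not force the inner geodesic between $y_1(t),y_2(t)$ to pass near $0$, and even if $x_1(t),x_2(t)$ land in ``different components'' of $X\setminus\{0\}$, LNE of $X$ does not by itself imply $\|x_1-x_2\|\sim\|x_1\|+\|x_2\|$—when $X$ is connected (e.g.\ $X=\mathbb R^k$) there are no components to separate, yet pairs with $e>\min(c_1,c_2)$ abound. Your auxiliary equality $b\theta_Y=\min(b_1,b_2)$ is also unproved, though in fact you only need the inequality $\min(b_1,b_2)\le b\theta_Y$, and that does follow from bounding $\tilde d_Y$ by the radial path through $0$. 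The hard gap is straddling, and neither Valette's conic structure nor the pancake decomposition obviously closes it; the paper's push-forward argument bypasses the issue entirely.
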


\begin{proof}
We will prove by contradiction that if $Y$ is not LNE then it cannot be bi-$\alpha$-H\"older homeomorphic to $(X, 0)$ when $\alpha$ is sufficiently close to $1$.

Indeed, suppose $(Y,0)$ is not LNE,   
by the \L ojasiewicz inequality (see Theorem \ref{them_loj}), there exist constants $C > 0$ and $0 < \beta < 1$ such that
\begin{equation}\label{eq_0}
    \tilde{d}(x, y) \geq C\,\|x - y\|^{\beta}
\end{equation}
for all $x, y \in Y$ sufficiently close to the origin.

Let $\varphi : (X,0) \to (Y,0)$ be an bi-$\alpha$-H\"older homeomorphism.  
For points $x, y \in Y$ near $0$, set $x' := \varphi^{-1}(x)$ and $y' := \varphi^{-1}(y)$. Since $\tilde{d}$ and $d_{\diam}$ are equivalent, we can choose a curve $\gamma_{x'y'}$ in $X$ joining $x'$ and $y'$ such that 
\[
\diam(\gamma_{x'y'}) \sim \tilde{d}(x', y').
\]
Note that $\varphi(\gamma_{x'y'})$ is a curve in $Y$ joining $x$ and $y$. Then, 
\begin{equation}\label{eq_1}
    \tilde{d}(x, y) \lesssim \diam(\varphi(\gamma_{x'y'}))
    \lesssim \big(\diam(\gamma_{x'y'})\big)^{\alpha} \sim \tilde{d}(x',y')^{\alpha}.
\end{equation}  
Since $(X,0)$ is LNE, 
\begin{equation}\label{eq_2}
     \tilde{d}(x', y')^{\alpha}
    \sim \|x' - y'\|^{\alpha}
    \lesssim  \|x - y\|^{\alpha^2}.
\end{equation}
Combining \eqref{eq_1} and \eqref{eq_2} gives
\begin{equation}\label{eq_3}
    \tilde{d}(x, y) \lesssim \|x - y\|^{\alpha^2}.
\end{equation}
When $\alpha$ is close enough to $1$ we have  $\beta < \alpha^2 <1$,  inequality \eqref{eq_3} then contradicts \eqref{eq_0}. The proof is complete. 
\end{proof}

\section{Dimension of tangent cones}\label{section4}
In this section, we prove that the dimension of the tangent cones of given definable germs must be the same if they are   bi-$\alpha$-H\"older homeomorphic when $\alpha$ is sufficiently close to $1$. The proof is inspired by the arguments of Koike and Paunescu~\cite{Koike2009} in the bi-Lipschitz case.

\begin{defn}\rm 
    Let $X\subset \bb R^n$ be a definable set. 
    \begin{itemize}
        \item The \textit{dimension of $X$} is defined by
    $$ \dim X =  \sup\{ \dim S: S \text{ is a }  C^1 \text{ submanifold of } \bb R^n \text{ contained in } X \}.$$
    \item Let $x_0 \in \overline{X}$. The dimension of the germ $(X, x_0)$, denoted by  $\dim (X, x_0)$,  is the dimension of $X\cap B(x_0, r)$ where  $r$ is sufficiently small. 
    \end{itemize}
\end{defn}

\begin{defn}\rm Let $X \subset \bb R^n$ be a set and let $x_0 \in \overline{X}$. The \textit{tangent cone of $X$} at $x_0$ is defined as
$$C_{x_0}(X) : =  \{ v\in \mathbb{R}^n: \exists (x_k)\subset X, (x_k) \to x_0,  \exists (\lambda_k) \subset \mathbb{R}_+,   \lambda_k x_k \to v\}.$$
\end{defn}
It is known that if $X$ is definable then  $C_{x_0} (X)$ is also definable, and moreover  $\dim C_{x_0}(X) \leq \dim (X, x_0)$.

\begin{defn}[Cone with vertex at the origin]\rm 
A subset $A \subset \bb R^n$ is called a \emph{cone with vertex at the origin}
(or simply a \emph{cone at $0$}) if for every $x \in A$ and every $t \geq 0$, we have
$tx \in A$.
\end{defn}

\begin{defn}[Sea-tangle neighborhood]\rm
    Let $X \subset \bb R^n$ with $0 \in \overline{X}$, and let $d, C>0$. 
    
    \begin{itemize}
    \item The \textit{sea-tangle neighborhood} $ST_d(X,C)$ of $X$ of degree $d$ and width $C$ is defined by:
    $$ST_d(X,C):=\{x \in \bb R^n: \dist(x,X) \leq C|x|^d\}.$$
    \item If $X\subset \bb R^n$ is a germ at $0$ then  we consider $ST_d(X, C)$ as a germ at $0$. 
    \end{itemize}

\end{defn}
\begin{defn}[ST equivalence]\rm 
 Let $X, Y$ be subsets of $\bb R^n$  such that $0 \in \overline{X}\cap \overline{Y}$. We say that $X$ and $Y$ are \textit{ST-equivalent}, if there are $d_1, d_2>1$, $C_1, C_2>0$ such that $Y \subset ST_{d_1}(X, C_1)$ and $X \subset ST_{d_2}(Y,C_2)$ as germs at $0 \in \bb R^n$. We write $X \sim_{ST} Y$.
 \end{defn}

\begin{lem}\label{lem_4}
    Let $X \subset \mathbb{R}^n$ be a germ at $0$. Let $\varphi: (\bb R^n, 0)\to (\bb R^n, 0)$  be  a bi-$\alpha$-H\"older homeomorphism. For $d>1$, $C>0$,  there exists $C_1, C_2 >0$ such that  
    $$ ST_{d/\alpha^2} (\varphi(X), C_1) \subset \varphi(ST_d(X, C)) \subset ST_{d\alpha^2} (\varphi(X), C_2).$$

\end{lem}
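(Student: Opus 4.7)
My plan is entirely direct: both inclusions follow by plugging the two defining inequalities of the bi-$\alpha$-Hölder map $\varphi$ into the definition of the sea-tangle neighborhood. Before starting, I record the two consequences obtained by fixing one endpoint at the origin,
\[
\|\varphi(z)\|\le L\|z\|^{\alpha},\qquad \|z\|\le L^{\alpha}\|\varphi(z)\|^{\alpha},
\]
so that $\|z\|$ and $\|\varphi(z)\|$ are comparable up to Hölder distortion. I will also use that the lower bound in the bi-$\alpha$-Hölder definition is exactly the statement that $\varphi^{-1}$ is Hölder of exponent $\alpha$ with constant $L^{\alpha}$.

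For the right-hand inclusion $\varphi(ST_d(X,C))\subset ST_{d\alpha^{2}}(\varphi(X),C_{2})$, I take $z\in ST_d(X,C)$, choose $x\in X$ (or a minimising sequence, if $X$ is not closed) with $\|z-x\|\le C\|z\|^{d}$, push both through $\varphi$, and apply the upper Hölder bound to obtain $\|\varphi(z)-\varphi(x)\|\le LC^{\alpha}\|z\|^{d\alpha}$. The factor $\|z\|^{d\alpha}$ is then converted into $\|\varphi(z)\|^{d\alpha^{2}}$ via the second inequality above, giving the desired estimate with an explicit constant $C_{2}=LC^{\alpha}L^{d\alpha^{2}}$.

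The left-hand inclusion is proved symmetrically. Starting from $y\in ST_{d/\alpha^{2}}(\varphi(X),C_{1})$, I pick $w=\varphi(x)\in\varphi(X)$ with $\|y-w\|$ close to the infimum, apply the upper Hölder bound for $\varphi^{-1}$ to get $\|\varphi^{-1}(y)-x\|\le L^{\alpha}C_{1}^{\alpha}\|y\|^{d/\alpha}$, and then convert $\|y\|^{d/\alpha}$ into $\|\varphi^{-1}(y)\|^{d}$ via $\|y\|\le L\|\varphi^{-1}(y)\|^{\alpha}$. The upshot is $\dist(\varphi^{-1}(y),X)\le K C_{1}^{\alpha}\|\varphi^{-1}(y)\|^{d}$ for an explicit $K=K(L,d,\alpha)$, and it then suffices to choose $C_{1}$ small enough that $KC_{1}^{\alpha}\le C$.

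I do not anticipate any real obstacle; the lemma is a bookkeeping exercise in Hölder exponents, and the double $\alpha$ loss arises twice along the way (once converting a distance and once converting a norm, in each direction). The only minor technicality is that $\dist(z,X)$ is an infimum rather than a minimum when $X$ is not closed; this is handled either by passing to a minimising sequence of $x_{n}\in X$ or by replacing $X$ with $\overline{X}$, which leaves $ST_d(X,C)$ unchanged.
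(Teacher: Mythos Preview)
Your proposal is correct and follows essentially the same approach as the paper: both inclusions are obtained by applying the H\"older bounds of $\varphi$ (resp.\ $\varphi^{-1}$) first to the distance $\|z-x\|$ and then to the norm $\|z\|$, producing the double exponent loss $d\mapsto d\alpha^{2}$ (resp.\ $d\mapsto d/\alpha^{2}$). The paper carries out the right-hand inclusion explicitly and dismisses the left-hand one with ``similarly'', whereas you spell out both and add the remark about $X$ not necessarily being closed; otherwise the arguments are identical.
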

\begin{proof}
Since $\varphi$ is a bi-$\alpha$-H\"older homeomorphism, there exists a constant $K \geq 1$ such that
$$
\frac{1}{K}\|x\|^{1/\alpha} \le \|\varphi(x)\| \le K\|x\|^{\alpha}.
$$
Let $x \in ST_d(X, C)$. Then there exists $y \in X$ such that
$$
\|x - y\| \le C\|x\|^d.
$$
Applying the homeomorphism $\varphi$, we obtain
$$
\dist(\varphi(x), \varphi(X)) \le \|\varphi(x) - \varphi(y)\|
   \leq K\|x - y\|^{\alpha}
   \leq K C^\alpha\|x\|^{\alpha d}
    \leq K^{d\alpha^2 + 1} C^\alpha\|\varphi(x)\|^{\alpha^2 d}.
$$
Set $C_2 :=K^{d\alpha^2 + 1} C^\alpha$. Then
$$
\varphi(x) \in ST_{\alpha^2 d}(\varphi(X), C_2),
$$
that is,
$$
\varphi(ST_d(X, C)) \subset ST_{\alpha^2 d}(\varphi(X), C_2).
$$
Similarly, one can show that there is $C_1>0$ such that 
$$
ST_{d / \alpha^2}(\varphi(X), C_1) \subset \varphi(ST_d(X, C)).
$$
\end{proof}

\begin{lem}[{\cite[Theorem 4.14]{Koike2009}}]\label{lem_5}
    Let $X \subset \bb R^n$ be a definable germ at $0$. Then, $C_0(X)$ and $X$ are ST-equivalent.
\end{lem}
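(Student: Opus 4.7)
My plan is to apply the Łojasiewicz inequality (Theorem~\ref{them_loj}) to the two normalized distance functions
\[
h(x) \;:=\; \frac{\dist(x, C_0(X))}{\|x\|}, \qquad \tilde h(v) \;:=\; \frac{\dist(v, X)}{\|v\|},
\]
defined respectively on $X\setminus\{0\}$ and $C_0(X)\setminus\{0\}$ near the origin. Both are definable and non-negative; once one checks that each extends continuously by $0$ at the origin, Łojasiewicz applied to the pair $(\|\cdot\|, h)$---whose zero sets satisfy $\{\|\cdot\|=0\} = \{0\} \subset \{h=0\}$---yields $h(x) \lesssim \|x\|^{\alpha}$ for some $\alpha > 0$, equivalently
\[
\dist(x, C_0(X)) \;\lesssim\; \|x\|^{1+\alpha},
\]
and the same argument applied to $\tilde h$ gives the reverse containment. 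The resulting ST-exponents $d_1, d_2 = 1 + \alpha > 1$ satisfy the definition of ST-equivalence automatically, so no further refinement on the Łojasiewicz side is needed.

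The continuity of $h$ at $0$ is a direct consequence of the definition of the tangent cone: if $x_k \to 0$ in $X$ and $x_k/\|x_k\| \to w$ along a subsequence, then $w \in C_0(X)\cap S^{n-1}$, so $\|x_k\|\,w \in C_0(X)$ gives
\[
\dist(x_k, C_0(X)) \;\le\; \bigl\|x_k - \|x_k\|\,w\bigr\| \;=\; \|x_k\|\cdot\bigl\|x_k/\|x_k\| - w\bigr\| \;=\; o(\|x_k\|),
\]
and hence $h(x_k)\to 0$.

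The continuity of $\tilde h$ at $0$ is the main obstacle, because the defining sequence of the tangent cone only guarantees $X$-points at a particular sequence of decreasing scales, whereas $\tilde h$ must tend to $0$ along \emph{every} sufficiently small scale of $C_0(X)$. To upgrade the pointwise existence to a uniform scale-by-scale statement, I would consider the definable family of direction sets
\[
D_r \;:=\; \{x/r : x \in X \cap S(0,r)\} \;\subset\; S^{n-1}, \qquad r > 0.
\]
The closure in $S^{n-1}\times [0,\varepsilon]$ of $\{(x/\|x\|,\|x\|) : 0 < \|x\| < \varepsilon,\ x \in X\}$ is a definable set whose slice at $r = 0$ is precisely $C_0(X)\cap S^{n-1}$, and Hardt triviality of the projection onto the $r$-axis, together with curve selection for definable sets, yields that $D_r \to C_0(X)\cap S^{n-1}$ in Hausdorff distance as $r \to 0^{+}$. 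Combined with compactness of the link of $C_0(X)$, this produces, uniformly in $w \in C_0(X)\cap S^{n-1}$, a point $x_r \in X \cap S(0,r)$ with $\|x_r/r - w\| = o(1)$, so that for $v = rw \in C_0(X)$,
\[
\dist(v, X) \;\le\; \|v - x_r\| \;=\; r\,\|w - x_r/r\| \;=\; o(r) \;=\; o(\|v\|),
\]
giving $\tilde h(v) \to 0$.

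A second Łojasiewicz application then yields $\tilde h(v) \lesssim \|v\|^{\alpha'}$ for some $\alpha' > 0$, completing the proof that $C_0(X) \sim_{ST} X$. The principal technical hurdle is the uniform Hausdorff convergence $D_r \to C_0(X)\cap S^{n-1}$, which promotes the pointwise-in-direction definition of the tangent cone to the uniform-in-scale statement needed to feed into Łojasiewicz; everything else is a mechanical quantitative repackaging of standard qualitative features of tangent cones.
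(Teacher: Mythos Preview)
The paper does not give a proof of this lemma; it is simply quoted from \cite[Theorem~4.14]{Koike2009}. So there is no in-paper argument to compare against, and your proposal must stand on its own merits. It does: the strategy of applying the \L ojasiewicz inequality to the normalized distances $h(x)=\dist(x,C_0(X))/\|x\|$ and $\tilde h(v)=\dist(v,X)/\|v\|$ is exactly the right one, and is in the spirit of how the result is proved in the literature.

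Two minor points. First, in your argument for the continuity of $\tilde h$ at the origin, Hardt triviality is not quite the tool you want: the Hardt partition of the $r$-axis need not place $0$ in the interior of a piece, so it does not by itself deliver the Hausdorff convergence $D_r\to S_A:=C_0(X)\cap S^{n-1}$. The clean route is the monotonicity theorem. For each fixed $w\in S_A$, the one-variable definable function $r\mapsto\dist(rw,X)/r$ has a limit as $r\to 0^+$, and the sequence $r_k=\|x_k\|$ witnessing $w\in C_0(X)$ shows this limit is $0$. Uniformity in $w$ then follows from compactness of $S_A$ via the usual contradiction: if $\tilde h(s_kw_k)\ge\varepsilon$ along $s_k\to 0$, pass to a subsequence with $w_k\to w_0\in S_A$ and note
\[
\frac{\dist(s_kw_k,X)}{s_k}\ \le\ \frac{\dist(s_kw_0,X)}{s_k}+\|w_k-w_0\|\ \longrightarrow\ 0.
\]
Second, Theorem~\ref{them_loj} requires a compact domain, so you should work on $\overline X\cap\overline B(0,\varepsilon)$ and $C_0(X)\cap\overline B(0,\varepsilon)$; this is harmless since $C_0(\overline X)=C_0(X)$ and both $h$, $\tilde h$ extend continuously there.
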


\begin{lem}[{\cite[Corollary 4.12]{Koike2009}}]\label{lem_6} Let $X ,Y\subset \bb R^n$ be germs at $0$. Then, if $X$ and $Y$ are ST-equivalent then $C_0 (X) = C_0(Y)$.    
\end{lem}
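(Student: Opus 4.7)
The plan is to verify directly from the definitions that $C_0(X) \subset C_0(Y)$; the reverse inclusion then follows from the perfect symmetry of ST-equivalence, so I only need to use the hypothesis $X \subset ST_{d_2}(Y, C_2)$.

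Given $v \in C_0(X)$, I will produce a witness of $v \in C_0(Y)$. By definition there exist a sequence $(x_k) \subset X$ with $x_k \to 0$ and positive scalars $\lambda_k$ with $\lambda_k x_k \to v$. The ST-hypothesis yields $\dist(x_k, Y) \le C_2 \|x_k\|^{d_2}$ for $k$ large, so I can pick $y_k \in Y$ (or in $\overline{Y}$, using $C_0(Y) = C_0(\overline{Y})$) satisfying $\|y_k - x_k\| \le 2 C_2 \|x_k\|^{d_2}$. I will then check two things. First, $y_k \to 0$, which is immediate from $\|y_k\| \le \|x_k\| + 2C_2\|x_k\|^{d_2}$. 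Second, $\lambda_k y_k \to v$: writing $\lambda_k y_k = \lambda_k x_k + \lambda_k(y_k - x_k)$, the first summand converges to $v$ while the norm of the second is bounded by $2C_2 \|\lambda_k x_k\| \cdot \|x_k\|^{d_2 - 1}$, which tends to $0$ because $\|\lambda_k x_k\| \to \|v\|$ is bounded and $d_2 > 1$ with $x_k \to 0$. Hence $(y_k, \lambda_k)$ witnesses $v \in C_0(Y)$.

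The argument hinges entirely on the strict inequality $d_2 > 1$ in the definition of ST-equivalence; this is what makes the factor $\|x_k\|^{d_2 - 1}$ kill the perturbation $y_k - x_k$ after amplification by $\lambda_k$. With $d_2 = 1$ the perturbation would only remain bounded rather than vanish and the conclusion could fail. The only minor subtlety is the existence of an approximant $y_k \in Y$ realizing a near-minimum of the distance, which is handled either by the factor-of-$2$ slack in the bound or by passing to $\overline{Y}$ (which has the same tangent cone). I do not anticipate any real obstacle; the lemma is a clean consequence of the definitions once one observes that it is $d_2 > 1$ doing all the work.
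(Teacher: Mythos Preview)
Your argument is correct: the key observation that $\lambda_k(y_k-x_k)$ is controlled by $\|\lambda_k x_k\|\cdot\|x_k\|^{d_2-1}\to 0$ since $d_2>1$ is exactly what makes the lemma work, and your handling of the near-minimizer via a factor of~$2$ (or passage to $\overline{Y}$) is fine. The paper does not supply its own proof of this lemma---it is quoted from \cite[Corollary~4.12]{Koike2009}---so there is nothing to compare against; your direct verification from the definitions is the natural proof and matches what one expects the cited source to contain.
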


\begin{lem}[{\cite[Theorem 7.4]{Koike2009}}]\label{lem_8}
     Let $X, Y \subset \mathbb{R}^n$ be germs at $0$. Suppose that $X$ and $Y$ are $ST$-equivalent. Then, there is $d_1 > 1$ such that for any $C_1, C_2 > 0$ and for any $d$ with $1 < d \le d_1$:  
$$
\vol_n(ST_d(X, C_1) \cap  B(0, r)) \sim \vol_n(ST_d(Y, C_2) \cap  B(0, r))
$$ with $r$ sufficiently small. 
\end{lem}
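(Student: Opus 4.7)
The plan is to use the ST-equivalence of $X$ and $Y$ to relate sea-tangle neighborhoods of $X$ and $Y$ via a triangle-inequality argument, then to reduce the resulting volume comparison to a \emph{width-invariance} statement for sea-tangle neighborhoods of a single cone, which I would prove using a scaling argument combined with the power-law asymptotics available in the polynomially bounded setting.

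First I would unpack ST-equivalence: there exist $d_1', d_2' > 1$ and $C_1', C_2' > 0$ with $Y \subset ST_{d_1'}(X, C_1')$ and $X \subset ST_{d_2'}(Y, C_2')$ as germs at $0$. For $1 < d \le \min(d_1', d_2')$ and any $C_1 > 0$, I would establish that there exist $\widetilde{C}_1 > 0$ and $r_0 > 0$ with $ST_d(X, C_1) \cap B(0, r_0) \subset ST_d(Y, \widetilde{C}_1)$ by a direct triangle inequality: given $x \in ST_d(X, C_1)$ with $|x|$ small, pick $x' \in X$ with $\|x - x'\| \le C_1|x|^d$, so $|x'| \le 2|x|$; then using $X \subset ST_{d_2'}(Y, C_2')$, choose $y \in Y$ with $\|x' - y\| \le C_2' |x'|^{d_2'} \le 2^{d_2'} C_2' |x|^d$, where the last inequality uses $d \le d_2'$ and $|x| \le 1$. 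Thus $\|x - y\| \le (C_1 + 2^{d_2'} C_2') |x|^d$. By the symmetric argument, $ST_d(Y, C_2) \subset ST_d(X, \widetilde{C}_2)$ for some $\widetilde{C}_2 > 0$.

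By monotonicity of volume, the two inclusions above reduce the lemma to the following width-invariance claim: for every definable germ $Z$ at $0$, every $d > 1$, and every $a, b > 0$,
$$
\vol_n(ST_d(Z, a) \cap B(0, r)) \sim \vol_n(ST_d(Z, b) \cap B(0, r)) \quad \text{as } r \to 0^+.
$$
Invoking Lemma~\ref{lem_5} (which gives $Z \sim_{ST} C_0(Z)$) together with the triangle-inequality argument above applied to the pair $(Z, C_0(Z))$ would further reduce this claim to the case where $Z = K$ is a cone at $0$. For such a cone, the rescaling $\lambda_\mu : x \mapsto \mu x$ fixes $K$, and a short computation yields $\lambda_\mu(ST_d(K, C)) = ST_d(K, C\mu^{1-d})$. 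Choosing $\mu := (a/b)^{1/(d-1)}$ would then produce the scaling identity
$$
\vol_n(ST_d(K, a) \cap B(0, r)) = \mu^{-n}\,\vol_n(ST_d(K, b) \cap B(0, \mu r)).
$$

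To close the argument, I would use the standard fact that in a polynomially bounded o-minimal structure every positive definable function $V : (0, \varepsilon) \to \mathbb{R}_{>0}$ with $V(r) \to 0$ as $r \to 0^+$ satisfies $V(r) \sim A r^\gamma$ for some constants $A, \gamma > 0$. Applying this to $V(r) := \vol_n(ST_d(K, b) \cap B(0, r))$ and substituting into the scaling identity yields $\vol_n(ST_d(K, a) \cap B(0, r)) \sim A \mu^{\gamma - n} r^\gamma$, which is comparable to $V(r)$ at the scale $r$. This would give width invariance for cones, hence for arbitrary definable germs at $0$, with $d_1$ taken to be the minimum of the ST-equivalence exponents arising in the three pairs $(X, Y)$, $(X, C_0(X))$, and $(Y, C_0(Y))$. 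I expect the main obstacle to be keeping the constants produced by these nested ST-equivalence reductions uniformly controlled and ensuring that the scaling step on the cone is applied within the admissible range of $d$; the invocation of the power-law asymptotic is then a routine appeal to the polynomially bounded hypothesis.
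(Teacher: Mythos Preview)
The paper does not prove this lemma itself; it is quoted verbatim from \cite{Koike2009}, so there is no in-paper argument to compare against.

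On its own merits, your outline is reasonable but the reduction of width invariance from a germ $Z$ to the cone $K=C_0(Z)$ has a genuine gap. Your coarse triangle-inequality step yields inclusions of the shape $ST_d(Z,a)\subset ST_d(K,a+M)$ and $ST_d(K,c)\subset ST_d(Z,c+M')$ with \emph{fixed} constants $M,M'>0$ coming from the ST data. From these and width invariance for $K$ you can only conclude $\vol_n(ST_d(Z,b)\cap B(0,r))\lesssim \vol_n(ST_d(Z,c+M')\cap B(0,r))$ for arbitrary $c>0$, which does \emph{not} give the required bound $\lesssim \vol_n(ST_d(Z,a)\cap B(0,r))$ when $a\le M'$; the reduction is circular there. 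The repair is straightforward: choose $d_1$ \emph{strictly} below all the relevant ST-exponents and note that for $d<d'$ the contribution from an inclusion $P\subset ST_{d'}(Q,C')$ is actually bounded by $2^{d'}C'\,|x|^{d'-d}\cdot|x|^d$, so the extra width is $O(r^{d'-d})\to 0$; then for any target $a>0$ one can shrink $r$ so that $ST_d(K,a/2)\cap B(0,r)\subset ST_d(Z,a)$, and the chain closes. Two smaller points: the lemma as stated does not assume $X,Y$ definable, whereas your reduction via Lemma~\ref{lem_5} does (this is harmless for the paper's only application in Lemma~\ref{main lemma}, where one germ is a definable cone, but it does not establish the stated generality); and your appeal to a generic power-law asymptotic is fragile because $ST_d(K,b)$ need not be definable in the ambient polynomially bounded structure for arbitrary real $d$---it is cleaner to invoke Lemma~\ref{lem_vol}, which already gives $\vol_n(ST_d(K,C)\cap B(0,r))\sim r^{\,n+(d-1)(n-\dim K)}$ directly.
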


\begin{lem}\label{lem_vol}
Let $A\subset\mathbb{R}^n$ be a definable cone at $0$.
Then, for every $d>1$ and $C>0$ there exists $r_0>0$ such that for all $r\in(0,r_0]$,
\[
\vol_n\big(ST_d(A,C)\cap B(0,r)\big) \sim \ r^{n+(d-1)(n-a)}.
\]
where $a = \dim A$ and the constant for "$\sim$" is independent of $r$. 
\end{lem}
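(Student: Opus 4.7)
I would pass to polar coordinates and exploit the conic structure of $A$ to decouple the radial and angular contributions. Since $A$ is invariant under positive scaling, $\dist(su,A)=s\,\dist(u,A)$ for $s>0$ and $u\in S^{n-1}$, hence $su\in ST_d(A,C)$ if and only if $\dist(u,A)\le Cs^{d-1}$. Writing $\rho(u):=\dist(u,A)$ for $u\in S^{n-1}$ and $T_\varepsilon:=\{u\in S^{n-1}:\rho(u)\le\varepsilon\}$, Fubini in polar coordinates gives
\[
\vol_n\bigl(ST_d(A,C)\cap B(0,r)\bigr)=\int_0^r s^{n-1}\sigma\bigl(T_{Cs^{d-1}}\bigr)\,ds,
\]
where $\sigma$ denotes the $(n-1)$-dimensional spherical measure on $S^{n-1}$.

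The heart of the argument is then the spherical tube estimate $\sigma(T_\varepsilon)\sim\varepsilon^{n-a}$ for all sufficiently small $\varepsilon>0$, with constants depending only on $A$. Let $L:=A\cap S^{n-1}$, a compact definable subset of dimension $a-1$ (the degenerate case $a=0$, where $A=\{0\}$, lies outside the scope of the lemma since then $ST_d(A,C)\cap B(0,r)=\emptyset$ for small $r$; so I may assume $a\ge 1$). A direct trigonometric computation using the cone structure shows that if $v\in L$ is a nearest point to $u$ with $\langle u,v\rangle>0$, then $\rho(u)=\sqrt{1-\langle u,v\rangle^2}$ while $|u-v|^2=2(1-\langle u,v\rangle)$, so $\rho(u)\sim\dist(u,L)$ for $u$ near $L$. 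Hence $T_\varepsilon$ is sandwiched, up to fixed constants, between Euclidean tubular neighborhoods of $L$ in $S^{n-1}$ of radii proportional to $\varepsilon$, and it suffices to estimate the volume of such tubes. The upper bound $\sigma(T_\varepsilon)\lesssim\varepsilon^{n-a}$ follows by covering $L$ with $O(\varepsilon^{-(a-1)})$ balls of radius $\varepsilon$ (the covering count being controlled via definable cell decomposition), each contributing at most $O(\varepsilon^{n-1})$ to the spherical volume of the tube. The lower bound $\sigma(T_\varepsilon)\gtrsim\varepsilon^{n-a}$ is obtained by selecting a relatively compact $C^1$ stratum $\Sigma$ of $L$ of maximal dimension $a-1$: along $\Sigma$, the normal bundle in $S^{n-1}$ has rank $n-a$, and the tubular neighborhood theorem with a uniform normal injectivity radius on a compact piece of $\Sigma$ produces a tube of volume $\gtrsim\varepsilon^{n-a}$.

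Substituting into the polar integral for $r\le r_0$ chosen small enough that $Cr^{d-1}$ lies in the valid range, I obtain
\[
\vol_n\bigl(ST_d(A,C)\cap B(0,r)\bigr)\sim C^{n-a}\int_0^r s^{n-1+(d-1)(n-a)}\,ds\sim r^{n+(d-1)(n-a)},
\]
with constants depending only on $A$, $d$, and $C$ (and not on $r$); the exponent $n-1+(d-1)(n-a)\ge n-1>-1$ since $d>1$ and $a\le n$, so the radial integral converges at $0$ and the asymptotics are genuine. The main obstacle I anticipate is the two-sided spherical tube estimate: the upper bound is a routine covering argument, but the lower bound requires carving out a top-dimensional $C^1$ stratum of $L$ on which one has uniform control of normal injectivity radius and curvature. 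This is precisely where definability of $A$, together with the polynomial boundedness assumption on the underlying o-minimal structure, is essential.
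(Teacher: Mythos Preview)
Your proposal is correct and follows essentially the same route as the paper: pass to polar coordinates, use the cone property $\dist(su,A)=s\,\dist(u,A)$ to reduce to a spherical tube estimate $\sigma(T_\varepsilon)\sim\varepsilon^{n-a}$ around $L=A\cap S^{n-1}$, then integrate radially. The paper's tube estimate is stated more tersely (it appeals to a $C^1$ stratification and says the top-dimensional strata dominate), but your more explicit covering argument for the upper bound and tubular-neighborhood argument for the lower bound amount to the same thing.

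One minor correction: your closing remark that polynomial boundedness is essential for the tube estimate is not accurate. The estimate $\sigma(T_\varepsilon)\sim\varepsilon^{n-a}$ holds in any o-minimal structure, since it depends only on $C^1$ stratification and finiteness of cell decomposition; the paper's proof of this lemma does not invoke polynomial boundedness at all.
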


\begin{proof}
Write points in polar form $x=\rho s$ with $\rho=\|x\|\in (0,\infty)$ and $s\in S^{n-1}$. 
By the polar coarea formula,
\[
\vol_n\big(ST_d(A,C)\cap B(0,r)\big)
=\int_0^r \rho^{\,n-1}\ \vol_{n-1}\big(E_\rho\big)\,d\rho,
\]
where 
\[
E_\rho:=\{\,s\in S^{n-1}:\ \rho s\in ST_d(A,C)\}.
\]

Let $S_A:=A\cap S^{n-1}$. Since $A$ is a cone, for all $\rho>0$ and $s\in S^{n-1}$,
\[
\operatorname{dist}(\rho s,A)=\rho\,\operatorname{dist}(s,A).
\]
Hence
\[
\rho s\in ST_d(A,C)\quad\Longleftrightarrow\quad
\operatorname{dist}(s,A)\le C\,\rho^{\,d-1}.
\tag{$\ast$}
\]
Thus, 
$$ E_\rho =  \{ s \in S^{n-1}: \dist(s, A) \leq C \rho^{d-1}\}.$$
Let us denote  $d_{S^{n-1}}(s, S_A)$ the spherical distance to $S_A$.
On $S^{n-1}$, in a small neighborhood $U$ of $S_A$,  $\dist(s, A) \sim d_{S^{n-1}}(s, S_A)$.
Therefore, there exist constants $\kappa_1,\kappa_2>0$ and $\rho_0>0$ such that, for all $0<\rho\le \rho_0$,
\[
\{\,s\in S^{n-1}:\ d_{S^{n-1}}(s, S_A)\le \kappa_1\,\rho^{\,d-1}\,\}
\ \subset\ 
E_\rho
\ \subset\ 
\{\,s\in S^{n-1}:\ d_{S^{n-1}}(s, S_A)\le \kappa_2\,\rho^{\,d-1}\,\}.
\tag{$\ast\ast$}
\]

We now estimate the measure of tubular neighborhoods of $S_A$ in $S^{n-1}$.
The set $S_A$ is a definable subset of $S^{n-1}$ of dimension $a-1$.
We fix a finite stratification of $S_A$ into $C^1$ embedded submanifolds of
$S^{n-1}$, called \emph{strata}.
Let $\Gamma$ be one of these strata.
By Weyl’s tube formula (see, for example, \cite{Weyl39}), for $\varepsilon>0$
sufficiently small,
\[
\vol_{n-1}\!\left(
\left\{ s \in S^{n-1} \mid d_{S^{n-1}}(s,\Gamma) \le \varepsilon \right\}
\right)
\sim
\varepsilon^{\,n-1-\dim \Gamma}\,
\vol_{\dim \Gamma}(\Gamma)
\sim
\varepsilon^{\,n-1-\dim \Gamma}.
\]
In particular, only the strata of maximal dimension $a-1$ contribute to the
leading term of the volume of small tubular neighborhoods, while strata of
lower dimension give rise to higher-order terms.
Thus, there exist constants $c_1,c_2,\varepsilon_0>0$ such that, for all $0<\varepsilon\le \varepsilon_0$,
\[
c_1\,\varepsilon^{\,n-a}
\ \le\ 
\vol_{n-1}\big(\{\,s\in S^{n-1}:\ d_{S^{n-1}}(s, S_A)\leq \varepsilon\,\}\big)
 \leq
c_2\,\varepsilon^{\,n-a}.
\tag{$\dagger$}
\]
Combining \((\ast\ast)\) with \((\dagger)\) and substituting $\varepsilon=\kappa_i\,\rho^{\,d-1}$ gives, for $0<\rho\le \rho_0$,
\[
\vol_{n-1}(E_\rho)\ \sim \ \rho^{\,(d-1)(n-a)}.
\]
Hence, for $0<r\le r_0:=\rho_0$,
\[
\vol_n\big(ST_d(A,C)\cap B(0,r)\big)
=\int_0^r \rho^{\,n-1}\,\vol_{n-1}(E_\rho)\,d\rho
\sim 
\int_0^r \rho^{\,n-1+(d-1)(n-a)}\,d\rho \sim \ r^{\,n+(d-1)(n-a)}.
\]
This completes the proof.
\end{proof}

\begin{lem}\label{lem_compare_volumes_r_rho} Let $X \subset \bb R^n$ be a definable germ at  $0$ with $\dim (X, 0) = k$. There is $r_0>0$ such that for all $0< r< \rho \leq  r_0$, we have 
$$ \vol_k \big(X \cap B(0, r)\big) \sim \bigg(\frac{\rho}{r}\bigg)^k\vol_k \big(X \cap B(0, r)\big),$$
\end{lem}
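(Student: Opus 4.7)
The plan is to reduce the inequality to the sharper claim that $\vol_k(X\cap B(0,r))\sim r^k$ for all $r$ in a one-sided neighborhood of $0$, with constants depending only on $X$. Once this scaling is established, the stated comparison follows immediately since
\[
\vol_k(X\cap B(0,\rho))\ \sim\ \rho^k\ =\ (\rho/r)^k\,r^k\ \sim\ (\rho/r)^k\,\vol_k(X\cap B(0,r)).
\]
So the whole task becomes bounding $\vol_k(X\cap B(0,r))/r^k$ above and below by positive constants independent of $r$.

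The main tool will be the Lipschitz conic structure (Theorem~\ref{thm_Valette_retraction}). Fix $\varepsilon>0$ small enough that the theorem applies and set $r_0:=\varepsilon$. Let $H:\Gamma\to X\cap\overline B(0,\varepsilon)$ be the resulting Lipschitz homeomorphism, where $\Gamma:=0*(X\cap S(0,\varepsilon))$ is the metric cone over the link, and let $\{r_s\}_{s\in(0,1]}$ be the associated family of retractions. Since $H$ preserves distance to $0$, the map $r_s$ restricts to a bijection $X\cap\overline B(0,\varepsilon)\to X\cap\overline B(0,s\varepsilon)$, and by part~(1) of the theorem it is $Ks$-Lipschitz for a constant $K$ independent of $s$. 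Applying the elementary volume distortion estimate $\vol_k(r_s(E))\le\Lip(r_s)^k\,\vol_k(E)$ with $E:=X\cap B(0,\varepsilon)$ and $s:=r/\varepsilon$ immediately gives the upper bound $\vol_k(X\cap B(0,r))\lesssim r^k$.

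For the matching lower bound I will use the complementary scaling estimate $\Lip(r_s^{-1})\lesssim 1/s$, a natural sharpening of Theorem~\ref{thm_Valette_retraction}(2) that is most conveniently seen through the conjugation identity $r_s=H\circ\eta_s\circ H^{-1}$, where $\eta_s$ is the ambient homothety by $s$: the global Lipschitz bound on $H$ combines with the Lipschitz bound on $H^{-1}$ on the fixed annulus $\{\varepsilon/2\le\|x\|\le\varepsilon\}$ (provided by Theorem~\ref{thm_Valette_retraction}(2) with $\delta=\varepsilon/2$) to yield the desired $1/s$-scaling. Applying the resulting volume estimate to $r_s^{-1}$ with $s=r/\varepsilon$ gives $\vol_k(X\cap B(0,\varepsilon))\lesssim s^{-k}\,\vol_k(X\cap B(0,s\varepsilon))$, which rearranges to $\vol_k(X\cap B(0,r))\gtrsim r^k$ and closes the argument.

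The main obstacle will be the careful derivation of the $1/s$-Lipschitz bound on $r_s^{-1}$, since the statement of Theorem~\ref{thm_Valette_retraction}(2) records only Lipschitzness for each individual $s$ without tracking how the constant depends on it. If this scaling is not accessible directly, a robust fallback is a dyadic decomposition $X\cap B(0,\varepsilon)\setminus\{0\}=\bigsqcup_{n\ge 0}A_n$ with $A_n:=X\cap\{2^{-(n+1)}\varepsilon<\|x\|\le 2^{-n}\varepsilon\}$; comparing each $A_n$, through $H$ and the linear homothety $\eta_{2^{-n}}$, to the single model annulus $\Gamma\cap\{\varepsilon/2<\|x\|\le\varepsilon\}$ on which both $H$ and $H^{-1}$ are Lipschitz, one obtains $\vol_k(A_n)\sim(2^{-n})^k$ with constants independent of $n$, and summing the resulting geometric series over the $A_n$ with $2^{-n}\varepsilon\ge r$ supplies the lower bound.
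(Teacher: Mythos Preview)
Your approach coincides with the paper's: both rest on the Lipschitz conic structure and the estimate $\Lip(r_s^{-1})\lesssim 1/s$ obtained from the identity $r_s^{-1}=H\circ\eta_{1/s}\circ H^{-1}$. The paper does not pass through the auxiliary statement $\vol_k(X\cap B(0,r))\sim r^k$; it sets $\varepsilon=\rho$ and writes directly
\[
|r_s^{-1}(y_1)-r_s^{-1}(y_2)|\le\frac{\Lip(H)\,\Lip(H^{-1})}{s}\,|y_1-y_2|,
\]
treating $\Lip(H^{-1})$ as a single global constant, and then records only the one inequality $\vol_k(X\cap B(0,\rho))\lesssim(\rho/r)^k\vol_k(X\cap B(0,r))$, which is all that is used downstream.

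Your attempted justification of the $1/s$ bound, however, does not work as written. In the composition $H\circ\eta_{1/s}\circ H^{-1}$ the map $H^{-1}$ is applied to points $y\in X\cap\overline B(0,s\varepsilon)$, which approach $0$ as $s\to 0$; the Lipschitz constant of $H^{-1}$ on the \emph{fixed} annulus $\{\varepsilon/2\le\|x\|\le\varepsilon\}$ is irrelevant there. The dyadic fallback has the same defect: to compare $A_n$ with the model annulus you must still apply $H^{-1}$ on the shrinking set $A_n=X\cap\{2^{-(n+1)}\varepsilon<\|x\|\le 2^{-n}\varepsilon\}$, and Theorem~\ref{thm_Valette_retraction}(2) provides no uniform-in-$n$ bound on $\Lip(H^{-1}|_{A_n})$. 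The paper sidesteps the issue by simply taking $\Lip(H^{-1})$ to be globally finite; you have correctly identified this as the crux, but neither your primary argument nor the fallback derives that bound from the stated form of Theorem~\ref{thm_Valette_retraction} alone.
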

where the constant for "$\sim$" is independent of $r$ and $\rho$. 
\begin{proof}

Take $r_0 >0$ sufficiently small such that Theorem \ref{thm_Valette_retraction} holds for all $\varepsilon< r_0$ (we may replace $X$ by $X \cup \{0\}$ if necessary). 
Let $\rho = \varepsilon < r_0$ and 
let  \[
   r_s(x)=  r(s, x) := H\big(s H^{-1}(x) + (1-s)x_0\big),
    \] be the retraction defined in Theorem \ref{thm_Valette_retraction}. 
Since $x_0 = 0$, we have 
\[
r_s(x)  = H(s \cdot H^{-1}(x)),
\]
and hence the inverse map of $r_s$ is
\[
r_s^{-1}(y) = H\big((1/s)\, H^{-1}(y)\big)
\qquad \text{for } y \in X\cap B(0,s\rho).
\]
For $y_1,y_2 \in X\cap B(0,s\rho)$, we have
$$
\begin{aligned}
\|r_s^{-1}(y_1) - r_s^{-1}(y_2)\|
&= \big\| H\big( (1/s)\, H^{-1}(y_1) \big) - H\big( (1/s)\, H^{-1}(y_2) \big) \big\| \\
&\le \operatorname{Lip}(H)\cdot \big\| (1/s)\, H^{-1}(y_1) - (1/s)\, H^{-1}(y_2) \big\| \\
&= \frac{\operatorname{Lip}(H)}{s}\, \big\| H^{-1}(y_1) - H^{-1}(y_2) \big\| \\
&\le \frac{\operatorname{Lip}(H)\operatorname{Lip}(H^{-1})}{s}\, \|y_1 - y_2\| = \frac{C}{s} \|y_1 - y_2\|.
\end{aligned}
$$
where $C = \Lip (H) \Lip(H^{-1})$ which is independent of $s$. Here, $\Lip(H)$ and $\Lip(H^{-1})$ denote the Lipschitz constants of $H$ and $H^{-1}$ respectively.  
This shows that the Lipschitz constant of $r_s^{-1}$ is  $\lesssim \frac{1}{s}$. 

With $s = r/\rho$ we have $r_s\big(X \cap B(0, \rho)\big) = X \cap B(0, r)$. Equivalently, 
$$ r_s^{-1}\big(X \cap B(0, r)\big) = X \cap B(0, \rho).$$
Since $r_s^{-1}$ is Lipschitz map with constant $\lesssim 1/s$, this gives
\begin{align*}
   &  \vol_k\bigg (r_s^{-1} \big( X \cap B(0, r)\big)\bigg) \lesssim \bigg(\frac{1}{s}\bigg)^k \vol_k \big(X \cap B(0, r)\big)\\
   \Longleftrightarrow & \vol_k\bigg(X \cap B(0, \rho)\bigg)  \lesssim \bigg(\frac{\rho}{r}\bigg)^k \vol_k \big(X \cap B(0, r)\big). 
\end{align*} 
\end{proof}

\begin{lem}\label{lem_3} 
Let $X\subset\mathbb{R}^n$ be a definable germ at $0$ with $\dim (X, 0) = n$. 
Let
\[
h:(\mathbb{R}^n,0)\to(\mathbb{R}^n,0)
\]
be a bi-$\alpha$-H\"older homeomorphism with $\alpha \in (0, 1]$.
Then, there exists a constant $r_0>0$ such that for every $0<r<r_0$,
$$
\vol_n \big(h(X)\cap B(0,r)\big)
\lesssim 
r^{(\frac{1}{\alpha}+1)(\alpha-1)n} \vol_n\big( X\cap B(0,r)\big),
$$
where the constant for "$\lesssim$" is independent of $r$. 
\end{lem}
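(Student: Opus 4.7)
The plan is to use the bi-$\alpha$-H\"older bounds to trap the preimage $h^{-1}(B(0,r))$ inside a definable ball, and then control the image volume by an efficient covering at a carefully chosen scale.

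First, since $h$ is bi-$\alpha$-H\"older with constant $L$, the inverse satisfies $\|h^{-1}(y)\| \leq L^\alpha \|y\|^\alpha$, hence $h^{-1}(B(0,r)) \subseteq B(0,\rho)$ where $\rho := L^\alpha r^\alpha$. This yields the inclusion
\[
h(X) \cap B(0,r) \subseteq h\bigl(X \cap B(0,\rho)\bigr),
\]
reducing the problem to bounding $\vol_n\bigl(h(X \cap B(0,\rho))\bigr)$.

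I would then set the covering scale $\delta := (r/L)^{1/\alpha}$, chosen so that $L\delta^\alpha = r$ --- precisely the radius for which the upper H\"older inequality forces $h(B(z,\delta)) \subseteq B(h(z),r)$ for any center $z$. Using Hardt's triviality theorem to decompose $X \cap B(0,\rho)$ into finitely many definable cells (the top-dimensional cells being open subsets of $\mathbb{R}^n$ and carrying all the $n$-dimensional volume), I would produce a covering by balls $B(z_1,\delta),\dots,B(z_N,\delta)$ with $z_i \in X$ satisfying
\[
N \lesssim \vol_n(X \cap B(0,\rho))/\delta^n.
\]
Each image ball has $n$-volume at most $c_n r^n$, so summing and then applying Lemma~\ref{lem_compare_volumes_r_rho} to pass between the scales $r$ and $\rho$ gives
\[
\vol_n(h(X) \cap B(0,r)) \;\lesssim\; \frac{r^n}{\delta^n}\,\vol_n(X \cap B(0,\rho)) \;\lesssim\; \frac{\rho^n}{\delta^n}\,\vol_n(X \cap B(0,r)).
\]
Substituting the values of $\rho$ and $\delta$, one computes $\rho^n/\delta^n \sim r^{n(\alpha - 1/\alpha)} = r^{(1/\alpha + 1)(\alpha - 1)n}$, which is exactly the desired bound.

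The main obstacle is justifying the covering count $N \lesssim \vol_n(X \cap B(0,\rho))/\delta^n$. A naive Vitali packing only gives $N \lesssim (\rho/\delta)^n$, a bound that is oblivious to how thin the set $X$ might be. To obtain the volume-proportional count I would use Hardt's theorem to reduce to a finite union of top-dimensional open cells, where a Vitali packing can be performed internally, and then appeal to Valette's Lipschitz conic structure to control the $(n-1)$-dimensional surface area of the boundary, ensuring that the $\delta$-thickening of $X \cap B(0,\rho)$ remains comparable in volume to $X \cap B(0,\rho)$ itself at the scales in question.
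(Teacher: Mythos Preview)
Your overall plan is correct and close in spirit to the paper's: both trap $h^{-1}(B(0,r))$ inside $B(0,\rho)$ with $\rho\sim r^{\alpha}$, cover $E:=X\cap B(0,\rho)$ by small pieces, bound $\vol_n(h(E))$ by (number of pieces)$\times$(image-piece volume), and finish by invoking Lemma~\ref{lem_compare_volumes_r_rho} to pass from scale $\rho$ back to scale $r$. The paper covers $\overline{E}$ by cubes of a common small side $t$ chosen via outer regularity so that $\sum_i\vol_n(Q_i)\le\vol_n(E)+\varepsilon$, whereas you fix the scale $\delta=(r/L)^{1/\alpha}$ in advance so that each image piece lands in a ball of radius $r$. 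These are minor variations on the same covering theme.

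The issue lies in your handling of the covering count. You call the bound $N\lesssim\vol_n(X\cap B(0,\rho))/\delta^n$ the ``main obstacle'' and propose attacking it with a Hardt cell decomposition plus boundary surface-area control via Valette's conic structure. That workaround is vague and, as sketched, not obviously uniform in $r$: the cell decomposition and the boundary geometry of $X\cap B(0,\rho)$ vary with $\rho$, and you give no mechanism for controlling the constants. More to the point, the detour is unnecessary. The hypothesis $\dim(X,0)=n$, combined with Lemma~\ref{lem_compare_volumes_r_rho} applied with the larger radius held fixed at $r_0$, gives
\[
\vol_n\bigl(X\cap B(0,\rho)\bigr)\ \sim\ \rho^n
\]
with constants independent of $\rho$. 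Hence the trivial grid bound $N\lesssim(\rho/\delta)^n$ already \emph{is} the volume-proportional bound you seek. With this one-line observation in place of your Hardt--Valette paragraph, your argument goes straight through, and your final computation $(\rho/\delta)^n\sim r^{(1/\alpha+1)(\alpha-1)n}$ delivers the lemma.
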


\begin{proof} 
Let $L\geq 1$ be a H\"{o}lder constant of $h$. 
For $r>0$ small define
$$
\rho=\rho(r):=\sup\{\,|x|:\ |h(x)|\le r\,\}.
$$
Then,  $h^{-1}\big(B(0,r)\big)\subset B(0,\rho)$. Put
$$
E:=X\cap B(0,\rho).
$$
Fix a sufficiently small $r$ and put $D:=\diam(E)>0$. Let $\varepsilon>0$ be arbitrary. By outer regularity of Lebesgue measure and compactness of $\overline{E}$, there exists a finite family of closed axis-parallel cubes $\{Q_i\}_{i=1}^N$ of common side length $t$ with
\[
0<t\le D,
\qquad
E \subset \bigcup_{i=1}^N Q_i,
\qquad
\sum_{i=1}^N \vol_n (Q_i) \le \vol_n (E) + \varepsilon.
\]
Each cube $Q_i$ has side $t$ and therefore  $\diam(Q_i)=t\sqrt{n}$. Denote
\[
\omega_n :=\vol_n \big(B(0,1)\big)
\]
the volume of the unit ball in $\mathbb{R}^n$. By the H\"{o}lder bound,
\[
\diam\big(h(Q_i)\big)\le L\big(\diam Q_i\big)^{\alpha}
= L(\sqrt{n}\,t)^{\alpha}.
\]
 By Jung's theorem,  $h(Q_i)$ is contained in a ball of radius $2^{-1/2}\diam(h(Q_i))$, 
\[
\vol_n \big(h(Q_i)\big)\le
\omega_n \Big(\frac{\diam(h(Q_i))}{2^{-1/2}}\Big)^n
\le \omega_n 2^{-n/2} L^n n^{\frac{\alpha n}{2}} t^{\alpha n}.
\]
Set the constant
\[
C_1 := \omega_n 2^{-n} L^n n^{\frac{\alpha n}{2}}.
\]
Summing over $i$ gives
\[
\vol_n \big(h(E)\big)
\;\le\;
\sum_{i=1}^N \vol_n \!\big(h(Q_i)\big)
\;\le\;
C_1\, N\, t^{\alpha n}.
\]
From the measure bound for the cubes we have
\[
N t^n = \sum_{i=1}^N \vol_n (Q_i) \le \vol_n (E) + \varepsilon.
\]
Hence
\[
N t^{\alpha n} = (N t^n)\, t^{(\alpha-1)n}
\le (\vol_n (E)+\varepsilon)\, t^{(\alpha-1)n}.
\]
Combining with the previous estimate yields
\[
\vol_n \big(h(E)\big)
\le C_1(\vol_n (E)+\varepsilon)\, t^{(\alpha-1)n}.
\]
Since $t\le D$, we have 
\[
\vol_n \big(h(E)\big)
\le C_1(\vol_n (E)+\varepsilon)\, D^{(\alpha-1)n}.
\]
Letting $\varepsilon \to 0$ we obtain
\[
\vol_n \big(h(E)\big)
\le C_1\, D^{(\alpha-1)n}\,\vol_n (E).
\]
Since $D \leq 2\rho$, take $C' := 2^{(\alpha-1)n}C_1$ and obtain
\begin{equation}\label{eqq0}
    \vol_n \big(h(X)\cap B(0,r)\big) \leq \vol_n(h(E))
\leq C'\,\rho^{(\alpha-1)n}\vol_n\big(X\cap B(0,\rho)\big).
\end{equation}

To finish the proof, we have to compare $\vol_n\big(X\cap B(0,\rho)\big)$ and $\vol_n\big(X\cap B(0,r)\big)$. Note from the definition that $(1/L) r^{1/\alpha}\leq \rho \leq L r^{\alpha}$. Since $\alpha - 1< 0$, the left-hand-side inequality implies that 
\begin{equation}\label{eqq01}
    \rho^{(\alpha-1)n} \leq (1/L)^{(\alpha-1)n} r^{\frac{1}{\alpha} (\alpha-1)n}.
\end{equation}
and the right-hand-side one gives
\begin{equation}\label{eqq011}
\frac{\rho}{r} \leq L r^{\alpha-1}
\end{equation}
\textbf{Case 1: } $r \geq \rho$. It is clear that $\vol_n\big(X\cap B(0, \rho)\big) \leq \vol_n\big(X\cap B(0, r)\big)$. Hence,  
$$ \rho^{(\alpha-1)n} \vol_n\big(X\cap B(0, \rho)\big)\lesssim  r^{\frac{1}{\alpha} (\alpha-1)n}\vol_n\big(X\cap B(0, r)\big). $$
Combining with \eqref{eqq0} and by $(\alpha - 1)<0$, we get 
$$ \vol_n \big(h(X)\cap B(0,r)\big)
\lesssim r^{\frac{(\alpha-1)n}{\alpha}}\vol_n\big(X\cap B(0,r)\big) \lesssim r^{(\frac{1}{\alpha}+1)(\alpha-1)n}\vol_n\big(X\cap B(0,r)\big).$$
\textbf{Case 2: }$r <\rho$. By Lemma \ref{lem_compare_volumes_r_rho} and \eqref{eqq011},
$$\vol_n\big(X\cap B(0, \rho)\big) \lesssim \bigg(\frac{\rho}{r}\bigg)^n \vol_n\big(X\cap B(0, r)\big) \lesssim r^{(\alpha-1)n} \vol_n\big(X\cap B(0, r)\big).$$
Combining with \eqref{eqq0} and \eqref{eqq01} we get
$$ \vol_n \big(h(X)\cap B(0,r)\big)
\lesssim r^{(\frac{1}{\alpha}+1)(\alpha-1)n}\vol_n\big(X\cap B(0,r)\big). $$
\end{proof}

\begin{lem}\label{lem_7}
    Let $X\subset \bb R^n$ be a definable germ at $0$. Then, there is $0< \alpha_0 <1$ such that for any bi-$\alpha$-H\"older homeomorphism $\varphi: (\bb R^n, 0) \to (\bb R^n, 0)$ with $\alpha\geq \alpha_0$ we have $$\varphi(X) \sim_{ST} \varphi(C_0(X)).$$
    In particular,   $$C_0(\varphi(X)) = C_0(\varphi(C_0(X)).$$
\end{lem}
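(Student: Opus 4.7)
The plan is to reduce the statement to the machinery already established in Lemmas~\ref{lem_4}, \ref{lem_5}, and \ref{lem_6}. The starting point is Lemma~\ref{lem_5}, which tells us that the germ $X$ is ST-equivalent to its tangent cone: there exist exponents $d_1, d_2 > 1$ and constants $C_1, C_2 > 0$ (depending only on $X$) such that
\[
X \subset ST_{d_1}\big(C_0(X),\, C_1\big) \qquad\text{and}\qquad C_0(X) \subset ST_{d_2}\big(X,\, C_2\big)
\]
as germs at $0$.

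Next, I would apply the homeomorphism $\varphi$ to both containments and then invoke the right-hand inclusion of Lemma~\ref{lem_4}. This yields constants $C_1', C_2' > 0$ for which
\[
\varphi(X) \subset \varphi\big(ST_{d_1}(C_0(X), C_1)\big) \subset ST_{d_1 \alpha^2}\big(\varphi(C_0(X)),\, C_1'\big),
\]
and symmetrically,
\[
\varphi(C_0(X)) \subset ST_{d_2 \alpha^2}\big(\varphi(X),\, C_2'\big).
\]
The only point requiring care is that the definition of ST-equivalence demands the sea-tangle exponent to be strictly greater than $1$. Thus I would choose
\[
\alpha_0 \in (0,1) \quad \text{such that} \quad \alpha_0^2 > \max\!\left(\tfrac{1}{d_1}, \tfrac{1}{d_2}\right).
\]
Since $d_1, d_2 > 1$, such an $\alpha_0$ strictly less than $1$ exists, and it depends only on $X$, not on $\varphi$. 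For any $\alpha \ge \alpha_0$, both $d_1 \alpha^2$ and $d_2 \alpha^2$ are strictly greater than $1$, so the displayed inclusions exhibit $\varphi(X) \sim_{ST} \varphi(C_0(X))$.

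Having established the ST-equivalence, the ``In particular'' clause is immediate from Lemma~\ref{lem_6}, which states that ST-equivalent germs have identical tangent cones. I do not foresee a genuine obstacle here: the argument is a direct chaining of Lemmas~\ref{lem_4}--\ref{lem_6}, and the only quantitative content is the lower bound on $\alpha_0$ needed to keep the distorted sea-tangle exponents above $1$. The mild subtlety worth flagging is that the constants $d_1, d_2$ (hence $\alpha_0$) depend on the germ $X$ but are independent of the particular bi-$\alpha$-H\"older homeomorphism $\varphi$, which is precisely what the statement asserts.
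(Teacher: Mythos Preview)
Your proposal is correct and follows essentially the same route as the paper's own proof: invoke Lemma~\ref{lem_5} to get the ST-equivalence $X \sim_{ST} C_0(X)$ with exponents $d_1,d_2>1$, push both inclusions through $\varphi$ via Lemma~\ref{lem_4} to obtain sea-tangle exponents $d_i\alpha^2$, choose $\alpha_0$ so that these remain above $1$, and conclude with Lemma~\ref{lem_6}. Your write-up is in fact slightly more explicit than the paper's (you spell out $\alpha_0^2>\max(1/d_1,1/d_2)$ and note that $\alpha_0$ depends only on $X$), but the argument is the same.
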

\begin{proof}
    Since $X$ is definable, by Lemma \ref{lem_5}, $X$ and $C_0(X)$ is ST-equivalent. That is, there are $d_1, d_2 >1$ and $C_1, C_2>0$ such that 
    $ X \subset ST_{d_1}(C_0(X), C_1)$ and $ C_0(X) \subset ST_{d_2}(X, C_2)$ as germs at $0$. Applying Lemma \ref{lem_4}, we get $$\varphi(X) \subset \varphi\bigg(ST_{d_1}(C_0(X), C_1)\bigg) \subset ST_{d\alpha^2} (\varphi(C_0(X)), C_1'),$$
    and 
    $$\varphi(C_0(X)) \subset \varphi\bigg(ST_{d_1}(X, C_2)\bigg) \subset ST_{d\alpha^2} (\varphi(X), C_2')$$ for some $C_1', C_2'>0$. 
Choosing $\alpha_0$ close to $1$ such that $d\alpha_0^2>1$, we have $d\alpha^2 >1$ for all $\alpha_0 \leq \alpha\leq 1$. It is obvious that $\varphi(X)$ and $\varphi(C_0(X))$ are ST-equivalent. By Lemma  \ref{lem_6}, $C_0(\varphi(X)) = C_0(\varphi(C_0(X))$. The proof is complete.
\end{proof}

\begin{lem}\label{main lemma}
Let $E \subset \mathbb{R}^n$ be a definable cone at $0$. Then, there exists $0 < \alpha_0 < 1$ such that 
if $\varphi : (\mathbb{R}^n, 0) \to (\mathbb{R}^n, 0)$ is a bi-$\alpha$-H\"older homeomorphism for some $\alpha_0 \le \alpha \le 1$ such that 
\begin{enumerate}
    \item[(i)] $F := \varphi(E)$ and $C_0(F)$ are ST-equivalent,
    \item[(ii)] $C_0(F)$ is definable,
\end{enumerate}
then
\[
\dim C_0(F) \le \dim E.
\]
\end{lem}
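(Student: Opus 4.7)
The plan is to argue by contradiction using volume asymptotics for sea-tangle neighborhoods. Set $e := \dim E$ and $f := \dim C_0(F)$; the case $e = 0$ is trivial (then $E = \{0\}$ as a germ, so $F = C_0(F) = \{0\}$ and $f = 0$), so assume $e \geq 1$ and suppose for contradiction that $f > e$. Both $E$ and $C_0(F)$ are definable cones at $0$ (the latter by hypothesis~(ii) together with the fact that tangent cones are always cones), so Lemma~\ref{lem_vol} yields, for every $d > 1$ and every positive width constant $C$,
\begin{equation*}
\vol_n\bigl(ST_d(E,C)\cap B(0,r)\bigr)\sim r^{n+(d-1)(n-e)},\qquad
\vol_n\bigl(ST_d(C_0(F),C)\cap B(0,r)\bigr)\sim r^{n+(d-1)(n-f)}.
\end{equation*}
Meanwhile, hypothesis~(i) combined with Lemma~\ref{lem_8} supplies a threshold $d_1 > 1$ such that, for every $d' \in (1, d_1]$, the sea-tangle volumes of $F$ and of $C_0(F)$ at degree $d'$ are equivalent up to constants.

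The key step is to couple the parameters by setting $d := \alpha^2 d_1$, so that $d/\alpha^2 = d_1$ and $d > 1$ whenever $\alpha > 1/\sqrt{d_1}$. Lemma~\ref{lem_4} then provides an inclusion $ST_{d_1}(F, C_1') \subset \varphi(ST_d(E, C))$ for some $C_1' > 0$. Since $E$ has positive dimension, $ST_d(E, C)$ has full dimension $n$ at $0$, so Lemma~\ref{lem_3} gives
\begin{equation*}
\vol_n\bigl(\varphi(ST_d(E,C))\cap B(0,r)\bigr)\ \lesssim\ r^{(1/\alpha+1)(\alpha-1)n}\,\vol_n\bigl(ST_d(E,C)\cap B(0,r)\bigr).
\end{equation*}
Chaining the four volume estimates and using the identity $(1/\alpha+1)(\alpha-1) = \alpha - 1/\alpha$ gives $r^{n + (d_1 - 1)(n - f)} \lesssim r^{n + (\alpha - 1/\alpha)n + (\alpha^2 d_1 - 1)(n - e)}$ for all small $r > 0$, which forces the exponent inequality
\begin{equation*}
(d_1 - 1)(n - f) \ \geq\ (\alpha - 1/\alpha)\,n + (\alpha^2 d_1 - 1)(n - e). \tag{$\star$}
\end{equation*}

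At $\alpha = 1$, $(\star)$ reads $(d_1 - 1)(n - f) \geq (d_1 - 1)(n - e)$, i.e.\ $e \geq f$, contradicting $f > e$. Since both sides of $(\star)$ depend continuously on $\alpha$, this strict violation persists on an interval $[\alpha_0, 1]$ with $\alpha_0 > 1/\sqrt{d_1}$; any such $\alpha_0$ yields the desired contradiction.

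The main obstacle is the coordination of three separate exponents---the sea-tangle degree $d$, the H\"older correction factor $r^{(\alpha - 1/\alpha)n}$ from Lemma~\ref{lem_3}, and the threshold $d_1$ from Lemma~\ref{lem_8}---through the precise choice $d = \alpha^2 d_1$. This choice is exactly what places the inner sea-tangle inclusion of Lemma~\ref{lem_4} at the upper boundary of the range in which Lemma~\ref{lem_8} is valid, so that the limit $\alpha \to 1$ washes out the H\"older distortion and reduces~$(\star)$ to the clean cone-volume comparison that is governed by the dimensions $e$ and $f$ alone.
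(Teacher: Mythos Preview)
Your proof is correct and follows essentially the same approach as the paper: both arguments combine Lemma~\ref{lem_8} (volume equivalence of $ST_d(F)$ and $ST_d(C_0(F))$), Lemma~\ref{lem_4} (nesting of sea-tangles under $\varphi$), Lemma~\ref{lem_3} (H\"older volume distortion), and Lemma~\ref{lem_vol} (explicit asymptotics for cones) to produce an exponent inequality that fails for $\alpha$ near~$1$ when $f>e$. The only cosmetic difference is bookkeeping: the paper fixes a degree $d<d_1$ and works with $d\alpha^2$ on the $E$-side, whereas you set $d=\alpha^2 d_1$ so that the $F$-side degree lands exactly at $d_1$; after the change of variables $d\leftrightarrow d\alpha^2$ the two computations coincide.
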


\begin{proof}
Since $C_0(F)$ and $F$ are germs at $0 \in \mathbb{R}^n$ and are ST-equivalent, by Lemma \ref{lem_8}, there exist $d_1 > 0$ such that   
\begin{equation}
    \vol_n\bigg(ST_d(F, C_1) \cap B(0, r)\bigg)
\sim 
\vol_n\bigg(ST_d(C_0(F), C_2) \cap B(0, r)\bigg).
\end{equation}
for any $C_1, C_2 > 0$ and any $1< d < d_1$ and $r$ sufficiently small. We now fix $1< d < d_1$. Choose $0< \alpha_0 < 1$ such that $d \alpha_0^2 >1$. This implies that for all $\alpha_0 \leq\alpha \leq 1$ we have $1< d\alpha^2 < d$.

Fix a constant $C_3>0$, from Lemma \ref{lem_4}, there is  $C_4 >0$ such that 
$$ ST_d(F, C_3) \subset \varphi(ST_{d\alpha^2}(E, C_4)).$$
It follows that 
\[
\vol_n(ST_d(F, C_3) \cap B(0, r))
    \leq 
    \vol_n(\varphi(ST_{d\alpha^2}(E, C_4)) \cap B(0, r)).
\]
By Lemma \ref{lem_3}, there exists $r_0 > 0$ such that for all $0 < r \le r_0$,
\begin{equation}
    \vol_n \bigg(\varphi(ST_{d\alpha^2}(E, C_4)) \cap  B(0, r)\bigg)
    \lesssim 
    r^{(\frac{1}{\alpha}+1)(\alpha-1)n}
    \vol_n \bigg(ST_{d\alpha^2}(E, C_4) \cap  B(0, r)\bigg).
\end{equation}

Since $E$ is a definable cone at $0$,  by Lemma \ref{lem_vol},
\[
\vol_n(ST_{d\alpha^2}(E, C_4) \cap  B(0, r))
    \sim
    r^{(\alpha^2 - 1)d(n - a)}
    \vol_n(ST_d(E, C_4) \cap  B(0, r))
\]
where $a = \dim E$. 
Combining these estimates gives
\[
\vol_n(ST_d(F, C_3) \cap B(0, r))
    \lesssim r^K \vol_n(ST_d(E, C_4) \cap  B(0, r)),
\]
where
$$
K = K(\alpha) := (\frac{1}{\alpha}+1)(\alpha - 1)n
    + (\alpha^2 - 1)d(n - a).
$$
Note that $K< 0$ and $\lim_{\alpha\to 1} K = 0$. Taking $\alpha_0$ bigger, we may assume that $K + (d-1)>0$ for all $\alpha\geq \alpha_0$. 

Observe that 
\begin{equation}\label{equ_main4}
    1  \sim 
    \frac{\vol_n(ST_d(F, C_3) \cap B(0, r))}{
         \vol_n(ST_d(C_0(F), C_3) \cap B(0, r))}
    \lesssim
    r^K 
    \frac{\vol_n(ST_d(E, C_4) \cap B(0, r))}
         {\vol_n(ST_d(C_0(F), C_3) \cap B(0, r))}.
\end{equation}
Now assume on the contradiction that $\dim C_0(F) > \dim E$. By Lemma \ref{lem_vol}, 
$$
\frac{\vol_n(ST_d(E, C_4) \cap B(0, r))}
     {\vol_n(ST_d(C_0(F), C_3) \cap B(0, r))} \sim r^{(d-1)(\dim C_0(F) - \dim E)}.
$$
It then follows from \eqref{equ_main4} that 
\begin{equation}\label{lem_main5}
    1 \lesssim r^{K + (d-1)(\dim C_0(F) - \dim E)}
\end{equation}
Since $\dim C_0(F) - \dim E \geq 1$ and $K + (d-1) >0$,  $K + (d-1)(\dim C_0(F) - \dim E)>0$. The right hand side of  \eqref{lem_main5} tends to $0$ as $r\to 0$, which is a contradiction.  Therefore,
$$
\dim C_0(F) \le \dim E.
$$
\end{proof}

\begin{thm}\label{thm_dim}
Let $X, Y \subset \mathbb{R}^n$ be definable germs at $0$. 
Then, there exists $\alpha_0 \in (0,1)$ such that if there is  a bi-$\alpha$-H\"older homeomorphism 
$\varphi : (\mathbb{R}^n, 0) \to (\mathbb{R}^n, 0)$ with $\varphi(X) = Y$ 
for some $\alpha \in [\alpha_0, 1]$, then
\[
\dim C_0(X) = \dim C_0(Y).
\]
\end{thm}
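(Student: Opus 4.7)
The plan is to reduce Theorem~\ref{thm_dim} to the one-sided inequality supplied by Lemma~\ref{main lemma}, applied not to $X$ itself but to the definable cone $E := C_0(X)$; the full equality will then follow by symmetry. First I would choose $\alpha_0 \in (0,1)$ large enough that both Lemma~\ref{lem_7} and Lemma~\ref{main lemma} are simultaneously available --- each imposes a lower bound of the form ``$\alpha$ close to $1$'', so taking the maximum of the two thresholds suffices.

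With $E := C_0(X)$ and $F := \varphi(E)$, Lemma~\ref{lem_7} immediately gives the ST-equivalence $Y = \varphi(X) \sim_{ST} \varphi(C_0(X)) = F$, together with the identity $C_0(F) = C_0(\varphi(X)) = C_0(Y)$. Since $Y$ is definable, Lemma~\ref{lem_5} yields $Y \sim_{ST} C_0(Y)$; a short unpacking of the definition of sea-tangle neighborhoods shows that $\sim_{ST}$ is transitive (the triangle inequality together with $\min(d,d') > 1$ does the job). Chaining $F \sim_{ST} Y \sim_{ST} C_0(Y) = C_0(F)$ verifies hypothesis~(i) of Lemma~\ref{main lemma}, while hypothesis~(ii) holds because $C_0(F) = C_0(Y)$ is definable. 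The lemma then delivers the key inequality $\dim C_0(Y) = \dim C_0(F) \le \dim E = \dim C_0(X)$.

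For the reverse inequality I would exchange the roles of $X$ and $Y$ and apply the same argument to $\varphi^{-1}$. A routine manipulation of the two defining inequalities for a bi-$\alpha$-H\"older map shows that $\varphi^{-1}$ is itself bi-$\alpha$-H\"older (with H\"older constant $\max(L^{1/\alpha}, L^\alpha)$), so rerunning the above with $X$ and $Y$ swapped gives $\dim C_0(X) \le \dim C_0(Y)$, and equality follows. The real technical obstacle --- the volume comparison powering the dimension bound --- has already been absorbed into Lemma~\ref{main lemma}; at the level of Theorem~\ref{thm_dim} the only conceptual step is to feed Lemma~\ref{main lemma} the \emph{cone} $C_0(X)$ paired with $\varphi$, rather than $X$ itself, so that the ``definable cone'' hypothesis is met. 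The rest is bookkeeping: checking that $F$ is ST-equivalent to its own tangent cone (automatic through $Y$) and that the threshold $\alpha_0$ can be taken uniform.
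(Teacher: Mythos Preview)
Your proposal is correct and follows essentially the same approach as the paper: set $E=C_0(X)$, $F=\varphi(E)$, use Lemma~\ref{lem_7} to get $\varphi(X)\sim_{ST} F$ and $C_0(F)=C_0(Y)$, combine with Lemma~\ref{lem_5} and transitivity to verify the hypotheses of Lemma~\ref{main lemma}, obtain $\dim C_0(Y)\le\dim C_0(X)$, and then swap via $\varphi^{-1}$. Your write-up is in fact slightly more explicit than the paper's on two points (transitivity of $\sim_{ST}$ and why $\varphi^{-1}$ is again bi-$\alpha$-H\"older); just be aware that the threshold $\alpha_0$ must absorb four separate constraints (Lemma~\ref{lem_7} and Lemma~\ref{main lemma} in each direction), not two.
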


\begin{proof}
We first show that 
\[
\dim C_0(Y) \leq \dim C_0(X).
\]
Set $E := C_0(X)$ and $F := \varphi(E)$. 
We will verify that the assumptions of Lemma~\ref{main lemma} are satisfied. 
By Lemma~\ref{lem_7}, there exists $\alpha_1$ such that if  $\alpha \in [\alpha_1, 1]$ then

(i) $\varphi(X) = Y \sim_{ST} \varphi(C_0(X)) = F$

(ii) $C_0 (Y) = C_0(F)$

By Lemma~\ref{lem_5} we have  (iii) $C_0(Y) \sim_{ST} Y$. By the transitivity of the ST-equivalence relation, we obtain $F \sim_{ST} C_0(F)$. Since $Y$ is definable, by (ii), $C_0(F) = C_0(Y)$ is also definable. Obviously, all assumptions in Lemma~\ref{main lemma} are satisfied, so 
\[
\dim C_0(Y) 
    = \dim C_0(F)
    \leq \dim E
    = \dim C_0(X).
\]
Repeating the same argument for $\varphi^{-1}$, there exists $\alpha_2 \in (0, 1)$ such that if  $\alpha \in [\alpha_2, 1]$ then
\[
\dim C_0(Y) \geq \dim C_0(X).
\]
Finally, setting $\alpha_0 := \max\{\alpha_1, \alpha_2\}$, we conclude that if  $\alpha \in [\alpha_0, 1]$ then 
\[
\dim C_0(X) = \dim C_0(Y).
\]
\end{proof}

The following lemma is well known; see, for example, \cite{Sampaio2025}, where it appears in the proof of Theorem 4.1. For the reader’s convenience, we include a detailed proof here.
\begin{lem}\label{lem_extension}
Let $X$ and $Y$ be germ at $0$ in $\bb R^n$.  Assume there exists a bi-$\alpha$-H\"older homeomorphism $h:(X,0)\to (Y,0)$ with $0<\alpha\le1$. 
Then there exists a bi-$\alpha^2$-H\"older homeomorphism 
\[
\Phi:(\mathbb{R}^{2n},0)\longrightarrow (\mathbb{R}^{2n},0)
\quad\text{such that}\quad 
\Phi\bigl(X\times\{0\}\bigr)=\{0\}\times Y.
\]
\end{lem}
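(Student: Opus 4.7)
The plan is to build $\Phi$ as the composition of two ``shear'' homeomorphisms of $\bb R^{2n} = \bb R^n \times \bb R^n$: one that straightens $X \times \{0\}$ onto the graph of $h$, and a second one that collapses this graph onto $\{0\} \times Y$. To define these shears globally, I first extend $h$ and $h^{-1}$ from $X$ and $Y$ to $\alpha$-H\"older maps $\tilde h, \tilde g : \bb R^n \to \bb R^n$; this is a straightforward application of the McShane--Whitney extension theorem applied componentwise, which preserves both the H\"older exponent $\alpha$ and (up to a factor of $\sqrt{n}$) the H\"older constant. Observe that we do \emph{not} need $\tilde g \circ \tilde h$ to equal the identity anywhere except on $X$.

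Next I would set
\[
\Phi_1(x,y) := (x,\ y + \tilde h(x)), \qquad \Phi_2(x,y) := (x - \tilde g(y),\ y),
\]
both of which are homeomorphisms of $\bb R^{2n}$ with explicit inverses obtained by flipping the signs of $\tilde h$ and $\tilde g$. A direct check gives $\Phi_1(X \times \{0\}) = \{(x,h(x)) : x \in X\}$, the graph of $h$, while $\Phi_2$ sends this graph to $\{0\} \times Y$ because $\tilde g(h(x)) = h^{-1}(h(x)) = x$ for every $x \in X$. Hence $\Phi := \Phi_2 \circ \Phi_1$ satisfies $\Phi(X \times \{0\}) = \{0\} \times Y$ and fixes the origin (since $\tilde h(0) = 0$ and $\tilde g(0) = 0$ by the extensions agreeing with $h, h^{-1}$ at $0$).

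For the H\"older estimate I would restrict to a small ball $B(0,r) \subset \bb R^{2n}$ and use that on a bounded set the linear contributions in $\Phi_1$ are dominated by H\"older ones, via the elementary bound $\|x_1 - x_2\| \le (2r)^{1-\alpha}\,\|x_1 - x_2\|^\alpha$. Combined with the $\alpha$-H\"older estimate for $\tilde h$, this shows $\Phi_1$ is $\alpha$-H\"older on $B(0,r)$, and the same argument applies verbatim to $\Phi_1^{-1}$, $\Phi_2$, and $\Phi_2^{-1}$. Since the composition of two $\alpha$-H\"older maps is $\alpha^2$-H\"older, both $\Phi = \Phi_2 \circ \Phi_1$ and $\Phi^{-1} = \Phi_1^{-1} \circ \Phi_2^{-1}$ are $\alpha^2$-H\"older on a neighborhood of $0 \in \bb R^{2n}$, which is precisely the bi-$\alpha^2$-H\"older condition. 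I do not anticipate any real obstacle: the only non-elementary ingredient is the classical McShane--Whitney extension theorem, and the loss of exponent from $\alpha$ to $\alpha^2$ is the unavoidable cost of composing two H\"older maps.
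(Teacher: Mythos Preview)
Your proof is correct and essentially identical to the paper's: composing your two shears gives $\Phi(x,y) = (x - \tilde g(y + \tilde h(x)),\, y + \tilde h(x))$, which is exactly the map the paper writes down directly, and likewise $\Phi_1^{-1}\circ\Phi_2^{-1}$ coincides with the paper's explicit inverse $\Psi$. The only cosmetic difference is that you factor $\Phi$ as a product of two $\alpha$-H\"older shears (a slightly cleaner way to see the $\alpha^2$ exponent), whereas the paper verifies the $\alpha^2$-H\"older bound for the composite map in one step.
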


\begin{proof}
Write $h=(h_1,\dots,h_n)$ and $h^{-1}=(k_1,\dots,k_n)$. 
There exist $C,L\geq 1$ such that for all $x,x'\in X$ and $y,y'\in Y$,
\[
\|h(x)-h(x')\|\leq C\,\|x-x'\|^\alpha,\qquad 
\|h^{-1}(y)-h^{-1}(y')\|\leq L\,\|y-y'\|^\alpha.
\]

By the McShane–Whitney extension for H\"older functions (see for example \cite{McShane1934}), each coordinate function 
$h_i:X\to\mathbb{R}$ admits an $\alpha$-H\"older extension $\widetilde h_i:\mathbb{R}^n\to\mathbb{R}$; 
 and similarly each $k_j:Y\to\mathbb{R}$ extends to 
$\widetilde k_j:\mathbb{R}^n\to\mathbb{R}$. 
Set $\widetilde h=(\widetilde h_1,\dots,\widetilde h_n):\mathbb{R}^n\to\mathbb{R}^n$ and 
$\widetilde k=(\widetilde k_1,\dots,\widetilde k_n):\mathbb{R}^n\to\mathbb{R}^n$. 
Then $\widetilde h|_X=h$ and $\widetilde k|_Y=h^{-1}$, and there are constants 
$C',L'>\geq 1$ such that
\[
\|\widetilde h(u)-\widetilde h(u')\|\leq C'\|u-u'\|^\alpha,\qquad 
\|\widetilde k(v)-\widetilde k(v')\|\leq L'\|v-v'\|^\alpha
\quad (u,u',v,v'\in\mathbb{R}^n).
\]

Define $\Phi,\Psi:(\mathbb{R}^{2n},0)\to (\mathbb{R}^{2n}, 0)$ by
\[
\Phi(u,v):=\bigl(u-\widetilde k\bigl(v+\widetilde h(u)\bigr),\ v+\widetilde h(u)\bigr),\qquad
\Psi(z,w):=\bigl(z+\widetilde k(w),\ w-\widetilde h\bigl(z+\widetilde k(w)\bigr)\bigr).
\]

\textbf{Claim 1:  $\Psi=\Phi^{-1}$}.

For any $(u,v)\in\mathbb{R}^{2n}$ near the orgin, we have 
\begin{align*}
\Psi\bigl(\Phi(u,v)\bigr)
&=\Psi\bigl(u-\widetilde k(v+\widetilde h(u)),\, v+\widetilde h(u)\bigr)\\
&=\Bigl(u-\widetilde k(v+\widetilde h(u))+\widetilde k\bigl(v+\widetilde h(u)\bigr),\ 
\ (v+\widetilde h(u))-\widetilde h\bigl(u-\widetilde k(v+\widetilde h(u))+\widetilde k(v+\widetilde h(u))\bigr)\Bigr)\\
&=(u,\ v+\widetilde h(u)-\widetilde h(u))=(u,v).
\end{align*}
Similarly, for any $(z,w)\in\mathbb{R}^{2n}$ near the origin,
\begin{align*}
\Phi\bigl(\Psi(z,w)\bigr)
&=\Phi\bigl(z+\widetilde k(w),\, w-\widetilde h(z+\widetilde k(w))\bigr)\\
&=\Bigl(z+\widetilde k(w)-\widetilde k\bigl(w-\widetilde h(z+\widetilde k(w))+\widetilde h(z+\widetilde k(w))\bigr),\ 
 \ w-\widetilde h(z+\widetilde k(w))+\widetilde h(z+\widetilde k(w))\Bigr)\\
&=(z+\widetilde k(w)-\widetilde k(w),\ w)=(z,w).
\end{align*}
Thus $\Psi=\Phi^{-1}$ and $\Phi$ is a homeomorphism.

\medskip
\textbf{Claim 2:} $\Phi$ and $\Psi$ are bi-$\alpha^2$-H\"older.

Fix $(u,v),(u',v')\in\mathbb{R}^{2n}$ near the origin, write $\Phi =  (\Phi_1, \Phi_2) \in \bb R^n \times \bb R^n$. Using the $\alpha$-H\"older property of 
$\widetilde h$ and $\widetilde k$, 
\begin{align*}
\bigl\|\,\Phi_2(u,v)-\Phi_2(u',v')\,\bigr\|
&=\bigl\|\,v+\widetilde h(u)-v'-\widetilde h(u')\,\bigr\|\\
& \leq\ \|v-v'\|+C'\|u-u'\|^\alpha,
\lesssim \|(u,v) - (u', v')\|^{\alpha^2} \text{ since } \alpha \in (0, 1]; 
\end{align*}
and
\begin{align*}
\bigl\|\,\Phi_1(u,v)-\Phi_1(u',v')\,\bigr\|
&=\bigl\|\,u-u' -\widetilde k(v+\widetilde h(u))+\widetilde k(v'+\widetilde h(u'))\,
\bigr\|\\
&\leq \|u-u'\|+L'\,\bigl\|\, (v-v')+(\widetilde h(u)-\widetilde h(u'))\,\bigr\|^\alpha\\
&\leq \|u-u'\|+L'\bigl(\|v-v'\|+C'\|u-u'\|^\alpha\bigr)^\alpha\\
& \lesssim   \|u-u'\|+\|v-v'\|^\alpha + \|u-u'\|^{\alpha^2}.\\
& \lesssim \|(u,v) - (u', v')\|^{\alpha^2}.
\end{align*}
This implies that $\Phi$ is $\alpha^2$-H\"older. The same argument applied to $\Psi$ (symmetry of the formulas) shows that $\Psi$ is $\alpha^2$-H\"older. Therefore, $\Phi$ is bi-$\alpha^2$-H\"older.

\medskip
\textbf{Claim 3:   $\Phi(X\times\{0\}) = \{0\} \times Y$.}

Let $x\in X$. Using the extension properties $\widetilde h|_X=h$ and $\widetilde k|_Y=h^{-1}$, 
\[
\Phi(x,0)=\bigl(x-\widetilde k(\widetilde h(x)),\ \widetilde h(x)\bigr)
=\bigl(x-h^{-1}(h(x)),\ h(x)\bigr)=(0,h(x))\in\{0\}\times Y.
\]
Thus $\Phi(X\times\{0\})=\{0\}\times Y$.

\medskip
This completes the proof.
\end{proof}

\begin{thm}\label{thm_dim2}
Let $X \subset \mathbb{R}^n$ and $Y \subset \mathbb{R}^m$ be definable germs at $0$. Then there exists $\alpha_0 \in (0, 1)$ such that if $(X, 0)$ and $(Y, 0)$ are bi-$\alpha$-H\"older equivalent for some $\alpha \in [\alpha_0, 1]$, then 
\[
\dim C_0(X) = \dim C_0(Y).
\]
\end{thm}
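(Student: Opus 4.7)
The plan is to reduce the general case to Theorem~\ref{thm_dim} by using the extension lemma (Lemma~\ref{lem_extension}), after first placing $X$ and $Y$ into a common ambient Euclidean space. The only subtlety is that the extension step costs us a factor of $\alpha^2$ in the H\"older exponent, which we absorb by raising the threshold.

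First, set $N := \max(n,m)$ and identify $X$ with $\widetilde{X} := X \times \{0\}^{N-n} \subset \mathbb{R}^N$ and $Y$ with $\widetilde{Y} := Y \times \{0\}^{N-m} \subset \mathbb{R}^N$. Padding with zeros preserves Euclidean distances, so any bi-$\alpha$-H\"older homeomorphism $h : (X,0) \to (Y,0)$ induces a bi-$\alpha$-H\"older homeomorphism $\widetilde{h} : (\widetilde{X},0) \to (\widetilde{Y},0)$ with the same constants. Moreover, the tangent cone is unaffected by this embedding: $C_0(\widetilde{X}) = C_0(X) \times \{0\}^{N-n}$, and hence $\dim C_0(\widetilde{X}) = \dim C_0(X)$, and similarly for $Y$.

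Next, apply Lemma~\ref{lem_extension} to the germs $\widetilde{X}, \widetilde{Y} \subset \mathbb{R}^N$: we obtain a bi-$\alpha^2$-H\"older homeomorphism
\[
\Phi : (\mathbb{R}^{2N},0) \longrightarrow (\mathbb{R}^{2N},0)
\quad \text{with} \quad
\Phi\bigl(\widetilde{X} \times \{0\}^N\bigr) = \{0\}^N \times \widetilde{Y}.
\]
Both $\widetilde{X} \times \{0\}^N$ and $\{0\}^N \times \widetilde{Y}$ are definable germs at $0$ in $\mathbb{R}^{2N}$, and again the extra zero coordinates do not affect the dimensions of their tangent cones; these remain $\dim C_0(X)$ and $\dim C_0(Y)$ respectively.

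Now apply Theorem~\ref{thm_dim} to these two germs inside $\mathbb{R}^{2N}$: there exists a threshold $\alpha_0' \in (0,1)$ such that whenever $\alpha^2 \geq \alpha_0'$, the ambient bi-$\alpha^2$-H\"older homeomorphism $\Phi$ forces equality of tangent cone dimensions. Setting $\alpha_0 := \sqrt{\alpha_0'} \in (0,1)$ ensures that for every $\alpha \in [\alpha_0, 1]$ we have $\alpha^2 \in [\alpha_0', 1]$, and therefore
\[
\dim C_0(X) = \dim C_0\bigl(\widetilde{X} \times \{0\}^N\bigr) = \dim C_0\bigl(\{0\}^N \times \widetilde{Y}\bigr) = \dim C_0(Y).
\]
The main conceptual step is the reduction via Lemma~\ref{lem_extension}, which converts an abstract bi-H\"older equivalence between germs into an ambient bi-H\"older self-homeomorphism of a Euclidean space; everything else is bookkeeping about thresholds and the trivial behavior of tangent cones under zero-padding.
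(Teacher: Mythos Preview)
Your proof is correct and follows essentially the same route as the paper: embed both germs into a common ambient $\mathbb{R}^N$, invoke Lemma~\ref{lem_extension} to upgrade the bi-$\alpha$-H\"older equivalence to an ambient bi-$\alpha^2$-H\"older self-homeomorphism of $\mathbb{R}^{2N}$, then apply Theorem~\ref{thm_dim} and take $\alpha_0 = \sqrt{\alpha_0'}$. The only difference from the paper is cosmetic (you use $N=\max(n,m)$ explicitly whereas the paper assumes $n\ge m$ without loss of generality), and your exposition of why zero-padding preserves tangent-cone dimensions is slightly more detailed.
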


\begin{proof}
Without loss of generality, we assume that $n \geq m$. By embedding $Y$ into $\mathbb{R}^n$, we may consider $Y$ as a germ at $0$ in $\mathbb{R}^n$. Let $h : (X, 0) \to (Y, 0)$ be a bi-$\alpha$-H\"older homeomorphism.  
By Lemma~\ref{lem_extension}, there exists a bi-$\alpha^2$-H\"older homeomorphism 
\[
\varphi : (\mathbb{R}^{2n}, 0) \to (\mathbb{R}^{2n}, 0)
\]
such that $\varphi(X \times \{0\}) = \{0\} \times Y$.  
Applying Theorem~\ref{thm_dim}, we obtain $\alpha_1 \in (0, 1)$ such that if $\alpha^2 \geq \alpha_1$, then
\[
\dim C_0(X \times \{0\}) = \dim C_0(\{0\} \times Y).
\]
Since $\dim C_0(X \times \{0\}) = \dim C_0(X)$ and $\dim C_0(\{0\} \times Y) = \dim C_0(Y)$, it follows that
\[
\dim C_0(X) = \dim C_0(Y).
\]
Hence, setting $\alpha_0 := \sqrt{\alpha_1}$ yields the desired constant satisfying the statement of the theorem.
\end{proof}

\section{Homotopy types of the links of the tangent cones}\label{section5}

In this section, we prove a bi-$\alpha$-H\"older rigidity result for the homotopy type of links of tangent cones.

\begin{lem}\label{lem_homotopy}
Let $A\subset \bb R^n$ be a closed definable cone at $0$.  
Then, for any $d > 1, C > 0$,  the inclusion  
\[
\iota : (A, 0) \hookrightarrow (ST_d(A, C), 0),
\]
induces isomorphisms between the homotopy groups of $\Lk(A, 0)$ and $\Lk(ST_d(A, C),0)$.
\end{lem}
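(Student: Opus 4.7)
The plan is to reduce the assertion to a homotopy equivalence of inclusions on the unit sphere $S^{n-1}$, where a definable tubular-neighborhood retract onto $S_A := A \cap S^{n-1}$ is available.

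First, I would use the cone property of $A$ to describe the two links explicitly. Since $\dist(t\sigma,A)=t\,\dist(\sigma,A)$ for all $t\geq 0$ and $\sigma\in S^{n-1}$, one has for every $r>0$
\[
\Lk_r(A,0) = r\,S_A
\qquad\text{and}\qquad
\Lk_r(ST_d(A,C),0) = r\,T_r,
\]
where $T_r:=\{\sigma\in S^{n-1}:\dist(\sigma,A)\leq Cr^{d-1}\}$. Under the homothety $x\mapsto x/r$, the inclusion $\iota$ restricted to the links at radius $r$ becomes the inclusion $S_A\hookrightarrow T_r$. So it suffices to show that this inclusion induces isomorphisms on all homotopy groups for every sufficiently small $r$.

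Because $d>1$, the set $T_r$ is a tube around $S_A$ in $S^{n-1}$ whose width $Cr^{d-1}$ tends to $0$ as $r\to 0$. I would apply Hardt's triviality theorem to the definable continuous function $\delta:S^{n-1}\to[0,\infty)$, $\delta(\sigma):=\dist(\sigma,A)$, with the distinguished fiber $S_A=\delta^{-1}(0)$. This yields $\varepsilon>0$ such that $\delta$ is definably trivial over $(0,\varepsilon)$; then Observation~\ref{Observation1}(2) supplies, for any $0<r_1<r_2<\varepsilon$, a strong deformation retract from $\delta^{-1}([0,r_2])$ onto $\delta^{-1}([0,r_1])$. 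In particular, for $r$ small enough that $Cr^{d-1}<\varepsilon$ one has $T_r=\delta^{-1}([0,Cr^{d-1}])$, and every inclusion $T_{r'}\hookrightarrow T_r$ with $0<r'<r$ is a homotopy equivalence.

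To upgrade this to a homotopy equivalence between $S_A$ itself and $T_r$, I would invoke the existence of a definable neighborhood $V$ of $S_A$ in $S^{n-1}$ which definably deformation retracts onto $S_A$. Such a retract is supplied by the definable triangulation theorem, by realising $(S^{n-1},S_A)$ as a simplicial pair with $S_A$ a subcomplex and taking its regular neighborhood, or alternatively by a Whitney-stratified gradient-flow argument applied to the definable function $\delta$. For $r$ small enough one has $T_r\subset V$, and composing the ambient retract $V\to S_A$ with the sub-tube retracts from the previous paragraph shows that $S_A\hookrightarrow T_r$ is a weak homotopy equivalence. Since both sides are definable, hence have the homotopy type of CW complexes, Whitehead's theorem yields the required isomorphism on all homotopy groups and recovers the statement of the lemma.

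The hardest step is the last one: Hardt's theorem by itself only produces retracts among the nonzero sub-tubes $\delta^{-1}([0,s])$ with $s>0$, and the associated trivialisations generally do not extend continuously to the critical fiber $S_A=\delta^{-1}(0)$. Producing a definable neighborhood of $S_A$ that genuinely retracts onto it—via definable triangulation or a stratified retraction theorem for closed definable subsets of smooth definable manifolds—is therefore the one essential o-minimal ingredient beyond Hardt's theorem, and is where I expect the technical difficulty of a fully rigorous proof to concentrate.
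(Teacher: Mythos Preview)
Your proposal is correct and follows essentially the same route as the paper: reduce to the sphere via the cone identity $\dist(r\sigma,A)=r\,\dist(\sigma,A)$, apply Hardt's theorem to $\delta(\sigma)=\dist(\sigma,A)$ to get deformation retracts among the sub-tubes $\delta^{-1}([0,\varepsilon])$, and then use a triangulation-supplied neighborhood retract of $S_A$ to bridge to $S_A$ itself. The paper makes your final ``composing'' step precise by an explicit sandwich $g^{-1}([0,\varepsilon_1])\subset U_1\subset g^{-1}([0,\varepsilon_2])\subset U_2$ and a short diagram chase on $\pi_*$, which is exactly what your sketch needs to become rigorous; note also that Whitehead's theorem is unnecessary here, since the lemma only asserts isomorphisms on homotopy groups.
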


\begin{proof}
Let $S_A:=A\cap S^{n-1}$ and define the definable continuous function
\[
g:S^{n-1}\to \mathbb{R}_{\ge 0},\qquad g(u)=\operatorname{dist}(u,A).
\]
Since $A$ is a cone, $\operatorname{dist}(ru,A)=r\,\operatorname{dist}(u,A)=r\,g(u)$ for all $r>0$, $u\in S^{n-1}$.
Hence, with $\varphi_r:S(0, r)\to S^{n-1}$, $\varphi_r(x)=x/r$, we have
\[
ST_d(A, C) \cap \bb \bb S(0, r)=\{ru: u \in S^{n-1}, \ g(u)\le C r^{d-1}\}=\varphi_r^{-1}\!\big(g^{-1}([0,Cr^{d-1}])\big).
\]

By Hardt’s triviality theorem, there exists $\delta>0$ such that $g$ is a definably trivial over $(0,\delta)$.
Using Observation \ref{Observation1} (2) we see that  for $0<\varepsilon_1<\varepsilon_2<\delta$,  there is a strong deformation retract from $g^{-1}([0,\varepsilon_2]$ onto $g^{-1}([0,\varepsilon_1])$, hence the inclusion
$g^{-1}([0,\varepsilon_1])\hookrightarrow g^{-1}([0,\varepsilon_2])$ induces isomorphisms on all homotopy groups.

Since $S_A$ is a compact definable set, it is triangulable; thus there exist arbitrarily small open neighborhood $U$ admitting a definable deformation retraction onto $S_A$.
By continuity of $g$ and compactness of $S_A$, we may choose two such neighborhoods  $U_1 \subset U_2$ and 
$
0<\varepsilon_1<\varepsilon_2<\delta$ such that 
$$
g^{-1}([0,\varepsilon_1])\subset U_1\subset g^{-1}([0,\varepsilon_2])\subset U_2.
$$

It is clear that the inclusion maps from $\iota_1: g^{-1}([0, \varepsilon_1])\to g^{-1}([0, \varepsilon_2])$ and $\iota_2: U_1 \to U_2$ induce isomorphisms between corresponding homotopy groups. 

Consider the following commutative diagram induced from inclusion maps: 

$$
\begin{tikzcd}
\pi_*(g^{-1}([0, \varepsilon_1]) \arrow[r, "\iota_{1*}"] \arrow[dr] & \pi_*(g^{-1}([0, \varepsilon_2])  \arrow[dr] & \\
\pi_*(S_A)\arrow[u]\arrow[r, "\iota_{0*}"]  & \pi_*(U_1) \arrow[u] \arrow[r, "\iota_{2*}"] & \pi_*(U_2)
\end{tikzcd}
$$

Since $\iota_{i*}$, $i = 0, 1, 2$ are isomorphisms, so all morphisms in the diagram are isomorphic. This implies that the homotopy groups of $S_A$ and $g^{-1}([0, \varepsilon])$
are isomorphic for all $\varepsilon< \delta$. 
Conjugating by the homeomorphism
$\varphi_r:S(0,r)\to S^{n-1}$ shows that
\[
A \cap \bb \bb S(0, r)=\varphi_r^{-1}(S_A)\ \hookrightarrow\ ST_d(A, C) \cap \bb \bb S(0, r) =\varphi_r^{-1}\big(g^{-1}([0,Cr^{d-1}])\big)
\]
induces isomorphisms on all homotopy groups for sufficiently small $r>0$ provided $Cr^{d-1} < \delta$.
This completes the proof.
\end{proof}

\begin{prop}\label{prop_homotopy}
Let $(X,0)$ and $(Y,0)$ be closed definable germs in $\mathbb{R}^n$.  
For every $C>0$ there exists $d_0>1$ and $\alpha_0\in (0,1)$ such that  
if $(X,0)$ and $(Y,0)$ are embedded bi-$\alpha$-Hölder equivalent for some $\alpha\in[\alpha_0,1]$,  
then for every $d\in (1, d_0)$ the links $
\Lk(ST_d(X,C),0)$ and $
\Lk(ST_d(Y,C),0)$
have the same homotopy type.
\end{prop}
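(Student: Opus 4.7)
The plan is to reduce the statement to the tangent cones: I will show that $\Lk(ST_d(X,C),0)$ has the same homotopy groups as $\Lk(C_0(X),0)$ and, symmetrically, $\Lk(ST_d(Y,C),0)$ has the same homotopy groups as $\Lk(C_0(Y),0)$ for every $d$ close to~$1$, and then use $\varphi$ together with Lemma~\ref{lem_4} and a chain of sandwich inclusions to deduce $\pi_*(\Lk(C_0(X),0))\cong\pi_*(\Lk(C_0(Y),0))$.

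The first ingredient is an auxiliary claim (call it Lemma~A): for any closed definable germ $Z$ at $0$ there exists $d_Z>1$ such that for every $d\in(1,d_Z)$ and every $C>0$, $\Lk(ST_d(Z,C),0)$ and $\Lk(C_0(Z),0)$ have isomorphic homotopy groups; moreover every set-theoretic inclusion between ST-neighborhoods of $Z$ or of $C_0(Z)$ (with exponents in $(1,d_Z)$ and suitable widths) induces isomorphisms on the homotopy of the link. To prove Lemma~A I will apply Lemma~\ref{lem_5} to obtain $d_Z$ with $Z\sim_{ST}C_0(Z)$; a triangle-inequality computation then yields $ST_d(Z,C)\subset ST_d(C_0(Z),C')$ and $ST_d(C_0(Z),C'')\subset ST_d(Z,C''')$ for all $d<d_Z$, and iterating produces a long chain alternating $Z$- and $C_0(Z)$-terms. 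Since $C_0(Z)$ is a closed definable cone, Lemma~\ref{lem_homotopy} combined with Hardt's triviality applied to $u\mapsto\dist(u,C_0(Z))$ on $S^{n-1}$ shows that every inclusion between $C_0(Z)$-ST-neighborhoods (varying both exponent and width in the admissible range) is an isomorphism on the link. A chain-sandwich algebraic lemma---if $f_1,\dots,f_N$ is a chain of homomorphisms of length $N\ge 3$ whose every two-step composition $f_{i+1}f_i$ is an isomorphism, then each $f_i$ is also an isomorphism---then promotes this to an isomorphism on every one-step inclusion in the chain.

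For the main argument, applying Lemma~\ref{lem_4} to $\varphi$ gives the sandwich
\[
ST_{e/\alpha^2}(Y,A_1)\subset\varphi(ST_e(X,C))\subset ST_{e\alpha^2}(Y,A_2),
\]
and applying it to $\varphi^{-1}$ and then pushing forward by $\varphi$ gives
\[
\varphi(ST_{f/\alpha^2}(X,B_1))\subset ST_f(Y,C)\subset\varphi(ST_{f\alpha^2}(X,B_2)).
\]
I will choose $d_0>1$ and $\alpha_0<1$ small enough that for every $d\in(1,d_0)$ and $\alpha\in[\alpha_0,1]$ all exponents $d\alpha^{2k}$ with $|k|\le 3$ lie in $(1,\min(d_X,d_Y))$. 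Writing $Y_e:=ST_e(Y,\cdot)$ and $\widetilde X_e:=\varphi(ST_e(X,\cdot))$, with widths chosen iteratively so that the successive inclusions really hold, the two sandwich families interleave into the five-term chain
\[
Y_{d/\alpha^2}\subset\widetilde X_d\subset Y_{d\alpha^2}\subset\widetilde X_{d\alpha^4}\subset Y_{d\alpha^6}.
\]
Passing to homotopy of punctured sets (which by Theorem~\ref{thm_Valette_retraction} equals link homotopy for definable germs at $0$), every two-step $Y$-to-$Y$ composition is an isomorphism on $\pi_*$ by Lemma~A for $Y$, and every two-step $\widetilde X$-to-$\widetilde X$ composition corresponds, via the homeomorphism $\varphi$, to an inclusion of $X$-ST-neighborhoods and hence is an isomorphism on $\pi_*$ of the punctured set by Lemma~A for $X$. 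The chain-sandwich algebra then yields an isomorphism on each single inclusion, in particular on $\widetilde X_d\hookrightarrow Y_{d\alpha^2}$, giving $\pi_*(\Lk(C_0(X),0))\cong\pi_*(\Lk(C_0(Y),0))$; combining once more with Lemma~A delivers the desired isomorphism $\pi_*(\Lk(ST_d(X,C),0))\cong\pi_*(\Lk(ST_d(Y,C),0))$.

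The main obstacle I anticipate is that $\widetilde X_e=\varphi(ST_e(X,\cdot))$ is in general not definable (since $\varphi$ need not be definable), so Theorem~\ref{thm_Valette_retraction} does not apply to it directly; its punctured-set homotopy must instead be recovered by pulling back through the homeomorphism $\varphi$ and applying Theorem~\ref{thm_Valette_retraction} to the definable set $ST_e(X,\cdot)$, and one must verify that this identification is compatible with the set-theoretic inclusions in the chain. A secondary piece of bookkeeping is the iterative choice of the width constants so that consecutive $Y$- and $\widetilde X$-terms really nest as claimed.
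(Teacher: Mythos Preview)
Your overall architecture—build a sandwich chain of ST-neighborhoods via Lemma~\ref{lem_4} and then apply a chain-sandwich lemma—is exactly the paper's strategy, and your handling of the non-definability of $\widetilde X_e$ (pull back through $\varphi$) is correct. The genuine gap is in your proof of Lemma~A.

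Your chain-sandwich lemma is stated correctly: if \emph{every} two-step composite $f_{i+1}f_i$ is an isomorphism, then so is each $f_i$. But in your alternating chain $A_0\subset B_0\subset A_1\subset B_1\subset A_2\subset\cdots$ (with $A_i$ the $C_0(Z)$-terms and $B_i$ the $Z$-terms), Lemma~\ref{lem_homotopy} only gives you that the $A_i\to A_{i+1}$ composites are isomorphisms; the $B_i\to B_{i+1}$ composites are precisely the $Z$-to-$Z$ inclusions you are trying to prove are isomorphisms. Knowing only the alternate two-step composites is not enough: the chain $\mathbb Z\hookrightarrow\mathbb Z^2\twoheadrightarrow\mathbb Z\hookrightarrow\mathbb Z^2\twoheadrightarrow\mathbb Z$ (first-coordinate inclusions and projections) has every $\mathbb Z\to\mathbb Z$ composite equal to the identity, yet no single arrow is an isomorphism. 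So your bootstrap from the cone case to the general case does not close.

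The paper sidesteps this entirely. It never routes through $C_0(Z)$ in the proof of the proposition. Instead it introduces, for each $Z\in\{X,Y\}$ and for the fixed width $C$, the function $\rho_Z(x)=\sup\{d\in(1,2]:\dist(x,Z)\le C\|x\|^d\}$, observes that $\rho_Z$ is definable in an exponential o-minimal expansion, and applies Hardt triviality to $\rho_Z$ to obtain a genuine strong deformation retract from $ST_d(Z,C)\setminus\{0\}$ onto $ST_{d'}(Z,C)\setminus\{0\}$ for all $1<d<d'<\delta_Z$. This gives the $Z$-to-$Z$ isomorphisms directly—but only for the \emph{same} width $C$. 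To make the main chain live entirely at width $C$, the paper absorbs the stray constants coming from Lemma~\ref{lem_4} into small perturbations of the exponent (e.g.\ replacing $ST_{d/\alpha^2}(Y,C_1)$ by the smaller $ST_{d/\alpha^2+(1-\alpha)}(Y,C)$), so that the final four-term chain
\[
\varphi\bigl(ST_{d_3}(X,C)\bigr)\subset ST_{d_1}(Y,C)\subset \varphi\bigl(ST_d(X,C)\bigr)\subset ST_{d_2}(Y,C)
\]
has both two-step composites (one $X$-side, one $Y$-side) equal to same-width inclusions, hence isomorphisms by the $\rho_X$, $\rho_Y$ deformation retracts. Your ``secondary bookkeeping'' of iteratively chosen widths is thus not merely bookkeeping: without the exponent-absorption trick, the varying widths force you back onto the unproven part of Lemma~A.
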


\begin{proof}

Fix $C>0$.  For a definable germ $Z\subset\mathbb{R}^n$ (with $Z=X$ or $Z=Y$), set
\[
\Omega_Z := \{x\in B(0,1): \dist(x,Z)\le C\|x\|^d \text{ for some } d\in(1,2]\},
\]
and define
\[
\rho_Z(x) := \sup\{\,d\in(1,2] : \dist(x,Z)\le C\|x\|^d\,\}, \qquad x\in\Omega_Z.
\]
For $x\in\Omega_Z\setminus Z$, the function
\[
q_Z(x):=\frac{\log(\dist(x,Z)/C)}{\log\|x\|}
\]
is continuous. 
Since
\[
\rho_Z(x)=\min\{2,q_Z(x)\},
\]
$\rho_Z$ is continuous on $\Omega_Z\setminus Z$.

Although $\Omega_Z$ need not be definable in the original structure, it is definable in an exponential o-minimal expansion (see \cite{MS2002}).  In particular, $\rho_Z$ is definable on the same structure as $\Omega_Z$. Applying Hardt’s triviality theorem to $\rho_Z|_{\Omega_Z\setminus Z}$, we obtain $\delta_Z>1$ such that $\rho_Z$ is definably trivial over $(1,\delta_Z)$.  
By Observation~\ref{Observation1},  for any 
$1 < d < d' < \delta _Z$, there exist strong deformation retract from $(\rho_Z|_{\Omega_Z\setminus Z})^{-1}([d, \delta_Z]) = ST_d(Z, C)\setminus X$ onto $
(\rho_Z|_{\Omega_Z\setminus Z})^{-1}([d', \delta_Z]) = ST_{d'}(Z, C) \setminus Z$.  Since $Z \subset ST_d'(Z, C)$, we may extend this deformation retract by defining it to be the identity on $Z$. This yields a strong deformation retract from $ST_d(Z, C)$ onto $ST_{d'}(Z, C)$ that fixes $Z$, hence fixes $0$ as well. Consequently, it induces a strong deformation retract from $ST_{d}(Z, C) \setminus\{0\}$ onto $ST_{d'}(Z, C) \setminus\{0\}$ ({$\dagger$}).

Let $\varphi: (\bb R^n, 0) \to (\bb R^n, 0) $ be a bi-$\alpha$-H\"older homeomorphism such that $\varphi(X) = Y$. Set $d_0 :=  \min\{\delta_X, \delta_Y\}$ and  fix an $d \in (1, d_0)$.  It follows from Lemma \ref{lem_4} that there are $C_1, C_2 >0$ such that 
\begin{equation}
      ST_{d/\alpha^2} (Y, C_1) \subset  \varphi(ST_d(X, C)) \subset ST_{d\alpha^2} (Y, C_2)
\end{equation}
Set $d_1 = d_1(\alpha): = d/\alpha^2 + (1-\alpha) > d/\alpha^2$, $d_2  = d_2(\alpha) := d\alpha^2 - (1-\alpha) < d\alpha^2$. There is $\alpha_0 <1$ close to $1$ such that for all  $\alpha_0 \leq \alpha <1$, both $d_1$ and $d_2$ are greater than $1$. Since $ST_{d_1} (Y, C) \subset  ST_{d/\alpha^2} (Y, C_1)$  and $ST_{d\alpha^2} (Y, C_2) \subset ST_{d_2} (Y, C)$,  we have 
\begin{equation}\label{equ_3.1}
    ST_{d_1} (Y, C) \subset  \varphi(ST_d(X, C)) \subset ST_{d_2} (Y, C).
\end{equation}
Again by  Lemma \ref{lem_4} there is $C_3>0$ such that $$\varphi(ST_{d_1/\alpha^2} (X, C_3)) \subset ST_{d_1}(Y, C).$$
Choosing $d_3 = d_3(\alpha):= d_1/\alpha^2 + (1-\alpha) >  d_1/\alpha^2$, we have 
$$
    ST_{d_3} (X, C) \subset ST_{d_1/\alpha^2} (X, C_3).
$$
Thus, 
\begin{equation}\label{equ_3.2} \varphi(ST_{d_3} (X, C)) \subset ST_{d_1}(Y, C)\end{equation}
Combining \eqref{equ_3.1} and \eqref{equ_3.2} gives
$$\varphi(ST_{d_3} (X, C)) \subset  ST_{d_1} (Y, C) \subset  \varphi(ST_d(X, C)) \subset ST_{d_2} (Y, C).$$
Note that $d_3 > d_1> d_2$ all tend to $d$ as $\alpha$ tends to $1$. Take $\alpha_0$ close enough to $1$ such that for all $\alpha \in [\alpha_0, 1]$, $d_i\in (1, d_0)$ for $i = 1, 2, 3$.  Consider the following commutative diagram induced by inclusion maps: 

$$
\begin{tikzcd}
\pi_*\bigg( \varphi( ST_{d_3}(X, C) \setminus\{0\})\bigg) \arrow[r, "\iota_{1*}"] \arrow[dr] & \pi_*\bigg(\varphi(ST_{d}(X, C) \setminus\{0\})\bigg)  \arrow[dr] & \\
& \pi_*\bigg(ST_{d_1}(Y, C) \setminus\{0\}\bigg) \arrow[u] \arrow[r, "\iota_{2*}"] & \pi_*\bigg(ST_{d_2}(Y, C) \setminus\{0\}\bigg)
\end{tikzcd}
$$
Since $\iota_{i,*}$, $i = 1, 2$ are isomorphic, so are the others in the diagram. As  $\varphi$ is a homeomorphism, it follows that the homotopy groups of $ST_{d_3}(X, C) \setminus\{0\}$ and $ST_{d_1}(Y, C) \setminus\{0\}$ are isomorphic. Together with ($\dagger$), we see that the homotopy types of $ST_{d'}(X, C) \setminus\{0\}$ and $ST_{d''}(Y, C) \setminus\{0\}$ are the same for all $d', d'' \in (1, d_0)$.  It is known that $ST_{d}(Z, C) \setminus\{0\}$, $Z= X, Y$ can always deformation retracts onto $\Lk(ST_d(Z, C), 0)$. Therefore, $\Lk(ST_d(X, C), 0)$ and $\Lk(ST_d(Y, C), 0)$ are of the same homotopy type. 

Choose $\alpha_0 = \alpha_0(d)$ for any $d$. We see that $\alpha_0$ satisfies the conclusion of the proposition. The proof is complete. 
\end{proof}

By the same trick, we have
\begin{prop}\label{prop_homotopy2} Let $X\subset \bb R^n$ be a closed definable germ at $0$. For every $C>0$, there exists constant $d_0>1$ such that for every $0< d< d_0$ the homotopy types of  $\Lk(ST_d(X, C),0)$ and $\Lk(ST_d(C_0(X), C),0)$ are the same.
\end{prop}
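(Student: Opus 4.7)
The plan is to mimic the argument of Proposition~\ref{prop_homotopy}, replacing the role of the bi-$\alpha$-H\"older homeomorphism by the ST-equivalence between $X$ and its tangent cone provided by Lemma~\ref{lem_5}.

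First, apply Lemma~\ref{lem_5} to obtain constants $d_1', d_2' > 1$ and $K_1, K_2 > 0$ such that $X \subset ST_{d_1'}(C_0(X), K_1)$ and $C_0(X) \subset ST_{d_2'}(X, K_2)$ as germs at $0$. A routine triangle-inequality argument (using that for $x_0 \in X$ nearest to $x$ one has $\|x_0\| \le 2\|x\|$ on a small neighborhood) shows that, for every $d \in (1, \min(d_1', d_2'))$ and every $C > 0$, there exists $K > 0$ such that
\[
ST_d(X, C) \subset ST_d(C_0(X), C + K) \quad\text{and}\quad ST_d(C_0(X), C) \subset ST_d(X, C + K).
\]
A second elementary observation absorbs the enlarged constant at the cost of a small exponent decrement: for any closed germ $Z$, any $C' > C > 0$, and any $\epsilon > 0$, the germ inclusion $ST_d(Z, C') \subset ST_{d-\epsilon}(Z, C)$ holds, since $C'\|x\|^d \le C\|x\|^{d-\epsilon}$ whenever $\|x\| \le (C/C')^{1/\epsilon}$.

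Next, replay the Hardt-triviality step from the proof of Proposition~\ref{prop_homotopy}, applied separately to the functions $\rho_X$ and $\rho_{C_0(X)}$. This yields thresholds $\delta_X, \delta_{C_0(X)} > 1$ with the property that for each $Z \in \{X, C_0(X)\}$ and each pair $1 < d' < d'' < \delta_Z$, the inclusion $ST_{d''}(Z, C) \setminus \{0\} \hookrightarrow ST_{d'}(Z, C) \setminus \{0\}$ is a homotopy equivalence. Set $d_0 := \min(d_1', d_2', \delta_X, \delta_{C_0(X)})$, fix $d \in (1, d_0)$, and choose $\epsilon > 0$ small enough that $d - 3\epsilon > 1$. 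Chaining the two inclusion types above produces
\[
ST_d(X, C) \subset ST_{d-\epsilon}(C_0(X), C) \subset ST_{d-2\epsilon}(X, C) \subset ST_{d-3\epsilon}(C_0(X), C),
\]
with every exponent in the common triviality window $(1, d_0)$.

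After deleting $\{0\}$, the length-two compositions $ST_d(X,C)\setminus\{0\} \to ST_{d-2\epsilon}(X,C)\setminus\{0\}$ and $ST_{d-\epsilon}(C_0(X),C)\setminus\{0\} \to ST_{d-3\epsilon}(C_0(X),C)\setminus\{0\}$ are homotopy equivalences by the Hardt-triviality step. A standard two-out-of-three argument in the resulting four-term chain then forces each individual inclusion, in particular $ST_d(X,C)\setminus\{0\} \hookrightarrow ST_{d-\epsilon}(C_0(X),C)\setminus\{0\}$, to induce isomorphisms on all $\pi_*$. One more application of Hardt triviality gives $\pi_*(ST_{d-\epsilon}(C_0(X), C) \setminus \{0\}) \cong \pi_*(ST_d(C_0(X), C) \setminus \{0\})$, and, exactly as at the end of the proof of Proposition~\ref{prop_homotopy}, the punctured sea-tangle $ST_d(Z,C)\setminus\{0\}$ deformation retracts onto its link $\Lk(ST_d(Z,C),0)$ for each $Z$; combining these gives the desired isomorphism of homotopy groups. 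The main obstacle is purely bookkeeping: one must simultaneously manage the exponent decrements, the inflated constants, and the shrinking germ neighborhoods so that all four exponents in the chain stay within the common window $(1, d_0)$ throughout the argument.
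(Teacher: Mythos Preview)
Your proposal is correct and follows essentially the same approach as the paper: both use Hardt triviality on $\rho_X$ and $\rho_{C_0(X)}$ to make inclusions between sea-tangle neighborhoods of different exponents into homotopy equivalences, then exploit the ST-equivalence of $X$ and $C_0(X)$ (Lemma~\ref{lem_5}) to build a four-term inclusion chain and run the same diagram chase as in Proposition~\ref{prop_homotopy}. The only difference is cosmetic---you spell out explicitly (via the triangle inequality and the constant-absorption trick) how ST-equivalence yields the chain with the fixed width $C$, whereas the paper simply asserts that suitable exponents $d_1<d_2<d_3<d_4$ exist.
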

\begin{proof}
It suffices to show that there is $d_0>1$ such that for every $1< d<d_0$, the homotopy groups of 
$ST_{d}(X,C) \setminus \{0\}$ and $ST_{d}(C_0(X),C) \setminus \{0\}$ 
are isomorphic.

Let $A$ be the germ of $C_0(X)$ as $0$ and fix $C > 0$.  
Define
\[
\Omega_X := \{x \in B(0, 1):
\dist(x, X) \le C \|x\|^d, d\in (1, 2]\},
\]
\[
\Omega_A := \{x \in B(0, 1) : 
\dist(x, A) \le C \|x\|^d, d\in (1, 2]\}.
\]
Consider the  continuous  map $\rho_X = \sup\{d\in (1, 2]: \dist(x, X) \leq C \|x\|^d\}$ and  $\rho_A = \sup\{d\in (1, 2]: \dist(x, A) \leq C\|x\|^d\}$ defined on $\Omega_X\setminus X$ and $\Omega_A\setminus A$ respectively.  
Arguing as in the proof of Proposition \ref{prop_homotopy},   we obtain a constant $d_0 > 1$ such that for any $1< d<d'< d_0$ there are strong deformation retracts from  $ST_d(X, C)\setminus\{0\}$ onto $ST_{d'}(X, C)\setminus\{0\}$ and from $ST_d(A, C)\setminus\{0\}$ onto $ST_{d'}(A, C)\setminus\{0\}$.

Since $X$ and $A$ are $ST$-equivalent, we may choose numbers 
\[
1 < d_1 < d_2 < d_3 < d_4 < d_0
\]
such that
\[
ST_{d_4}(X, C) \subset ST_{d_3}(A, C) \subset ST_{d_2}(X, C)
\subset ST_{d_1}(A, C).
\]
After removing the origin, and using the same inclusion argument used in  Proposition~\ref{prop_homotopy}, we conclude that  
the homotopy groups of 
$ST_{d}(X,C) \setminus \{0\}$ and $ST_{d}(A,C) \setminus \{0\}$ for every $1< d<d_0$
are isomorphic.  
This completes the proof.
\end{proof}

\begin{thm}\label{thm_homotopy}
 Let $(X, 0)$ and $(Y,0)$ be closed definable germs in $\mathbb R^n$. 
Then, there exists $\alpha_0\in(0,1)$ such that if $(X,0)$ and $(Y,0)$ are embedded bi-$\alpha$-H\"older equivalent for some $\alpha\in [\alpha_0, 1]$, then the homotopy types of $\Lk(C_0(X),0)$ and $\Lk(C_0(Y),0)$ are the same.
\end{thm}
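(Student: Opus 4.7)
The plan is to prove the theorem by assembling a chain of five homotopy-group isomorphisms linking $\Lk(C_0(X),0)$ to $\Lk(C_0(Y),0)$, with links of sea-tangle neighborhoods of the germs and of their tangent cones serving as intermediate stops. Each step in the chain will be supplied directly by one of the three preceding results, so essentially no new analytic content is needed beyond what has already been developed; the task reduces to making a compatible choice of parameters.

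First I would fix any constant $C>0$. Since $(X,0)$ and $(Y,0)$ are closed definable germs, their tangent cones $C_0(X)$ and $C_0(Y)$ are closed definable cones at $0$ — this is the hypothesis required to invoke Lemma~\ref{lem_homotopy} on them. Next I would extract constants from the three earlier results. Proposition~\ref{prop_homotopy} provides thresholds $\alpha_1\in(0,1)$ and $d_1>1$ such that for every $\alpha\in[\alpha_1,1]$ and every $d\in(1,d_1)$, the links $\Lk(ST_d(X,C),0)$ and $\Lk(ST_d(Y,C),0)$ share the same homotopy type. Proposition~\ref{prop_homotopy2}, applied in turn to $X$ and to $Y$, supplies constants $d_2, d_3>1$ that give the corresponding homotopy equivalence between the sea-tangle links of $X$ (resp.\ $Y$) and those of $C_0(X)$ (resp.\ $C_0(Y)$). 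Setting $d_0:=\min(d_1,d_2,d_3)$, fixing any $d\in(1,d_0)$, and taking $\alpha_0:=\alpha_1$ makes all three results applicable simultaneously for this common $d$.

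With these parameters in place, for every $\alpha\in[\alpha_0,1]$ I would concatenate the following five isomorphisms:
\begin{align*}
\pi_*\bigl(\Lk(C_0(X),0)\bigr)
&\cong \pi_*\bigl(\Lk(ST_d(C_0(X),C),0)\bigr) && (\text{Lemma~\ref{lem_homotopy}}) \\
&\cong \pi_*\bigl(\Lk(ST_d(X,C),0)\bigr) && (\text{Proposition~\ref{prop_homotopy2}}) \\
&\cong \pi_*\bigl(\Lk(ST_d(Y,C),0)\bigr) && (\text{Proposition~\ref{prop_homotopy}}) \\
&\cong \pi_*\bigl(\Lk(ST_d(C_0(Y),C),0)\bigr) && (\text{Proposition~\ref{prop_homotopy2}}) \\
&\cong \pi_*\bigl(\Lk(C_0(Y),0)\bigr) && (\text{Lemma~\ref{lem_homotopy}}).
\end{align*}
Composing the chain delivers the desired isomorphism of all homotopy groups, which under the convention fixed in the introduction is exactly the assertion that $\Lk(C_0(X),0)$ and $\Lk(C_0(Y),0)$ have the same homotopy type.

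The main obstacle is not really an obstacle — it is pure bookkeeping: ensuring that a single degree $d$ and a single threshold $\alpha_0$ serve all three previous results at once, which is handled by taking the minimum of the three threshold degrees and the threshold $\alpha$ coming from Proposition~\ref{prop_homotopy}. The one substantive point that I would want to flag explicitly for the reader is that the tangent cone of a closed definable germ is itself a closed definable cone at $0$, since Lemma~\ref{lem_homotopy} requires this closedness hypothesis; this is a standard fact in o-minimal geometry and does not require any additional argument.
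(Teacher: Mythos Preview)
Your proposal is correct and follows essentially the same approach as the paper: fix $C>0$, choose a common degree $d$ small enough that Propositions~\ref{prop_homotopy} and~\ref{prop_homotopy2} and Lemma~\ref{lem_homotopy} all apply, and then chain the resulting homotopy-group isomorphisms from $\Lk(C_0(X),0)$ through the sea-tangle links to $\Lk(C_0(Y),0)$. The paper's proof merely compresses your five-step chain into two sentences (combining the two applications of Proposition~\ref{prop_homotopy2} with Proposition~\ref{prop_homotopy} into a single assertion about $\Lk(ST_d(C_0(X),C),0)$ and $\Lk(ST_d(C_0(Y),C),0)$), but the content and the bookkeeping are identical.
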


\begin{proof}
Fix a constant $C>0$. By Propositions~\ref{prop_homotopy2} and \ref{prop_homotopy}, there exist 
$d_0>1$ and $\alpha_0 \in (0,1)$ such that if $(X,0)$ and $(Y,0)$ are embedded 
bi-$\alpha$-H\"older equivalent for some $\alpha \in [\alpha_0,1]$, then for every 
$d \in (1,d_0)$ the links
\[
\Lk\!\big(ST_d(C_0(X),C),0\big) 
\quad\text{and}\quad 
\Lk\!\big(ST_d(C_0(Y),C),0\big)
\]
have the same homotopy type.

By Lemma~\ref{lem_homotopy}, the homotopy groups of 
$\Lk(ST_d(C_0(X),C),0)$ and $\Lk(C_0(X),0)$ (resp. 
$\Lk(ST_d(C_0(Y),C),0)$ and $\Lk(C_0(Y),0)$) are isomorphic. 
It follows that $\Lk(C_0(X),0)$ and $\Lk(C_0(Y),0)$ have the same homotopy type.

Therefore, any such $\alpha_0$ satisfies the conclusion of the theorem.
\end{proof}

Combining Theorem \ref{thm_homotopy} and Lemma \ref{lem_extension} we get 
\begin{thm}\label{thm_homotopy2}
    Let $(X, 0) \subset \bb R^n$  and $(Y,0)\subset \bb R^m$ be closed definable germs.  
Then, there exists $\alpha_0\in(0,1)$ such that if $(X,0)$ and $(Y,0)$ are bi-$\alpha$-H\"older equivalent for some $\alpha\in [\alpha_0, 1]$, then the homotopy type of   $\Lk(C_0(X),0)$ and $\Lk(C_0(Y),0)$ are the same.
\end{thm}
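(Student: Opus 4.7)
The theorem weakens the hypothesis of Theorem~\ref{thm_homotopy} from embedded bi-$\alpha$-H\"older equivalence (germs sitting in a common ambient space) to abstract bi-$\alpha$-H\"older equivalence of germs that may live in different $\bb R^n$ and $\bb R^m$. The natural strategy is to upgrade the abstract equivalence to an ambient one by means of Lemma~\ref{lem_extension}, and then appeal to Theorem~\ref{thm_homotopy}. This is exactly parallel to the way Theorem~\ref{thm_dim2} was deduced from Theorem~\ref{thm_dim}.

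Concretely, the plan is as follows. Assume without loss of generality that $n \ge m$ and regard $Y$ as a germ in $\bb R^n$ via the standard isometric inclusion $\bb R^m \hookrightarrow \bb R^n$; this neither changes the tangent cone nor the homotopy type of its link. Given a bi-$\alpha$-H\"older homeomorphism $h : (X,0) \to (Y,0)$, invoke Lemma~\ref{lem_extension} to obtain a bi-$\alpha^2$-H\"older homeomorphism
\[
\Phi : (\bb R^{2n},0) \longrightarrow (\bb R^{2n},0)
\]
satisfying $\Phi(X \times \{0\}) = \{0\} \times Y$. Hence the closed definable germs $(X \times \{0\},0)$ and $(\{0\} \times Y,0)$ in $\bb R^{2n}$ are embedded bi-$\alpha^2$-H\"older equivalent.

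Applying Theorem~\ref{thm_homotopy} to these two germs yields a constant $\alpha_1 \in (0,1)$ such that whenever $\alpha^2 \ge \alpha_1$, the links $\Lk(C_0(X \times \{0\}),0)$ and $\Lk(C_0(\{0\} \times Y),0)$ in $\bb R^{2n}$ share the same homotopy type. A direct check from the definitions shows that $C_0(X \times \{0\}) = C_0(X) \times \{0\}$ and $C_0(\{0\} \times Y) = \{0\} \times C_0(Y)$, and that intersecting with a sphere in $\bb R^{2n}$ gives $\Lk(C_0(X) \times \{0\},0) \cong \Lk(C_0(X),0)$ (and similarly for $Y$), because a sphere of radius $r$ in $\bb R^{2n}$ meets $\bb R^n \times \{0\}$ in a sphere of the same radius in $\bb R^n$. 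Setting $\alpha_0 := \sqrt{\alpha_1}$ then delivers the conclusion. No serious obstacle arises: the argument is a routine packaging step, and the only point requiring care is the loss of H\"older exponent from $\alpha$ to $\alpha^2$ in the extension lemma, which is absorbed by taking a square root at the very end.
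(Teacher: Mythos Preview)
Your proposal is correct and is exactly the approach the paper takes: the paper's proof of Theorem~\ref{thm_homotopy2} is literally the one-line remark ``Combining Theorem~\ref{thm_homotopy} and Lemma~\ref{lem_extension} we get'', and you have faithfully unpacked this by embedding $Y$ in $\bb R^n$, applying Lemma~\ref{lem_extension} to produce an ambient bi-$\alpha^2$-H\"older homeomorphism of $(\bb R^{2n},0)$ sending $X\times\{0\}$ to $\{0\}\times Y$, invoking Theorem~\ref{thm_homotopy}, and absorbing the exponent loss by setting $\alpha_0=\sqrt{\alpha_1}$, just as in the passage from Theorem~\ref{thm_dim} to Theorem~\ref{thm_dim2}.
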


Theorem~\ref{thm_homotopy2} has the following immediate consequence for singular homology (see for example \cite[Proposition 4.21]{Hatcher2002}).
\begin{cor}
Let $(X, 0) \subset \bb R^n$  and $(Y,0)\subset \bb R^m$ be closed definable germs.  
Then, there exists $\alpha_0\in(0,1)$ such that if $(X,0)$ and $(Y,0)$ are bi-$\alpha$-H\"older equivalent for some $\alpha\in [\alpha_0, 1]$, then the singular homology groups of $\Lk(C_0(X),0)$ and $\Lk(C_0(Y),0)$ are isomorphic.
\end{cor}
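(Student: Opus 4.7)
The plan is to reduce the corollary to Proposition~4.21 of \cite{Hatcher2002}, which asserts that a weak homotopy equivalence induces isomorphisms on all singular homology groups. To apply it, I need to upgrade the conclusion of Theorem~\ref{thm_homotopy2}---phrased in the paper as an abstract isomorphism of homotopy groups---to the existence of an actual continuous map, or a zigzag of such maps, that realizes the isomorphism at the space level.

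First I would inspect the proofs of Lemma~\ref{lem_homotopy}, Proposition~\ref{prop_homotopy}, and Proposition~\ref{prop_homotopy2}. Each isomorphism of homotopy groups appearing there is induced by a \emph{concrete} continuous map: either a set-theoretic inclusion of one sea-tangle neighborhood (or cone) into another, a strong deformation retract produced by Hardt's triviality theorem applied to the functions $g$ and $\rho_Z$, or a triangulation-induced retraction onto a compact subpolyhedron such as $S_A$. Concatenating the commutative diagrams used in those arguments (and in the proof of Theorem~\ref{thm_homotopy2} via Lemma~\ref{lem_extension}) yields a zigzag of continuous maps
\[
\Lk(C_0(X),0)\;\longleftarrow\; W_1 \;\longrightarrow\; W_2 \;\longleftarrow\; \cdots \;\longrightarrow\; \Lk(C_0(Y),0),
\]
in which every arrow induces isomorphisms on all homotopy groups, hence is a weak homotopy equivalence.

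Next I would apply Proposition~4.21 of \cite{Hatcher2002} to each arrow of the zigzag, producing an isomorphism on every singular homology group. Composing these isomorphisms yields $H_k(\Lk(C_0(X),0)) \cong H_k(\Lk(C_0(Y),0))$ for every $k \ge 0$, which is exactly the conclusion of the corollary. Since definable germs are triangulable, the links are compact CW complexes, so Whitehead's theorem could alternatively be invoked to collapse the zigzag to a genuine homotopy equivalence; but this extra step is unnecessary for the homology statement.

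The main obstacle is purely one of bookkeeping rather than substance. Because the paper adopts the weak convention in which ``same homotopy type'' means only that the abstract homotopy groups are isomorphic, the literal statement of Theorem~\ref{thm_homotopy2} does not quite suffice---one has to reopen its proof to extract the explicit weak homotopy equivalences. A cleaner alternative would be to strengthen Theorem~\ref{thm_homotopy2} so that it directly asserts the existence of a zigzag of weak homotopy equivalences; the corollary would then be a one-line invocation of Hatcher's result.
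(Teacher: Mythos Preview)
Your proposal is correct and follows the same route as the paper, which simply invokes Theorem~\ref{thm_homotopy2} together with \cite[Proposition~4.21]{Hatcher2002}. In fact, your discussion of the zigzag of inclusions and deformation retracts fills in a detail the paper glosses over: because the paper's convention for ``same homotopy type'' is only abstract isomorphism of homotopy groups, one really does need to go back into the proofs of Lemma~\ref{lem_homotopy} and Propositions~\ref{prop_homotopy}--\ref{prop_homotopy2} to extract actual weak homotopy equivalences before Hatcher's result applies.
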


\section{Applications}
In \cite{Sampaio2025}, Sampaio proved that:

\begin{thm}[{\cite[Theorem 4.1]{Sampaio2025}}]
    Let $(X, 0) \subset \bb C^n$ be complex analytic germ. If   \((X, 0)\) and \((\mathbb{R}^k, 0)\) are bi-\(\alpha\)-H\"older equivalent for every \( 0 < \alpha < 1 \), then \(( X,0) \) is smooth. 
\end{thm}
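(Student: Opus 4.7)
My plan is to combine two of the paper's main results---the LNE rigidity (Theorem \ref{thm_lne}) and the link homotopy rigidity (Theorem \ref{thm_homotopy2})---to force the tangent cone of $(X,0)$ to be a linear subspace, and then appeal to the classical multiplicity-one criterion for smoothness. I would pick $\alpha_0 \in (0,1)$ large enough that both theorems apply simultaneously to the pair $(X,0)$ and $(\mathbb{R}^k,0)$. Since $(\mathbb{R}^k,0)$ is evidently LNE, with tangent cone $\mathbb{R}^k$ and link $S^{k-1}$, these theorems immediately yield that $(X,0)$ is LNE and that $\pi_*(\Lk(C_0(X),0)) \cong \pi_*(S^{k-1})$. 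Because $X$ is complex analytic, $C_0(X)$ is a complex analytic cone; in particular $k = 2d$ is even and $C_0(X)$ has complex dimension $d$.

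The next step would be to promote LNE from $(X,0)$ to $C_0(X)$ via the Fernandes--Sampaio theorem \cite{FS2019}. The $S^1$-action on $C_0(X)$ by complex scaling then restricts to a free action on the link, whose quotient is the projective tangent cone $\mathbb{P}(C_0(X)) \subset \mathbb{CP}^{n-1}$. The Gysin sequence of this $S^1$-bundle, combined with the fact that the total space has the cohomology of $S^{2d-1}$, forces $\mathbb{P}(C_0(X))$ to have the integral cohomology ring of $\mathbb{CP}^{d-1}$. Using the LNE property of the cone (which rules out non-reduced or reducible projectivizations glued along a common point) together with this cohomological information, one deduces $\deg \mathbb{P}(C_0(X)) = 1$, so $\mathbb{P}(C_0(X))$ is a linear $\mathbb{CP}^{d-1}$ and hence $C_0(X) \cong \mathbb{C}^d$ is a linear subspace. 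The classical theorem that a complex analytic germ of multiplicity $1$ is smooth---combined with the fact that the multiplicity of $(X,0)$ equals the degree of its projective tangent cone---then concludes the proof.

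I expect the main obstacle to be the passage from the homotopy-theoretic conclusion to the algebraic statement $\deg \mathbb{P}(C_0(X)) = 1$, since cohomology alone does not distinguish a linear $\mathbb{CP}^{d-1}$ from other complex projective varieties homotopy equivalent to it (e.g., projections of higher-degree embeddings). Ruling out the latter crucially exploits the LNE hypothesis on the tangent cone: for an LNE complex analytic cone, the projectivization cannot multi-sheet over a linear subspace without creating outer-versus-inner metric distortion near the origin. Making this final rigidity step precise, rather than quoting Sampaio's original argument wholesale, is where the real work lies; it is also the reason the introduction emphasizes that both items (i) and (iii) of the main theorems are used in tandem.
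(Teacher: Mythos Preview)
Your two-ingredient strategy—Theorem~\ref{thm_lne} plus Theorem~\ref{thm_homotopy2}—is exactly the paper's, but you mislocate the role of LNE and this leaves a real gap at the end. The step from $\pi_*(\Lk(C_0(X),0))\cong\pi_*(S^{2d-1})$ to ``$C_0(X)$ is a linear subspace'' does \emph{not} need LNE: it is precisely Prill's theorem~\cite{Prill1967}, which the paper simply cites. Your Gysin computation is a rederivation of it, and your worry about higher-degree varieties with the cohomology of $\mathbb{CP}^{d-1}$ is unfounded, because the Euler class of the circle bundle $\Lk(C_0(X))\to\mathbb{P}(C_0(X))$ \emph{is} the restricted hyperplane class $h|_{\mathbb{P}(C_0(X))}$; once the Gysin sequence forces this class to generate the cohomology as a truncated polynomial ring, the degree $\int h^{d-1}$ is automatically~$\pm 1$.

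The genuine gap is your last sentence. The multiplicity of $(X,0)$ is the degree of the \emph{scheme-theoretic} projective tangent cone, while the paper's $C_0(X)$—and hence everything your Gysin argument sees—is set-theoretic. The cusp $\{y^2=x^3\}\subset\mathbb{C}^2$ has set-theoretic tangent cone a line (degree~$1$) but multiplicity~$2$, so ``$C_0(X)$ linear $\Rightarrow$ multiplicity $1$'' fails in general. This is exactly where the paper spends the LNE hypothesis: after obtaining that $C_0(X)$ is linear via Prill, it quotes \cite[Proposition~3.4]{BFLS2026} (alternatively \cite[Theorem~4.1]{FS2023}) for the statement that an LNE complex analytic germ whose tangent cone is a linear subspace must be smooth. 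So LNE is indispensable, but it enters at the final step rather than in the Prill/Gysin step; your promotion of LNE to $C_0(X)$ via \cite{FS2019} is not needed at all.
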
 
In the following we present a simple proof of this theorem in a  stronger form.

\begin{thm}
Let $(X, 0)\subset \bb C^n$ be a complex analytic germ. Then, there exists \( \alpha_0\in (0, 1)\) such that if \((X, 0)\) and $(\bb R^k, 0)$  are bi-\(\alpha\)-H\"older equivalent for some $\alpha \in [\alpha_0, 1]$, then $(X, 0)$ is  smooth.
\end{thm}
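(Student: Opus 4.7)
The strategy is to combine the paper's three main theorems with a classical rigidity result from complex analytic geometry. Take $\alpha_0 \in (0,1)$ to be the maximum of the thresholds furnished by Theorems~\ref{thm_lne}, \ref{thm_dim2}, and \ref{thm_homotopy2} for the pair of germs $(X, 0)$ and $(\mathbb{R}^k, 0)$. Since $(\mathbb{R}^k, 0)$ is trivially LNE, coincides with its own tangent cone, and has link $S^{k-1}$, a bi-$\alpha$-H\"older equivalence with $\alpha \in [\alpha_0, 1]$ immediately transfers to $(X,0)$ the three properties: $(X, 0)$ is LNE, $\dim C_0(X) = k$, and $\Lk(C_0(X), 0)$ has the homotopy type of $S^{k-1}$.

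Next, because $X$ is complex analytic of complex dimension $d$, its real dimension at $0$ is $2d$, and the topological equivalence with $\mathbb{R}^k$ forces $k = 2d$. Consequently $C_0(X)$ is a complex analytic cone of pure complex dimension $d$ whose link is a homotopy $(2d-1)$-sphere. A classical theorem of Prill applies here (the projectivization $\mathbb{P}\, C_0(X) \subset \mathbb{P}^{n-1}$ has the rational cohomology of $\mathbb{P}^{d-1}$ by the Gysin sequence, and the hyperplane class forces degree $1$, so $\mathbb{P}\, C_0(X)$ is a linearly embedded $\mathbb{P}^{d-1}$); the case $d=1$ is trivial. Hence $C_0(X) = L$ for some complex linear subspace $L \cong \mathbb{C}^d$.

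The final step is to conclude that a complex analytic germ which is LNE and whose tangent cone is a linear subspace must be smooth. This is essentially the content of Sampaio's argument in \cite{Sampaio2025}: for a complex analytic germ the multiplicity at $0$ equals the degree of the scheme-theoretic tangent cone, and the LNE hypothesis combined with the support of the tangent cone being a linear subspace of the correct dimension prevents a non-reduced enhancement, so $\mathrm{mult}_0(X) = 1$, which for complex analytic germs is equivalent to smoothness.

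The main obstacle will be giving a clean, self-contained justification of this last implication --- the LNE condition excludes pathologies such as the cusp $y^2=x^3$ (whose set-theoretic tangent cone is already a line) precisely because LNE fails in that example, but pinning this down in general dimension requires either citing or reproducing the relevant rigidity argument. The remaining steps reduce transparently to the three main theorems developed in the paper together with Prill's cone rigidity.
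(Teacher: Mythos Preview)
Your approach is essentially the same as the paper's: homotopy-type invariance of the link of the tangent cone plus Prill's theorem forces $C_0(X)$ to be a linear subspace, LNE invariance forces $(X,0)$ to be LNE, and then one appeals to a known rigidity result to conclude smoothness. Two remarks: the invocation of Theorem~\ref{thm_dim2} is superfluous (neither Prill's theorem nor the final step requires knowing $\dim C_0(X)$ in advance), and the final implication you flag as the main obstacle is handled in the paper simply by citing \cite[Proposition~3.4]{BFLS2026} (see also \cite[Theorem~4.1]{FS2023}), which states precisely that an LNE complex analytic germ whose tangent cone is a linear subspace is smooth---so no ad hoc multiplicity argument is needed.
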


\begin{proof}
By Theorem~\ref{thm_homotopy2}, there exists $\alpha_0 \in (0, 1)$ such that if $(X, 0)$ and $(\mathbb{R}^k, 0)$ are bi-$\alpha$-H\"older equivalent for some $\alpha \geq \alpha_0$, then  $\Lk (C_0(X),0)$ and $\Lk(\mathbb{R}^k, 0)$ share the same homotopy type.  By Prill’s theorem~\cite{Prill1967}, $C_0(X)$ is a linear subspace.  Moreover, since $(\mathbb{R}^k, 0)$ is LNE, by Theorem~\ref{thm_lne}, increasing $\alpha_0$ if necessary, $(X, 0)$ is also LNE. 
Finally, by~\cite[Proposition~3.4]{BFLS2026} (see also \cite[Theorem 4.1]{FS2023}), $(X, 0)$ is smooth .
\end{proof}
\section{Counterexamples in the non-polynomially bounded case}\label{section6}

\begin{example}[{\cite[Example~4]{FEJ2018}}]\rm 
Let $X: = \{x \in \mathbb{R}\}$ and 
\[
Y := \{(x, y) \in \mathbb{R}^2 : x \neq 0,\; y = ||x|\log |x||\} \cup \{(0, 0)\}.
\]
Define the map germ $h \colon (X, 0) \to (Y, 0)$ by 
\[
h(0) := (0,0), \qquad h(x) := (x, ||x|\log (|x|)|) \text{ for } x \neq 0.
\]
Then $h$ is a bi-$\alpha$-H\"older homeomorphism for all $\alpha \in (0, 1)$. 
However,$(X, 0)$ is LNE, while $(Y, 0)$ is not.  
This provides a counterexample to Question~5.
\end{example}

\begin{example}\rm
Let 
$$
X := \{(x, y) \in \mathbb{R}^2\}$$
and $$
Y := \{(x, y, f(x, y)) \in \mathbb{R}^3 : (x, y) \neq (0, 0),\; f(x, y) := |r \log (r)|\text{ where } r = \|(x, y)\| \} \cup \{(0, 0, 0)\}.
$$
Consider the homeomorphism $h \colon (X, 0) \to (Y, 0)$ defined by 
$$
h(0, 0) := (0, 0, 0), \qquad h(x, y) := (x, y, f(x, y)) \text{ for } (x, y) \neq (0, 0).
$$
Then the germ $h$ is bi-$\alpha$-H\"older for all $\alpha \in (0, 1)$. 
However,
\begin{itemize}
    \item $\dim C_0(X) = 2 \neq \dim C_0(Y) = 1$,
    \item $\pi_1(\Lk(C_0(X), 0)) = \bb Z$ while $\pi_1(\Lk( C_0(Y), 0)) = 0$.
\end{itemize}
This provides a counterexample to Questions~1–4.
\end{example}
\bibliographystyle{siam}
\bibliography{Biblio2}

@article {Koike2009,
    AUTHOR = {Koike, Satoshi and Paunescu, Laurentiu},
     TITLE = {The directional dimension of subanalytic sets is invariant
              under bi-{L}ipschitz homeomorphisms},
   JOURNAL = {Ann. Inst. Fourier (Grenoble)},
  FJOURNAL = {Universit\'e{} de Grenoble. Annales de l'Institut Fourier},
    VOLUME = {59},
      YEAR = {2009},
    NUMBER = {6},
     PAGES = {2445--2467},
      ISSN = {0373-0956,1777-5310},
   MRCLASS = {14P15 (32B20)},
  MRNUMBER = {2640926},
MRREVIEWER = {Maciej\ P.\ Denkowski},
       DOI = {10.5802/aif.2496},
       URL = {https://doi.org/10.5802/aif.2496},
}

@article {Koike2013,
    AUTHOR = {Koike, Satoshi and Loi, Ta L\^e{} and Paunescu, Laurentiu and
              Shiota, Masahiro},
     TITLE = {Directional properties of sets definable on o-minimal
              structures},
   JOURNAL = {Ann. Inst. Fourier (Grenoble)},
  FJOURNAL = {Universit\'e{} de Grenoble. Annales de l'Institut Fourier},
    VOLUME = {63},
      YEAR = {2013},
    NUMBER = {5},
     PAGES = {2017--2047},
      ISSN = {0373-0956,1777-5310},
   MRCLASS = {14P15 (03C64 14P10 32B20)},
  MRNUMBER = {3203112},
MRREVIEWER = {Philipp\ Hieronymi},
       DOI = {10.5802/aif.2821},
       URL = {https://doi.org/10.5802/aif.2821},
}

@article {Sampaio2016,
    AUTHOR = {Sampaio, J. Edson},
     TITLE = {Bi-{L}ipschitz homeomorphic subanalytic sets have
              bi-{L}ipschitz homeomorphic tangent cones},
   JOURNAL = {Selecta Math. (N.S.)},
  FJOURNAL = {Selecta Mathematica. New Series},
    VOLUME = {22},
      YEAR = {2016},
    NUMBER = {2},
     PAGES = {553--559},
      ISSN = {1022-1824,1420-9020},
   MRCLASS = {32B20 (14B05)},
  MRNUMBER = {3477329},
MRREVIEWER = {Ta L\^e{} Loi},
       DOI = {10.1007/s00029-015-0195-9},
       URL = {https://doi.org/10.1007/s00029-015-0195-9},
}

@article {FS2019,
    AUTHOR = {Fernandes, Alexandre and Sampaio, J. Edson},
     TITLE = {Tangent cones of {L}ipschitz normally embedded sets are
              {L}ipschitz normally embedded. {A}ppendix by {A}nne {P}ichon
              and {W}alter {D}. {N}eumann},
   JOURNAL = {Int. Math. Res. Not. IMRN},
  FJOURNAL = {International Mathematics Research Notices. IMRN},
      YEAR = {2019},
    NUMBER = {15},
     PAGES = {4880--4897},
      ISSN = {1073-7928,1687-0247},
   MRCLASS = {32B20 (53C23)},
  MRNUMBER = {3988673},
MRREVIEWER = {Aleksandra\ Nowel},
       DOI = {10.1093/imrn/rnx290},
       URL = {https://doi.org/10.1093/imrn/rnx290},
}

@article {FEJ2018,
    AUTHOR = {Fernandes, Alexandre and Sampaio, J. Edson and Silva, Joserlan
              P.},
     TITLE = {H\"older equivalence of complex analytic curve singularities},
   JOURNAL = {Bull. Lond. Math. Soc.},
  FJOURNAL = {Bulletin of the London Mathematical Society},
    VOLUME = {50},
      YEAR = {2018},
    NUMBER = {5},
     PAGES = {874--886},
      ISSN = {0024-6093,1469-2120},
   MRCLASS = {32S50 (14H20)},
  MRNUMBER = {3873500},
MRREVIEWER = {Dmitry\ Kerner},
       DOI = {10.1112/blms.12192},
       URL = {https://doi.org/10.1112/blms.12192},
}

@incollection {Kurdyka1997,
    AUTHOR = {Kurdyka, Krzysztof and Orro, Patrice},
     TITLE = {Distance g\'eod\'esique sur un sous-analytique},
      NOTE = {Real algebraic and analytic geometry (Segovia, 1995)},
   JOURNAL = {Rev. Mat. Univ. Complut. Madrid},
  FJOURNAL = {Revista Matem\'atica de la Universidad Complutense de Madrid},
    VOLUME = {10},
      YEAR = {1997},
     PAGES = {173--182},
      ISSN = {0214-3577},
   MRCLASS = {32B20 (14P15)},
  MRNUMBER = {1485298},
MRREVIEWER = {Wies\l aw\ Paw\l ucki},
}

@article {Prill1967,
    AUTHOR = {Prill, David},
     TITLE = {Cones in complex affine space are topologically singular},
   JOURNAL = {Proc. Amer. Math. Soc.},
  FJOURNAL = {Proceedings of the American Mathematical Society},
    VOLUME = {18},
      YEAR = {1967},
     PAGES = {178--182},
      ISSN = {0002-9939,1088-6826},
   MRCLASS = {32.44},
  MRNUMBER = {206991},
MRREVIEWER = {H.\ R\"ohrl},
       DOI = {10.2307/2035251},
       URL = {https://doi.org/10.2307/2035251},
}

@article {BFLS2026,
    AUTHOR = {Birbrair, L. and Fernandes, A. and L\^e, D. T. and Sampaio, J.
              E.},
     TITLE = {Lipschitz regular complex algebraic sets are smooth},
   JOURNAL = {Proc. Amer. Math. Soc.},
  FJOURNAL = {Proceedings of the American Mathematical Society},
    VOLUME = {144},
      YEAR = {2016},
    NUMBER = {3},
     PAGES = {983--987},
      ISSN = {0002-9939,1088-6826},
   MRCLASS = {14B05 (32B20 32S50)},
  MRNUMBER = {3447652},
MRREVIEWER = {D.\ A.\ Stepanov},
       DOI = {10.1090/proc/12783},
       URL = {https://doi.org/10.1090/proc/12783},
}

@article {Sampaio2025,
    AUTHOR = {Sampaio, J.
              E.},
     TITLE = {Log-Lipschitz regularity and Hölder regularity imply smoothness for complex analytic sets},
   JOURNAL = {J. Eur. Math. Soc. },
  FJOURNAL = {Journal of the European Mathematical Society},
    VOLUME = {},
      YEAR = {2025},
    NUMBER = {},
     PAGES = {},
      ISSN = {},
   MRCLASS = {},
  MRNUMBER = {},
MRREVIEWER = {},
       DOI = {},
       URL = {},
}

@article  {MS2002,
    AUTHOR = {Miller, Chris and Speissegger, Patrick},
     TITLE = {Pfaffian differential equations over exponential o-minimal
              structures},
   JOURNAL = {J. Symbolic Logic},
  FJOURNAL = {The Journal of Symbolic Logic},
    VOLUME = {67},
      YEAR = {2002},
    NUMBER = {1},
     PAGES = {438--448},
      ISSN = {0022-4812,1943-5886},
   MRCLASS = {03C64 (26A12 34A25 34E05)},
  MRNUMBER = {1889560},
MRREVIEWER = {Carlo\ Toffalori},
       DOI = {10.2178/jsl/1190150053},
       URL = {https://doi.org/10.2178/jsl/1190150053},
}

@incollection {Miller1993,
    AUTHOR = {Miller, Chris},
     TITLE = {A growth dichotomy for o-minimal expansions of ordered fields},
 BOOKTITLE = {Logic: from foundations to applications ({S}taffordshire,
              1993)},
    SERIES = {Oxford Sci. Publ.},
     PAGES = {385--399},
 PUBLISHER = {Oxford Univ. Press, New York},
      YEAR = {1996},
      ISBN = {0-19-853862-6},
   MRCLASS = {03C60 (03C50)},
  MRNUMBER = {1428013},
MRREVIEWER = {G.\ Cherlin},
}

@book {Dries,
    AUTHOR = {van den Dries, Lou},
     TITLE = {Tame topology and o-minimal structures},
    SERIES = {London Mathematical Society Lecture Note Series},
    VOLUME = {248},
 PUBLISHER = {Cambridge University Press, Cambridge},
      YEAR = {1998},
     PAGES = {x+180},
      ISBN = {0-521-59838-9},
   MRCLASS = {03-02 (03C50 03C60 14P10 52-02 54-02 55-02 57-02)},
  MRNUMBER = {1633348},
MRREVIEWER = {O. V. Belegradek},
       DOI = {10.1017/CBO9780511525919},
       URL = {https://doi.org/10.1017/CBO9780511525919},
}

@book{Coste,
    author    = "M. Coste",
    title     = "An  introduction  to  o-minimal  geometry",
    year      = "2000",
    publisher = "Dip.  Mat.  Univ.  Pisa,  Dottorato  di  Ricerca  in Matematica",
    address   = "Istituti Editoriali e Poligrafici Internazionali, Pisa"
}

@article {Valette2021,
    AUTHOR = {Valette, Guillaume},
     TITLE = {Poincar\'e{} duality for {$L^p$} cohomology on subanalytic
              singular spaces},
   JOURNAL = {Math. Ann.},
  FJOURNAL = {Mathematische Annalen},
    VOLUME = {380},
      YEAR = {2021},
    NUMBER = {1-2},
     PAGES = {789--823},
      ISSN = {0025-5831,1432-1807},
   MRCLASS = {14F40 (32B20 55N33 57P10 58A10)},
  MRNUMBER = {4263699},
MRREVIEWER = {Ta L\^e{} Loi},
       DOI = {10.1007/s00208-021-02151-4},
       URL = {https://doi.org/10.1007/s00208-021-02151-4},
}

@book{Valette_book,
    author    = "G. Valette",
    title     = "On subanalytic geometry, survey",
    year      = "2023",
  URL = {http://www2.im.uj.edu.pl/gkw/sub.pdf}
}

@incollection {Loi1,
    AUTHOR = {Loi, Ta L\^e},
     TITLE = {Lecture 1: o-minimal structures},
 BOOKTITLE = {The {J}apanese-{A}ustralian {W}orkshop on {R}eal and {C}omplex
              {S}ingularities---{JARCS} {III}},
    SERIES = {Proc. Centre Math. Appl. Austral. Nat. Univ.},
    VOLUME = {43},
     PAGES = {19--30},
 PUBLISHER = {Austral. Nat. Univ., Canberra},
      YEAR = {2010},
      ISBN = {0-7315-5207-5},
   MRCLASS = {32C07 (03C64)},
  MRNUMBER = {2763233},
MRREVIEWER = {Jean-Philippe\ Rolin},
}

@article {McShane1934,
    AUTHOR = {McShane, E. J.},
     TITLE = {Extension of range of functions},
   JOURNAL = {Bull. Amer. Math. Soc.},
  FJOURNAL = {Bulletin of the American Mathematical Society},
    VOLUME = {40},
      YEAR = {1934},
    NUMBER = {12},
     PAGES = {837--842},
      ISSN = {0002-9904},
   MRCLASS = {99-04},
  MRNUMBER = {1562984},
       DOI = {10.1090/S0002-9904-1934-05978-0},
       URL = {https://doi.org/10.1090/S0002-9904-1934-05978-0},
}

@article {Nhan2024,
    AUTHOR = {Nguyen, Nhan},
     TITLE = {Local quasi-isometries and tangent cones of definable germs},
   JOURNAL = {Adv. Geom.},
  FJOURNAL = {Advances in Geometry},
    VOLUME = {24},
      YEAR = {2024},
    NUMBER = {4},
     PAGES = {437--448},
      ISSN = {1615-715X,1615-7168},
   MRCLASS = {14B05 (14P05 30L99 32B20 32C05)},
  MRNUMBER = {4813073},
MRREVIEWER = {Jan\ Stevens},
       DOI = {10.1515/advgeom-2024-0020},
       URL = {https://doi.org/10.1515/advgeom-2024-0020},
}

@article {FSSS2024,
    AUTHOR = {Fernandes, Alexandre and Fernandes, Filipe and Sampaio,
              Jos\'e{} Edson and da Silva, Joserlan Perote},
     TITLE = {Bi-{H}\"older equivalence of real analytic functions},
   JOURNAL = {Res. Math. Sci.},
  FJOURNAL = {Research in the Mathematical Sciences},
    VOLUME = {11},
      YEAR = {2024},
    NUMBER = {2},
     PAGES = {Paper No. 20, 6},
      ISSN = {2522-0144,2197-9847},
   MRCLASS = {14B05 (32S50)},
  MRNUMBER = {4714914},
MRREVIEWER = {Eduardo\ S.\ Zeron},
       DOI = {10.1007/s40687-024-00429-y},
       URL = {https://doi.org/10.1007/s40687-024-00429-y},
}

@article {DM1994,
    AUTHOR = {van den Dries, Lou and Miller, Chris},
     TITLE = {On the real exponential field with restricted analytic
              functions},
   JOURNAL = {Israel J. Math.},
  FJOURNAL = {Israel Journal of Mathematics},
    VOLUME = {85},
      YEAR = {1994},
    NUMBER = {1-3},
     PAGES = {19--56},
      ISSN = {0021-2172,1565-8511},
   MRCLASS = {03C10 (03C62 12J15)},
  MRNUMBER = {1264338},
MRREVIEWER = {A.\ J.\ Wilkie},
       DOI = {10.1007/BF02758635},
       URL = {https://doi.org/10.1007/BF02758635},
}

@article {FS2023,
    AUTHOR = {Fernandes, Alexandre and Sampaio, Jos\'e{} Edson},
     TITLE = {On characterization of smoothness of complex analytic sets},
   JOURNAL = {Indiana Univ. Math. J.},
  FJOURNAL = {Indiana University Mathematics Journal},
    VOLUME = {72},
      YEAR = {2023},
    NUMBER = {6},
     PAGES = {2547--2565},
      ISSN = {0022-2518,1943-5258},
   MRCLASS = {32S50 (14B05 32B20 32C15)},
  MRNUMBER = {4684311},
MRREVIEWER = {Dmitry\ Kerner},
}

@article {Wilkie96,
    AUTHOR = {Wilkie, A. J.},
     TITLE = {Model completeness results for expansions of the ordered field
              of real numbers by restricted {P}faffian functions and the
              exponential function},
   JOURNAL = {J. Amer. Math. Soc.},
  FJOURNAL = {Journal of the American Mathematical Society},
    VOLUME = {9},
      YEAR = {1996},
    NUMBER = {4},
     PAGES = {1051--1094},
      ISSN = {0894-0347,1088-6834},
   MRCLASS = {03C62 (03C60 03C65 14P15)},
  MRNUMBER = {1398816},
MRREVIEWER = {Luc\ B\'elair},
       DOI = {10.1090/S0894-0347-96-00216-0},
       URL = {https://doi.org/10.1090/S0894-0347-96-00216-0},
}

@book {Hatcher2002,
    AUTHOR = {Hatcher, Allen},
     TITLE = {Algebraic topology},
 PUBLISHER = {Cambridge University Press, Cambridge},
      YEAR = {2002},
     PAGES = {xii+544},
      ISBN = {0-521-79160-X; 0-521-79540-0},
   MRCLASS = {55-01 (55-00)},
  MRNUMBER = {1867354},
MRREVIEWER = {Donald\ W.\ Kahn},
}

@article {Dries-Miller96,
    AUTHOR = {van den Dries, Lou and Miller, Chris},
     TITLE = {Geometric categories and o-minimal structures},
   JOURNAL = {Duke Math. J.},
  FJOURNAL = {Duke Mathematical Journal},
    VOLUME = {84},
      YEAR = {1996},
    NUMBER = {2},
     PAGES = {497--540},
      ISSN = {0012-7094,1547-7398},
   MRCLASS = {32B20 (03C68)},
  MRNUMBER = {1404337},
MRREVIEWER = {Anand\ Pillay},
       DOI = {10.1215/S0012-7094-96-08416-1},
       URL = {https://doi.org/10.1215/S0012-7094-96-08416-1},
}

@article {Weyl39,
    AUTHOR = {Weyl, Hermann},
     TITLE = {On the {V}olume of {T}ubes},
   JOURNAL = {Amer. J. Math.},
  FJOURNAL = {American Journal of Mathematics},
    VOLUME = {61},
      YEAR = {1939},
    NUMBER = {2},
     PAGES = {461--472},
      ISSN = {0002-9327,1080-6377},
   MRCLASS = {99-04},
  MRNUMBER = {1507388},
       DOI = {10.2307/2371513},
       URL = {https://doi.org/10.2307/2371513},
}

\end{document}